\documentclass[12pt,reqno]{amsart}
\usepackage{graphicx}
\usepackage{color}
\usepackage{amsmath,amssymb,amsthm,amsfonts}
\usepackage{graphicx}
\usepackage{color}
\usepackage{multicol}

\usepackage{mathptmx}
\usepackage{enumerate}
\usepackage[numbers,sort&compress]{natbib}
\topmargin=-0.1in \oddsidemargin3mm \evensidemargin3mm
\textheight245mm \textwidth160mm


\numberwithin{equation}{section}
\newcommand{\R}{{\mathbb R}}

\newcommand{\be}{\begin{equation}}
\newcommand{\ee}{\end{equation}}

\newcommand{\pt}{\partial}

\newcommand{\ben}{\begin{eqnarray*}}
\newcommand{\enn}{\end{eqnarray*}}

\newcommand{\Om}{\Omega}

\newcommand{\va}{\varepsilon}

\newcommand{\lge}{\langle}
\newcommand{\rge}{\rangle}

\newcommand{\na}{\nabla}
\newcommand{\ol}{\overline}

%
\newtheorem{theorem}{\textbf Theorem}[section]
\newtheorem{lemma}{\textbf Lemma}[section]
\newtheorem{rem}{\textbf Remark}[section]

\newtheorem{prop}{\textbf Proposition}[section]

\def\endProof{{\hfill$\Box$}}
%
%
%
%

%

\begin{document}

\author{Qianqian Hou}
\address{Institute for Advanced Study in Mathematics, Harbin Institute of Technology, Harbin 150001, People's Republic of China}
\email{qianqian.hou@hit.edu.cn}

\title[Boundary layer effects induced by the fluid in a chemotaxis-Navier-Stokes system]{Boundary layer effects induced by the fluid in a chemotaxis-Navier-Stokes system}

\begin{abstract}
This paper is concerned with the boundary layer problem on a chemotaxis-Navier-Stokes system modelling the boundary layer formation of aerobic bacteria in fluids. Completing this system with Neumann boundary conditions on oxygen, we show that gradients of its second solution component in the half plane of $\mathbb{R}^2$ possess boundary layer effects as the oxygen diffusion rate goes to zero. However, neglecting the influence of the fluid, gradients of solutions to the chemotaxis-only subsystem no longer present such boundary layer effects. It seems that the boundary layer effect for the chemotaxis-Navier-Stokes system under Neumann boundary conditions on oxygen is induced by the presence of fluids.
\end{abstract}

\subjclass[2010]{35A01, 35B40, 35K57, 35Q92, 92C17}

\keywords{Boundary layers, chemotaxis, Navier-Stokes equations, asymptotic analysis, vanishing diffusion limit}
\maketitle

\section{Introduction}
\subsection{Background and literature review} Oxytactic bacteria living in water like \emph{Bacillus subtilis} swim up along the oxygen gradients and quickly aggregate in a relatively thin layer below the water surface (cf. \cite{hillesdon-pedley1996, hillesdon-pedley-kessler1995}). The following chemotaxis-Navier-Stokes system has been proposed in \cite{Tuval} to describe the interplay of the bacteria, oxygen and fluids in this process: 
\begin{equation}\label{e00}
\left\{
\begin{aligned}
&m_t+\vec{u}\cdot\na m+\nabla\cdot(m \chi(c)\na c)=D_m \Delta m, \quad \quad\ \ \  \,&(\vec{x},t)\in \Omega\times(0,\infty),
\\
&c_t+\vec{u}\cdot \na c+m f(c)=\va\Delta c,\qquad \quad\qquad\ \ \qquad \ \,&(\vec{x},t)\in \Omega\times(0,\infty),\\
&\vec{u}_t+ \kappa\vec{u}\cdot\na \vec{u}+\nabla p+m\nabla \phi=D\Delta\vec{u},\quad \quad \quad \ \  \ \ \quad &(\vec{x},t)\in \Omega\times(0,\infty),\\
&\na\cdot\vec{u}=0,\qquad\qquad\qquad\quad \quad\qquad\quad\qquad\ \ \ \ \ \   \, \  &(\vec{x},t)\in \Omega\times(0,\infty),\\
\end{aligned}
\right.
\end{equation}
 where $\Om\subset \R^d$ with $d\geq 1$. The unknowns $m(\vec{x},t)$, $c(\vec{x},t)$, $\vec{u}(\vec{x},t)$ and $p(\vec{x},t)$ are the bacteria density, oxygen concentration, fluid velocity and the associated pressure. The positive constants $D_m$, $\va$ and $D$ denote diffusion rates of the bacterial cells, oxygen and velocity, respectively. The first two equations in \eqref{e00} comprise the Keller-Segel model describing the chemotactic movement of bacteria due to the uneven distributions of the oxygen in the fluids with chemotactic intensity $\chi(c)>0$ and oxygen consumption rate $f(c)>0$, where both bacteria and oxygen diffuse and are convected with the fluid. The last two equations in \eqref{e00} are the well-known incompressible Navier-Stokes equations with the additional term $m\nabla \phi$ accounting for the gravity force exerted on the fluids by the bacteria cells, where the given potential $\phi(\vec{x})$ is independent of the temporal variable $t$.
 
When $\Om=\R^2$, under certain structural conditions on $\chi$ and $f$, global weak solutions on system \eqref{e00} with $\kappa=0$ (the chemotaxis-Stokes system) and with $\kappa=1$ (the chemotaxis-Navier-Stokes system) were derived in \cite{duan-lorz-markowich2010} and \cite{liu-lorz2011}, respectively. Such weak solutions were later proved to be unique in \cite{zhang-zheng2014} by taking advantage of a coupling structure of the equations and using the Fourier localization technique. By demonstrating some blow-up criteria for classical solutions of the chemotaxis-Navier-Stokes system, Chae-Kang-Li showed that the global weak solutions derived in \cite{liu-lorz2011} is indeed a classical one upon improving the regularity of initial data (cf. \cite{chae-kang-li2012,chae-kang-li2014}). Relaxing the structural constraints on $\chi$ and $f$, global well-posedness on classical solutions were established in  \cite{chae-kang-li2014} under a smallness assumption on $\|c_0\|_{L^\infty}$ and in \cite{li-li2016} under some technique conditions on $\phi$. Comparing with the two-dimensional case, the problem of well-posedness in the case $\Om=\R^3$ seems to be more delicate, where the results available so far are merely confined to local and global small classical solutions for the chemotaxis-Navier-Stokes system, and global weak solutions on the chemotaxis-Stokes system (cf. \cite{chae-kang-li2012,chae-kang-li2014,
duan-lorz-markowich2010}).

In the case that $\Om$ is a bounded domain in $\R^d$, $d=2,3$ with smooth boundary, the study on well-posedness of \eqref{e00} subject to
 the following boundary conditions
\be\label{bc}
(D_m\na m-\chi(c)\na c)\cdot \vec{n}=0,\qquad \na c\cdot \vec{n}=0,\qquad \vec{u}=\mathbf{0},
\ee
with $\vec{n}$ the outward unit normal to the boundary $\partial \Om$,  was started by Lorz in \cite{lorz2010}, where local weak solutions were constructed in the situation $\chi$ being a constant and $f$ being monotonically increasing with $f(0)=0$. Under the structural hypotheses
 $(\frac{f(s)}{\chi(s)})^{'}>0$, $(\frac{f(s)}{\chi(s)})^{''}\leq 0$ and $(\chi(s)f(s))^{'}\geq 0$, Winkler established global existence of weak solutions in the 3D case for the chemotaxis-Stokes system and of smooth solutions in the 2D case for the chemotaxis-Navier-Stokes system (cf. \cite{winkler2012}). Those smooth solutions in the latter 2D case approach exponentially to the spatially homogeneous steady state $(\bar{m}_0,0,\mathbf{0})$ in the large time limit, where $\bar{m}_0=\frac{1}{|\Om|}\int_\Om m(x,0)dx$ (cf. \cite{winkler2014arma, zhang-li2015}). Global weak solutions for the three-dimensional chemotaxis-Navier-Stokes system were obtained in \cite{winkler2016} under the same structural requirements on $\chi$ and $f$ proposed in \cite{winkler2012}. Such weak solutions enjoy eventual smoothness and stabilize to the spatially uniform equilibria $(\bar{m}_0,0,\mathbf{0})$ as $t$ goes to infinity (cf. \cite{winkler2017}). 
 
 Besides the Neumann boundary conditions exhibited in \eqref{bc}, Dirichlet/Robin boundary conditions on oxygen have been imposed for system \eqref{e00} and the study on its well-posedness with such boundary conditions have been conducted in \cite{braukhoff-lankeit2019, braukhoff2017, wang-xiang2021, wx2022, wang-xiang2024}. Replacing the linear cell diffusion in \eqref{e00} with the nonlinear diffusion $\Delta m^\alpha$, $(\alpha>1)$, one derives the chemotaxis-Navier-Stokes driven by porous medium diffusion. On well-posedness of such systems we refer the reader to \cite{tao-winkler2013, jin2021, jin2024, tian-xiang2023, zheng-ke2022, winkler2018} and the reference therein.  
 \subsection{Goals and motivations} 
 We emphasize that one of the most significant findings in the experiment conducted by Tuval et al. (cf. \cite{Tuval}) is the boundary layer formation of bacterial cells under the water surface and extensive studies on the boundary layer problem of various chemotaxis systems have been developed to uncover the underlying mechanism of this boundary layer formation. However, the boundary layer problem on the coupled chemotaxis-fluid system \eqref{e00} is lack of investigations. 
  The goal of the present paper is to make progress on this issue. Specifically, we investigate the boundary layer problem of \eqref{e00}-\eqref{bc} in the half plane  
 $\mathbb{R}^2_{+}=\{(x,y)\in\mathbb{R}^2:\,y>0\}$. In line with the experiment in \cite{Tuval}, we set $\chi(c)=1$, $f(c)=c$ and $\na \phi=(0,\lambda)$ with the gravity constant $\lambda$. The constants $D_m$ and $D$ are chosen to be $1$ without loss of generality. Then system \eqref{e00}-\eqref{bc} reads as: 
\begin{equation}\label{e1}
\left\{
\begin{array}{lll}
m_t+\vec{u}\cdot\na m+\nabla\cdot(m \na c)= \Delta m, \quad \quad\ \ \ & (x,y,t)\in \mathbb{R}^2_{+}\times(0,T),
\\
c_t+\vec{u}\cdot \na c+m c=\va\Delta c,\qquad \quad\qquad\ \ \, \ &(x,y,t)\in \mathbb{R}^2_{+}\times(0,T),\\
\vec{u}_t+ \vec{u}\cdot\na \vec{u}+\nabla p+m(0,\lambda)=\Delta\vec{u},\quad \ \ \ \ \quad & (x,y,t)\in \mathbb{R}^2_{+}\times(0,T),\\
\na\cdot\vec{u}=0,\qquad\qquad\qquad\quad \quad\qquad\ \ \ \  \, \  &(x,y,t)\in \mathbb{R}^2_{+}\times(0,T),\\
(m,c,\vec{u})(x,y,0)=(m_0,c_0,\vec{u}_0)(x,y), \quad &(x,y)\in \mathbb{R}^2_{+}
\end{array}
\right.
\end{equation}
and
\be\label{e2}
\left\{
\begin{array}{lll}
(\na m-m\na c)\cdot\vec{n}=0,\quad \na c\cdot\vec{n}=0, \quad \vec{u}=\mathbf{0}& \ \mathrm{if}\ \ \va>0,\\
(\na m-m\na c)\cdot\vec{n}=0,\quad \vec{u}=\mathbf{0}& \ \mathrm{if}\ \ \va=0,\\
\end{array}
\right.
\ee
where the initial data is imposed and no boundary condition is prescribed for $c$ in the case of $\va=0$, since its boundary value is intrinsically determined by the second equation in \eqref{e1}. From the boundary layer theory (cf. \cite{P}), we know that the inconsistent boundary conditions on $c$ between $\va>0$ and $\va=0$ in \eqref{e2} may induce to a thin layer near the boundary for small $\va>0$, in which the solution component $c$ changes rapidly and to study this boundary layer effect it suffices to investigate the vanishing oxygen diffusion limit issue on \eqref{e1}-\eqref{e2}. 

At the end of this section, we briefly recall the previous results on boundary layer problem of chemotaxis systems. The author and her collaborators showed that $\na c$, gradients of the second solution component to a chemotaxis system with logarithmic sensitivity possesses boundary layer effects in both one-dimensional and two-dimensional cases, under the circumstance that the bacterial cell and the oxygen concentration subject to Dirichlet and Neumann boundary conditions, respectively (cf. \cite{HWZ,HLWW,hou2019convergence}). Results in \cite{HWZ} were extended to the case with time-dependent boundary data (cf. \cite{peng-wang-zhao-zhu2018}). For the same chemotaxis system with no-flux boundary conditions on bacteria and Dirichlet boundary conditions on oxygen, Carrillo-Li-Wang derived the unique stationary boundary spike-layer steady state in the one-dimensional case and justified the asymptotically nonlinear stability of this steady state as $t$ goes to infinity (cf. \cite{carrillo-li-wang2021}). For the chemotaxis system with linear sensitivity, i.e. $\chi(c)=1$, stationary boundary layer solutions under Dirichlet boundary conditions on oxygen in arbitrary bounded domain of $\mathbb{R}^d$ have been constructed in \cite{lee-wang-yang2020} and the results were later extended to the solutions evolved with time in one-dimensional case(cf. \cite{carrillo-hong-wang2024}). Gradients of radially symmetric solutions under robin boundary conditions still possess boundary layer effects (cf. \cite{hou2024}). 

\section{Notation and main results}
\textbf{Notations.}
 \begin{itemize}
\item Without loss of generality, we assume $0\leq\va<1$ since we are concerned with the diffusion limit
     as $\va\rightarrow 0$. We denote by $C$ a generic constant that is independent of $\va$ but depending on $T$.
\item $\mathbb{N}_{+}$ represents the set of positive integers and $\mathbb{N}=\mathbb{N}_{+}\cup \{0\}$. For $z\in (0,\infty)$, we denote $\lge z\rge=\sqrt{z^2+1}$.
\item With $1\leq p\leq \infty$, we use $L^p_{xy}$ and $L^p_{xz}$ to denote the Lebesgue spaces $L^{p}(\mathbb{R}\times \mathbb{R}_{+})$ with respect to $(x,y)$ and $(x,z)$, respectively, with corresponding norms $\|\cdot\|_{L^p_{xy}}$ and $\|\cdot\|_{L^p_{xz}}$.
\item Similarly,
$H^k_{xy}$ and $H^k_{xz}$ for $k\in \mathbb{N}$ represent the Sobolev spaces $W^{k,2}(\mathbb{R}\times \mathbb{R}_+)$
with respect to $(x,y)$ and $(x,z)$ respectively, with corresponding norms $\|\cdot\|_{H^k_{xy}}$ and $\|\cdot\|_{H^k_{xz}}$. Without confusion, we still use $H^k_{xy}$ and $L^p_{xy}$ to denote the two-dimensional vector spaces $(H^k_{xy})^2$ and $(L^p_{xy})^2$.
 \item For $k,m\in \mathbb{N}$, we introduce the anisotropic Sobolev space
$$H^k_xH^{m}_z:=\bigg\{f(x,z)\in L^2(\mathbb{R}\times \mathbb{R}_+)\,|\,\underset{ 0\leq l_1\leq k,\,0\leq l_2\leq m
     }{\sum}\|\partial^{l_1}_x\partial^{l_2}_z f(x,z)\|_{L^2_{xz}}<\infty\bigg\}$$
 with norm  $\|\cdot\|_{H^k_xH^m_z}$. Similarly  $H^k_xH^m_y$ will be used if the dependent variable of $f$ is $(x,y)\in \mathbb{R}\times \mathbb{R}_{+}$.
\item For simplicity, we use $\|\cdot\|_{L^q_T \boldsymbol{X}}$ $(1\leq q\leq \infty)$ to denote $\|\cdot\|_{L^q(0,T; \boldsymbol{X})}$ for Banach space $\boldsymbol{X}$.
\item For a function $f(x,y,t)\in C([0,T];H^1_{xy})$ with $(x,y,t)\in \mathbb{R}^2_{+}\times [0,T]$ and $T>0$, we denote $\ol{f}=f(x,0,t)$.
\end{itemize}

\subsection{Equations for boundary and outer layer profiles} Denote by $(m^\va,c^\va,\vec{u}^{\,\va},p^\va)$ the solutions of \eqref{e1}-\eqref{e2} with $\va>0$. To prove our main results, it is required to construct approximated solutions for $(m^\va,c^\va,\vec{u}^{\,\va},p^\va)$ with small $\va>0$. To this end, we employ a formal asymptotic analysis by assuming that $(m^\va,c^\va,\vec{u}^{\,\va},p^\va)$
possesses the following asymptotic expansions with respect to $\va$ for $j\in \mathbb{N}$:
\be\label{b2}
 \begin{split}
 m^{\va}(x,y,t)&=\sum_{j= 0}^\infty\va^{j/2}
 \left[ m^{I,j}(x,y,t)+m^{B,j}(x,z,t)
 \right],\\
 c^{\va}(x,y,t)&=\sum_{j= 0}^\infty\va^{j/2}
 \left[ c^{I,j}(x,y,t)+c^{B,j}(x,z,t)
 \right],\\
 \vec{u}^{\,\va}(x,y,t)&=\sum_{j=0}^\infty\va^{j/2}
 \left[ \vec{u}^{\,I,j}(x,y,t)+\vec{u}^{\,B,j}(x,z,t)
 \right],\\
 p^{\va}(x,y,t)&=\sum_{j= 0}^\infty\va^{j/2}
 \left[ p^{I,j}(x,y,t)+p^{B,j}(x,z,t)
 \right],
 \end{split}
 \ee
where $(x,y,t)\in \mathbb{R}^2_{+}\times (0,\infty)$ and the boundary layer coordinate is defined as:
\be\label{b1}
 z=\frac{y}{\va^{1/2}}, \quad  y \in (0,\infty).
\ee
Each term in \eqref{b2} is assumed to be smooth and the boundary layer profiles $(m^{B,j},c^{B,j},\vec{u}^{\,B,j},p^{B,j})$ enjoy the following basic hypothesis (cf. \cite[Chapter 4]{Holmes2012}, \cite{GG}, \cite{Rousset2005}):
\begin{quote}
\begin{enumerate}[{\bf H}:]
\item[(H)]
$m^{B,j}$, $c^{B,j}$, $\vec{u}^{\,B,j}$ and $p^{B,j}$ decay to zero exponentially as $z\rightarrow \infty$.
 \end{enumerate}
\end{quote}
 In order to obtain the initial-boundary value problems for outer layer profiles $(m^{I,j},c^{I,j},\vec{u}^{\,I,j},p^{I,j})$ and boundary layer profiles $(m^{B,j},c^{B,j},\vec{u}^{\,B,j},p^{B,j})$ with $j\geq 0$, the analysis will be split into five steps. In the first step, we deduce initial and boundary values for the outer and boundary layer profiles by inserting \eqref{b2} into the initial and boundary conditions in \eqref{e1}-\eqref{e2}. Equations on these layer profiles are derived in Step 2-Step 4 by substitutions of \eqref{b2} into each equation in \eqref{e1}. Collecting the results obtained in Step 1- Step 4, we obtain the following initial-boundary value problems in \eqref{e3}- \eqref{e15} and the definition of $\xi$ in \eqref{e18}. Detailed derivations on \eqref{e3}-\eqref{e18} are given in appendix. The leading-order outer layer profiles $(m^{I,0},c^{I,0},\vec{u}^{\,I,0},p^{I,0})(x,y,t)$ satisfy the following initial-boundary value problem:

\be\label{e3}
\left\{
\begin{array}{lll}
m^{I,0}_t+\vec{u}^{\,I,0}\cdot \na m^{I,0}+\na\cdot(m^{I,0}\na c^{I,0})=\Delta m^{I,0},\qquad (x,y,t)\in \mathbb{R}^2_{+}\times(0,\infty),\\
c^{I,0}_t+\vec{u}^{\,I,0}\cdot \na c^{I,0}+m^{I,0}c^{I,0}=0,\\
\vec{u}^{\,I,0}_t+\vec{u}^{\,I,0}\cdot \na \vec{u}^{\,I,0}+\na p^{I,0}+m^{I,0}(0,\lambda)
=\Delta \vec{u}^{\,I,0},\\
\na\cdot \vec{u}^{\,I,0}=0,\\
(m^{I,0},c^{I,0},\vec{u}^{\,I,0})(x,y,0)=(m_0,c_0,\vec{u}_0)(x,y),\\
(\pt_y m^{I,0}-m^{I,0}\pt_y c^{I,0})(x,0,t)=0,\quad \vec{u}^{\,I,0}(x,0,t)=\mathbf{0}.
\end{array}
\right.
\ee
Denote $(m^{0},c^{0},\vec{u}^{\,0},p^{0})(x,y,t)$ as the solution of \eqref{e1}-\eqref{e2} with $\va=0$. Then from the uniqueness of solutions, we deduce that 
\ben
(m^{0},c^{0},\vec{u}^{\,0},p^{0})(x,y,t)=(m^{I,0},c^{I,0},\vec{u}^{\,I,0},p^{I,0})(x,y,t).
\enn
The leading-order boundary layer profiles $(m^{B,0},c^{B,0},\vec{u}^{\,B,0},p^{B,0})(x,z,t)$ with $(x,z,t)\in \mathbb{R}^2_{+}\times (0,\infty)$ satisfy:
\be\label{e4}
m^{B,0}(x,z,t)=c^{B,0}(x,z,t)=p^{B,0}(x,z,t)=0,\quad \vec{u}^{\,B,0}(x,z,t)=\mathbf{0}.
\ee
The first-order outer layer profiles $(m^{I,1},c^{I,1},\vec{u}^{\,I,1},p^{I,1})(x,y,t)$ solve:
\be\label{e5}
\left\{
\begin{array}{lll}
m^{I,1}_t+\vec{u}^{\,0}\cdot \na m^{I,1}+\vec{u}^{\,I,1}\cdot \na m^{0}+\na\cdot(m^{0}\na c^{I,1}+m^{I,1}\na c^{0})
=\Delta m^{I,1},\\
c^{I,1}_t+\vec{u}^{\,0}\cdot \na c^{I,1}+\vec{u}^{\,I,1}\cdot \na c^{0}+m^{0}c^{I,1}
+m^{I,1}c^{0}=0,\\
\vec{u}^{\,I,1}_t+\vec{u}^{\,0}\cdot \na \vec{u}^{\,I,1}
+\vec{u}^{\,I,1}\cdot \na \vec{u}^{\,0}+\na p^{I,1}+m^{I,1}(0,\lambda)
=\Delta \vec{u}^{\,I,1},\\
\na\cdot \vec{u}^{\,I,1}=0,\\
(m^{I,1},c^{I,1},\vec{u}^{\,I,1})(x,y,0)=(0,0,\mathbf{0}),\\
(\pt_y m^{I,1}-m^{I,1}\pt_y c^{0}-m^{0}\pt_y c^{I,1})(x,0,t)=0,\quad \vec{u}^{\,I,1}(x,0,t)=\mathbf{0},
\end{array}
\right.
\ee
which, gives rise to
\be\label{e6}
(m^{I,1},c^{I,1},\vec{u}^{\,I,1},p^{I,1})(x,y,t)=(0,0,\mathbf{0},0)
\ee
thanks to the uniqueness of solutions.
The first-order boundary layer profiles fulfill:
\be\label{e9}
\vec{u}^{\,B,1}(x,z,t)=\mathbf{0},\qquad p^{B,1}(x,z,t)=0
\ee
and
\be\label{e7}
\left\{
\begin{array}{lll}
c^{B,1}_t+z \ol{\partial_y u^{0}_2} \partial_z c^{B,1}+\ol{m^{0}}[\ol{c^{0}}+1]c^{B,1}
=\partial_z^2 c^{B,1},\qquad (x,z,t)\in \mathbb{R}^2_{+}\times (0,\infty),\\
c^{B,1}(x,z,0)=0,\\
\partial_z c^{B,1}(x,0,t)=-\ol{\partial_y c^{0}}
\end{array}
\right.
\ee
and
\be\label{e8}
m^{B,1}(x,z,t)=\ol{m^{0}}c^{B,1}(x,z,t).
\ee
The second-order boundary layer profiles satisfy:
\be\label{e11}
\vec{u}^{\,B,2}(x,z,t)=\mathbf{0},\qquad p^{B,2}(x,z,t)=\lambda \int_z^\infty m^{B,1}(x,\eta,t)d\eta.
\ee
and
\be\label{e12}
\left\{
\begin{array}{lll}
c^{B,2}_t\!+\!z \ol{\partial_y u^{0}_2}\partial_z c^{B,2}
 \!+\!\ol{m^{0}}[\ol{c^{0}}+1]c^{B,2}
=\partial^2_z c^{B,2}\!+\!\Gamma(x,z,t)\!-\!\ol{c^{0}}\Theta(x,z,t),\\
c^{B,2}(x,z,0)=0,\\
\partial_z c^{B,2}(x,0,t)=0
\end{array}
\right.
\ee
and
\be\label{e13}
\begin{split}
m^{B,2}(x,z,t)=\ol{m^{0}}c^{B,2}(x,z,t)
-\Theta(x,z,t)
\end{split}
\ee
with
\be\label{e14}
\begin{split}
\Gamma(x,z,t):=&-
z\ol{\partial_y u^{0}_1}\partial_x c^{B,1}-\frac{z^2}{2}\ol{\partial^2_y u^{0}_2}\partial_z c^{B,1}
-z \ol{\partial_y m^{0}}c^{B,1}
-z m^{B,1}\ol{\partial_y c^{0}}
-m^{B,1}c^{B,1}
\end{split}
\ee
and
\be\label{e15}
\begin{split}
\Theta(x,z,t):=&\ol{\partial_y c^{0}}\int_z^\infty m^{B,1}(x,\eta,t)d\eta
+\int_z^\infty (m^{B,1}\partial_\eta c^{B,1})(x,\eta,t)d\eta\\
&+\ol{\partial_y m^{0}}\int_z^\infty \eta \partial_\eta c^{B,1}(x,\eta,t)d\eta.
\end{split}
\ee
Denote
\be\label{e18}
\begin{split}
\xi(x,z,t)=&-\int_z^\infty \eta \ol{\partial_y m^{0}}\,\partial_\eta c^{B,2}(x,\eta,t)d\eta
-\int_z^\infty \ol{\partial_y c^{0}}\, m^{B,2}(x,\eta,t)d\eta\\
&-\int_z^\infty \frac{\eta^2}{2} \ol{\partial_y^2 m^{0}}\,\partial_\eta c^{B,1}(x,\eta,t)d\eta
-\int_z^\infty \int_\eta^\infty \partial^2_x m^{B,1}(x,s,t)ds\, d\eta\\
&+\int_z^\infty \int_\eta^\infty m^{B,1}_t(x,s,t)ds \,d\eta
+\int_z^\infty \int_\eta^\infty s \ol{\partial_y u^{0}_2}\,\partial_s m^{B,1}(x,s,t)ds \,d\eta\\
&+\int_z^\infty \int_\eta^\infty \partial_x[\ol{m^{0}}\partial_x c^{B,1}(x,s,t)
+m^{B,1}(x,s,t)\ol{\partial_x c^{0}}
]ds \,d\eta\\
&+\int_z^\infty \int_\eta^\infty [m^{B,1}(x,s,t)+s\partial_s m^{B,1}(x,s,t)]
\ol{\partial^2_y c^{0}}ds\, d\eta\\
&-\int_z^\infty (m^{B,1}\partial_\eta c^{B,2}+m^{B,2}\partial_\eta c^{B,1})(x,\eta,t)d\eta\\
&+\int_z^\infty \int_\eta^\infty s\partial_s c^{B,1}(x,s,t)\,
\ol{\partial^2_y m^{0}}ds \,d\eta.
\end{split}
\ee

\subsection{Main results}
To attain the regularity for the solutions presented in Proposition \ref{p2}, it is natural to impose on initial data the following compatibility conditions:
\begin{equation*}
(A)\left\{
\begin{array}{lll}
0=\partial_y c_{0}(x,0)=\partial_y m_{0}(x,0),\\
\mathbf{0}=[\Delta \vec{u}_0-\na p_0-m_0(0,\lambda)](x,0)=\vec{u}_0(x,0),\\
0=[\pt_y(\vec{u}_0\cdot\na c_0+m_0c_0)](x,0),\\
0=[\pt_y(\Delta m_0-\vec{u}_0\cdot\na m_0-\na\cdot(m_0\na c_0))](x,0),\\
0=(\pt_y \vec{u}^{\,0}_t\cdot\na c_0+\pt_y \vec{u}_{0}\cdot\na c^0_t)(x,0,0),\\
0=[\pt_y(\vec{u}^{\,0}_t\cdot\na m_0+\vec{u}_0\cdot\na m^0_t+\na (m^0_t\na c_0+m_0\na c^0_t))](x,0,0),\\
\mathbf{0}=[\Delta \vec{u}^{\,0}_t-\na p_{t0}-m^0_t(0,\lambda)](x,0,0),
\end{array}
\right.
\end{equation*}
where $p_0(x,y)$ solves
\begin{equation*}
\left\{\begin{array}{lll}
\Delta p_0=-\na\cdot[\vec{u}_0\cdot\na \vec{u}_0+m_0(0,\lambda)],\quad (x,y)\in \mathbb{R}^2_{+},\\
\pt_y p_0(x,0)=\pt_y^2u_{02}(x,0)-m_0(x,0)
\end{array}
\right.
\end{equation*}
and
\begin{equation*}
\begin{array}{lll}
\vec{u}^{\,0}_t(x,y,0)=[\Delta \vec{u}_0-\vec{u}_0\cdot\na \vec{u}_0-\na p_0-m_0(0,\lambda)](x,y),\\
c^0_t(x,y,0)=-[\vec{u}_{0}\cdot\na c_0+m_0c_0](x,y),\\
m^0_t(x,y,0)=[\Delta m_0-\vec{u}_0\cdot\na m_0-\na\cdot(m_0\na c_0)](x,y)
\end{array}
\end{equation*}
and $p_{t0}(x,y)$ solves
\begin{equation*}
\left\{\begin{array}{lll}
\Delta p_{t0}=-\na\cdot[\vec{u}^{\,0}_t\cdot\na \vec{u}_0+\vec{u}_0\cdot\na \vec{u}^{\,0}_t+m^0_t(0,\lambda)](x,y,0),\quad (x,y)\in \mathbb{R}^2_{+}\\
\pt_y p_{t0}(x,0)=\pt_y^2u^0_{2t}(x,0,0)-m^0_t(x,0,0).
\end{array}
\right.
\end{equation*}


The following regularity results on solutions of \eqref{e1}-\eqref{e2} with $\va=0$ can be proved by using the well-posedness theory on parabolic equations along with Shauder's fixed point theorem. We omit its proof and refer the reader to \cite[pages 388, 540]{Evans} for details.
\begin{prop}\label{p2}
Assume that $m_0\in H^6_{xy}$, $c_0, \vec{u}_{0}\in H^7_{xy}$ fulfills the compatibility conditions $(A)$. Then \eqref{e1}-\eqref{e2} with $\va=0$ admits a unique solution $(m^0,c^0,\vec{u}^{\,0},\na p^0)$, whose maximal time of existence is denoted by $0<T_*\leq \infty$,  satisfying for any $0<T<T_*$
\ben
\begin{split}
&(m^0,c^0,\vec{u}^{\,0},\na p^0)\in C([0,T];H^6_{xy}\times H^7_{xy}\times H^7_{xy}\times H^5_{xy}),\\
&(m^0,c^0,\vec{u}^{\,0},\na p^0)\in L^2(0,T;H^7_{xy}\times H^7_{xy}\times H^8_{xy}\times H^6_{xy})
\end{split}
\enn
and
\ben
\begin{split}
&\pt^j_t m^{0}\in C([0,T];H^{6-2j}_{xy})\cap L^2(0,T;H^{7-2j}_{xy}),\quad j=1,2,3,\\
&\pt^j_t \vec{u}^{\,0}\in C([0,T];H^{7-2j}_{xy})\cap L^2(0,T;H^{8-2j}_{xy}),\quad j=1,2,3,\\
&\pt_t^j c^0\in C([0,T];H^{7-l}_{xy}),\quad l=1,2,\qquad \pt^3_t c^0\in L^2(0,T;H^4_{xy}).\\
\end{split}
\enn
\end{prop}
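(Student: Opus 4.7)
The plan is to construct the solution via a Picard-type iteration that decouples the three nonlinear blocks of \eqref{e1}-\eqref{e2} at $\va=0$. Given iterates $(m^n,c^n,\vec{u}^{\,n})$, I would solve three linear problems in sequence: the Navier-Stokes system in $\mathbb{R}^2_{+}$ with no-slip boundary condition and forcing $-m^n(0,\lambda)$ to obtain $\vec{u}^{\,n+1}$; the transport-reaction equation $c^{n+1}_t+\vec{u}^{\,n+1}\cdot\na c^{n+1}+m^n c^{n+1}=0$, which is an ODE along the characteristics of $\vec{u}^{\,n+1}$; and then the linear divergence-form parabolic equation for $m^{n+1}$ with drift $\vec{u}^{\,n+1}$ and the conormal flux condition $(\na m^{n+1}-m^{n+1}\na c^{n+1})\cdot\vec{n}=0$ on $\{y=0\}$. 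Convergence is then obtained by extracting a weak limit from uniform high-regularity bounds on the iterates and applying Schauder's fixed point theorem on a ball in a lower-regularity space, as suggested by the reference to \cite{Evans}.

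Each linear subproblem is classical. The Navier-Stokes equation in the half-plane with no-slip data is handled via the Helmholtz projection and parabolic regularity, giving $\vec{u}^{\,n+1}\in C([0,T];H^7_{xy})\cap L^2(0,T;H^8_{xy})$ provided $m^n\in L^\infty(0,T;H^6_{xy})$. The transport equation for $c^{n+1}$ is integrated along the flow of $\vec{u}^{\,n+1}$, and its regularity is propagated by Moser-type commutator estimates together with the divergence-free identity $\na\cdot\vec{u}^{\,n+1}=0$ and the vanishing of $\vec{u}^{\,n+1}$ on $\{y=0\}$, which jointly make the top-order transport term $\int(\vec{u}^{\,n+1}\cdot\na\pt^\alpha c^{n+1})\,\pt^\alpha c^{n+1}$ disappear after integration by parts. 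The $m^{n+1}$-problem is a conormal parabolic boundary value problem whose $H^6_{xy}$-regularity theory mirrors the Neumann case once $\na c^{n+1}$ has been placed in $L^\infty_tH^6_{xy}$.

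Uniform $n$-independent bounds on a time interval $[0,T_*]$ are obtained by differentiating each equation in $(x,y,t)$ up to the allowed orders, testing with the corresponding derivative of the unknown, and closing by Gronwall's inequality. Higher time-derivative regularity is obtained by differentiating the equations in $t$ and trading time derivatives for space derivatives through the PDEs themselves; for instance $\pt_t^3 c^0=-\pt_t^2(\vec{u}^{\,0}\cdot\na c^0+m^0c^0)$, whose $L^2(0,T;H^4_{xy})$ bound follows from the claimed spatial regularity of $\pt_t^j m^0$, $\pt_t^j c^0$ and $\pt_t^j\vec{u}^{\,0}$ for $j\leq 2$. The compatibility conditions $(A)$ are invoked precisely to ensure that successive time derivatives of the unknowns, evaluated at $(x,0,0)$, are consistent with the boundary conditions to the required order; without them the corner $\{t=0\}\cap\pt\mathbb{R}^2_{+}$ would generate singularities that prevent attaining the claimed $H^7$-level regularity. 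Uniqueness follows from a routine energy estimate on the difference of two solutions.

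The main obstacle is the tension between the absence of diffusion in the $c$-equation and the high regularity demanded by Proposition \ref{p2}. Because $c^0$ is merely transported, its spatial regularity is at best that of $c_0$ combined with the regularity of $\vec{u}^{\,0}$, which forces the hypothesis $c_0,\vec{u}_{0}\in H^7_{xy}$, one derivative above $m_0\in H^6_{xy}$. Moreover, each additional time derivative of $c^0$ costs one space derivative via the equation, which accounts for the descending ladder $\pt_t c^0\in H^6_{xy}$, $\pt_t^2 c^0\in H^5_{xy}$, $\pt_t^3 c^0\in L^2_tH^4_{xy}$ built into the statement. Carefully bookkeeping this loss-of-regularity ladder in the transport equation while simultaneously exploiting the parabolic gain in the $m$- and $\vec{u}$-equations, and verifying the lengthy but routine corner compatibilities encoded in $(A)$, is where the bulk of the (omitted) proof lies.
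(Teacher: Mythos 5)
The paper does not actually give a proof of Proposition \ref{p2}: it only remarks that the result "can be proved by using the well-posedness theory on parabolic equations along with Shauder's fixed point theorem" and refers to Evans. Your proposal — sequentially solving the linear Navier--Stokes, transport, and conormal parabolic subproblems, closing with Schauder's fixed point theorem and energy/Gronwall estimates, using the compatibility conditions $(A)$ for corner regularity and tracking the one-derivative-per-time-derivative loss in the diffusionless $c$-equation — is exactly the standard route the paper indicates, so it is consistent with (and a reasonable expansion of) the paper's omitted argument.
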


With proposition \ref{p2} in hand, we are now in the position to state our main results on boundary layer effects of the solutions with small $\va>0$.
\begin{theorem}\label{t1}
Assume that $m_0\in H^6_{xy}$, $c_0, \vec{u}_{0}\in H^7_{xy}$ satisfy compatibility conditions $(A)$. Let $(m^0,c^0,\vec{u}^{\,0},\na p^0)$ be the solution of \eqref{e1}-\eqref{e2} with $\va=0$, whose lifespan is $T_*$. Then for each $0<T<T_*$, there exists a $\va_T>0$ depending on $T$ such that system \eqref{e1}-\eqref{e2} with $\va\in(0,\va_T]$ admits a unique solution 
\ben
(m^\va,c^\va,\vec{u}^{\,\va},\na p^\va)\in C([0,T];H^2_{xy}\times H^3_{xy}\times H^3_{xy}\times H^1_{xy}).
\enn
 Moreover, there exits a constant $C$ independent of $\va$, depending on $T$ such that 
\begin{equation*}
\begin{split}
\|(m^\va-m^0,c^\va-c^0)(x,y,t)\|_{L^\infty([0,T];L^\infty_{xy})}
\leq C\va^{\frac{1}{2}},\quad
\|(\vec{u}^{\,\va}-\vec{u}^{\,0})(x,y,t)\|_{L^\infty([0,T];L^\infty_{xy})}
\leq C\va
\end{split}
\end{equation*}
and 
\begin{equation}\label{a0}
\begin{split}
&\|\pt_xc^\va(x,y,t)-\pt_xc^0(x,y,t)\|_{L^\infty([0,T];L^\infty_{xy})}
\leq C\va^{\frac{1}{8}},\\
&\|\pt_yc^\va(x,y,t)-[\pt_yc^0(x,y,t)+\pt_zc^{B,1}(x,\frac{y}{\sqrt{\va}},t)]\|_{L^\infty([0,T];L^\infty_{xy})}
\leq C\va^{\frac{1}{8}},\\
&\|\na \vec{u}^{\,\va}(x,y,t)-\na \vec{u}^{\,0}(x,y,t)\|_{L^\infty([0,T];L^\infty_{xy})}
\leq C\va^{\frac{3}{4}},
\end{split}
\end{equation}
where $c^{B,1}(x,z,t)$ is the unique solution of \eqref{e7}, derived in Lemma \ref{l10}. 
\end{theorem}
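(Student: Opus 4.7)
The plan is the classical matched asymptotics strategy: truncate the expansion \eqref{b2} to construct an approximate solution $(m^a,c^a,\vec{u}^{\,a},\nabla p^a)$, define the remainder
\[
 (m^R,c^R,\vec{u}^{\,R}) := (m^\va-m^a,\ c^\va-c^a,\ \vec{u}^{\,\va}-\vec{u}^{\,a}),
\]
derive its perturbative system, and close uniform-in-$\va$ energy estimates by a continuation argument that simultaneously produces the solution $(m^\va,c^\va,\vec{u}^{\,\va})$ on $[0,T]$ and the stated convergence rates.

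\textbf{Construction of layer profiles and the approximate solution.} First I would establish well-posedness and sufficient regularity for the boundary-layer profiles specified in \eqref{e7}--\eqref{e15}. Equation \eqref{e7} is a linear parabolic problem on $\mathbb{R}^2_+$ whose drift $z\,\overline{\pt_y u_2^0}$ grows linearly in $z$; standard weighted-$L^2$ parabolic theory combined with the exponential-decay hypothesis (H) yields a smooth $c^{B,1}$ decaying as $z\to\infty$ (Lemma \ref{l10}), and similar reasoning handles $c^{B,2}$ from \eqref{e12}, while $m^{B,1},m^{B,2}$ are read off algebraically from \eqref{e8}, \eqref{e13}. I would also construct the next-order outer profiles $(m^{I,j},c^{I,j},\vec{u}^{\,I,j})$, $j=2,3$, which solve linearised versions of \eqref{e3} driven by lower-order terms; all required traces of $(m^0,c^0,\vec{u}^{\,0})$ are supplied by Proposition \ref{p2}. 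Truncating \eqref{b2} at order $\va^{3/2}$ (with $z$ replaced by $y/\sqrt{\va}$) produces an approximate solution whose residual in \eqref{e1} is $O(\va^2)$ in $L^\infty_T L^2_{xy}$ after accounting for the $\va^{-1/2}$-loss per $\pt_y$-derivative acting on a layer profile, and whose initial and boundary data match those in \eqref{e2} up to exponentially small terms.

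\textbf{Energy estimate and $L^\infty$ rates.} The remainder satisfies a system linearised about $(m^a,c^a,\vec{u}^{\,a})$ with forcing equal to the residual plus quadratic corrections. I would perform an $H^2_{xy}$-type estimate on $m^R$ and $\vec{u}^{\,R}$ and an $H^3_{xy}$-type estimate on $c^R$, with the dissipative term $\va\|\Delta c^R\|_{L^2_{xy}}^2$ providing the only normal-direction coercivity in the $c$-equation; the exponential decay of $c^{B,j},m^{B,j}$ in $z$ allows layer contributions to be localised, and Cauchy-Schwarz trade-offs convert the singular factor $\va^{-1/2}\pt_z c^{B,1}$ appearing in $\vec{u}^{\,R}\cdot\nabla c^a$ against the $\va$-weighted dissipation. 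A continuation argument then yields, on $(0,\va_T]$, uniform smallness of the remainder together with existence and uniqueness of $(m^\va,c^\va,\vec{u}^{\,\va})$ on $[0,T]$. Combining the embedding $H^2(\mathbb{R}^2_+)\hookrightarrow L^\infty$ with the explicit size of $m^a-m^0$, $c^a-c^0$, $\vec{u}^{\,a}-\vec{u}^{\,0}$ (in particular using $(m^{I,1},c^{I,1},\vec{u}^{\,I,1})=(0,0,\mathbf{0})$ from \eqref{e6} and $\vec{u}^{\,B,1}=\vec{u}^{\,B,2}=\mathbf{0}$ from \eqref{e9}, \eqref{e11}) delivers the zeroth-order $L^\infty$ rates. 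The gradient rates in \eqref{a0} are weaker because a normal derivative amplifies the layer singularity by $\va^{-1/2}$; interpolating between $L^2$ and higher Sobolev bounds and tracking the explicit contributions from the approximation yields the $\va^{1/8}$ rate for $\nabla c$ and the improved $\va^{3/4}$ rate for $\nabla\vec{u}$, the latter benefiting from the absence of first- and second-order velocity layer profiles.

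\textbf{Main obstacle.} The delicate step is closing the $c^R$-estimate uniformly in $\va$: the coupling $\vec{u}^{\,R}\cdot\nabla c^a$ carries the singular factor $\va^{-1/2}\pt_z c^{B,1}$ supported in an $O(\sqrt{\va})$-thick strip, and it must be absorbed by the $\va$-weighted diffusion through a careful trade-off that crucially exploits both the exponential decay of the layer profiles and the engineered boundary condition $\pt_z c^{B,1}(x,0,t)=-\overline{\pt_y c^0}$ in \eqref{e7}, which is precisely what cancels the leading trace mismatch.
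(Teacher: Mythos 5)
Your proposal follows essentially the same strategy as the paper: truncate the expansion \eqref{b2} to build an approximate solution, write the remainder system, close uniform-in-$\va$ energy estimates by a bootstrap, and read off the $L^\infty$ rates by interpolation between the $L^2$ smallness of the remainder and $\va$-dependent higher-order bounds. The overall architecture is right.

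Two specific steps, however, would not go through as you describe them. First, the residual is not $O(\va^2)$ in $L^\infty_TL^2_{xy}$: the second equation's residual $g^\va$ contains the term $\va\Delta c^{0}$, which is genuinely $O(\va)$, and removing it would require the outer profile $c^{I,2}$ (and then $m^{I,2}$, with extra compatibility conditions on the data beyond $(A)$). The paper does not construct outer profiles beyond order one; it accepts $\|g^\va\|_{L^\infty_TL^2_{xy}}\leq C\va$ and compensates by scaling the remainder as $\va^{-1/2}(c^\va-C^a)$, so that the forcing enters as $\va^{-1}\|g^\va\|^2\leq C\va$, which is exactly enough for the $O(\va^{1/2})$ remainder bound. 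Second, a direct $H^3_{xy}$ energy estimate on $c^\va-C^a$ (and $H^2_{xy}$ on $m^\va-M^a$) would hit coefficients like $\na^3C^a\sim\va^{-1}\pt_z^3c^{B,1}$ and $\na^2 M^a\sim\va^{-1/2}\pt_z^2 m^{B,1}$, which are not uniformly bounded in any $L^p_{xy}$; the paper instead estimates only $\na C^\va$ and the time derivatives $M^\va_t,\ \na C^\va_t,\ \vec U^\va_t$ in $L^2$, and recovers spatial $H^2$ control \emph{a posteriori} from the equations, accepting negative powers of $\va$ at top order (e.g.\ $\|\na C^\va\|_{H^2}\leq C\va^{-5/4}$) before interpolating. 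Relatedly, the singular coupling you isolate as the main obstacle is not absorbed by the $\va$-weighted diffusion in the $c$-equation: in the paper the term $\va^{1/2}\int U^\va_2\,\pt_y^2c^{B,1}\,V^\va_2$ is handled by writing $U^\va_2(x,y)=\int_0^y\pt_\eta U^\va_2\,d\eta$ (using $\vec U^{\,\va}|_{y=0}=\mathbf 0$), extracting a factor $y^{1/2}$ that is traded against the $O(\sqrt\va)$ localisation of $\pt_z^2c^{B,1}$, and absorbing the result into the \emph{velocity} dissipation $\|\na\vec U^{\,\va}\|_{L^2}^2$. The boundary condition $\pt_zc^{B,1}(x,0,t)=-\ol{\pt_yc^0}$ plays the different (and also essential) role of making $\pt_yC^a(x,0,t)=0$, so that the remainder satisfies homogeneous Neumann data and all boundary terms in the energy identities vanish.
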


\begin{rem} From \eqref{a0} we know that $\pt_y c$ possesses boundary layer effects and it is natural to expect 
$\pt_y m$ also possesses such boundary layer effects due to the chemotactic interaction between $m$ and $c$. Indeed, one can prove that 
\begin{equation*}
\|\pt_ym^\va(x,y,t)-[\pt_ym^0(x,y,t)+\pt_zm^{B,1}(x,\frac{y}{\sqrt{\va}},t)]\|_{L^\infty([0,T];L^\infty_{xy})}
\leq C\va^{\frac{1}{4}},
\end{equation*} 
upon improving the regularity on initial data, e.g. $m_0\in H^8_{xy}$, $c_0, \vec{u}_{0}\in H^9_{xy}$ and requiring further compatibility conditions. Here the $m^{B,1}$ is given in \eqref{e8},
\end{rem}

\begin{rem} 
Neglecting the influence of fluids, i.e. letting $\vec{u}=\na p=\mathbf{0}$ in \eqref{e1}-\eqref{e2} we derive a chemotaxis-only subsystem. From the second equation of this chemotaxis system with $\va=0$ and the first line of $(A)$, we have
\begin{equation*}
\pt_y c^0(x,0,t)=\pt_yc_0(x,0)e^{-\int_0^t[m^0(c^0+1)](x,0,\tau)d\tau}=0,
\end{equation*}
which, substituted into \eqref{e7} gives rise to $\pt_z c^{B,1}(x,0,t)=0$ and thus 
$c^{B,1}(x,z,t)\equiv0$, thanks to the uniqueness of solutions. Inserting $c^{B,1}(x,z,t)\equiv0$ into the second inequality of \eqref{a0}, one gets
\be\label{a00}
\|\pt_yc^\va(x,y,t)-\pt_yc^0(x,y,t)\|_{L^\infty([0,T];L^\infty_{xy})}
\leq C\va^{\frac{1}{8}},
\ee
which, indicates that $\pt_y c$ no longer possess boundary layer effects in the chemotaxis-only subsystem setting. Comparing \eqref{a0} to \eqref{a00} we conclude that the boundary layer effect on $\pt_y c$ for the chemotaxis-Navier-Stokes system \eqref{e1} under boundary conditions \eqref{e2} is induced by the presence of fluids. 
\end{rem}
The remaining parts of this paper is organized as follows. In Section 3, we show the well-posedness on system \eqref{e7}-\eqref{e18}, and derive certain regularities on their solutions. Section 4 is devoted to proving Theorem \ref{t1}. In Subsection 4.1, we first construct approximation solutions for $(m^\va,c^\va,\vec{u}^{\,\va},\na p^\va)$ and then demonstrate well-posedness on the initial-boundary value problem of the remainders between $(m^\va,c^\va,\vec{u}^{\,\va},\na p^\va)$ and the approximation solutions. Based on these well-posedness results, we prove Theorem \ref{t1} in Subsection 4.2.  

\section{Estimates on boundary layer profiles}
To assert the well-posedness on solutions of \eqref{e7} and \eqref{e12}, we introduce the following auxiliary initial-boundary value problem
\be\label{e25}
\left\{
\begin{array}{lll}
\varphi_t+z \ol{\partial_y u^{0}_2}\partial_z \varphi
 +\ol{m^{0}}(\ol{c^{0}}+1)\varphi
=\partial^2_z \varphi+\rho,\qquad (x,z,t)\in \mathbb{R}^2_{+}\times(0,\infty),\\
\ \varphi(x,z,0)=0,\\
\ \partial_z \varphi(x,0,t)=0.
\end{array}
\right.
\ee
For system \eqref{e25}, we have the following result.
\begin{lemma}\label{l9}
Let $k_0,\, k_1,\, k_2\in\mathbb{N}_{+}$ and $0<T<\infty$. Suppose
\be\label{a13}
\begin{split}
&\pt_t^j\ol{\partial_y u^{0}_2},\,\, \pt_t^j[\ol{m^{0}}(\ol{c^{0}}+1)] \in L^2(0,T;H^{k_j}_{x}),\qquad
\lge z\rge^{l}\pt^j_t\rho\in L^2(0,T;H^{k_j}_{x}L^2_z)
\end{split}
\ee
for $j=0,1$ and each $l\in\mathbb{N}$.
Then \eqref{e25} admits a unique solution $\varphi$ defined on $\mathbb{R}^2_{+}\times [0,T]$ fulfilling
\be\label{a14}
\begin{split}
&\lge z\rge^{l}\pt^j_t\varphi\in C([0,T];H^{k_j}_{x}H^1_z),\qquad \lge z\rge^{l}\pt^{j+1}_t\varphi\in L^2(0,T;H^{k_j}_{x}L^2_z)
\end{split}
\ee
for $j=0,1$ and each $l\in\mathbb{N}$. Assume further that 
\be\label{a15}
\begin{split}
&\pt^2_t \ol{\partial_y u^{0}_2},\,\, \pt^2_t[\ol{m^{0}}(\ol{c^{0}}+1)] \in L^2(0,T;H^{k_2}_{x}),\qquad
\lge z\rge^{l}\pt^2_t\rho\in L^2(0,T;H^{k_2}_xL^2_{z})
\end{split}
\ee
for each $l\in\mathbb{N}$. Then 
\be\label{a16}
\begin{split}
&\lge z\rge^{l}\pt^2_t\varphi\in C([0,T];H^{k_2}_{x}H^1_z),\qquad \lge z\rge^{l}\pt^{3}_t\varphi\in L^2(0,T;H^{k_2}_xL^2_{z})
\end{split}
\ee
for each $l\in\mathbb{N}$.
\end{lemma}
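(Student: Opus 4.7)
Equation \eqref{e25} is a linear parabolic problem in $(z,t)$ in which $x$ enters only as a parameter through the boundary traces $\ol{\partial_y u^{0}_2}(x,t)$ and $\ol{m^{0}}(\ol{c^{0}}+1)(x,t)$. By Proposition \ref{p2} and the embedding $H^1(\mathbb{R})\hookrightarrow L^\infty(\mathbb{R})$, these traces and their $x$-derivatives up to order $k_j$ are controlled in $L^\infty_t L^\infty_x$ on $[0,T]$, while \eqref{a13}-\eqref{a15} supply the requisite control on their time derivatives in $L^2(0,T;H^{k_j}_x)$. Existence and uniqueness of a solution $\varphi\in C([0,T];L^2_z)\cap L^2(0,T;H^1_z)$, depending smoothly on the parameter $x$, can be obtained by a standard Galerkin approximation on the half line in the basis of Neumann eigenfunctions of $-\partial_z^2$ and passing to the limit, with the unbounded-in-$z$ transport coefficient $z\ol{\partial_y u^{0}_2}$ handled by the weighted energy estimate below.

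The central tool is a weighted-in-$z$ energy estimate. Multiplying \eqref{e25} by $\lge z\rge^{2l}\varphi$ and integrating in $z\in(0,\infty)$, the transport term is rewritten as
\begin{equation*}
\int_0^\infty z\,\ol{\partial_y u^{0}_2}\,\partial_z\varphi\,\lge z\rge^{2l}\varphi\,dz
=-\frac{1}{2}\ol{\partial_y u^{0}_2}\int_0^\infty\bigl(\lge z\rge^{2l}+2l z^2\lge z\rge^{2l-2}\bigr)\varphi^2\,dz,
\end{equation*}
whose boundary term at $z=0$ vanishes due to the factor $z$; integration by parts of the diffusion produces $\int\lge z\rge^{2l}(\partial_z\varphi)^2 dz$ plus a lower-order cross term absorbed via Cauchy-Schwarz, and its boundary term at $z=0$ vanishes thanks to the Neumann condition. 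Combined with $\varphi(x,z,0)=0$, a Gronwall argument in $t$ yields $\lge z\rge^l\varphi\in C([0,T];L^2_z)\cap L^2(0,T;H^1_z)$ for every $l\in\mathbb{N}$, with norm controlled by $\|\lge z\rge^l\rho\|_{L^2(0,T;L^2_z)}$.

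Higher regularity is obtained by commuting \eqref{e25} with $\partial_x^\alpha$ for $0\le\alpha\le k_j$: since the coefficients depend only on $x$, the commutator reduces to Moser/product estimates in $x$, for which $H^{k_j}_x(\mathbb{R})$ suffices thanks to the one-dimensional Sobolev embedding. To recover the full $H^1_z$ norm I set $\psi=\partial_z\varphi$, differentiate \eqref{e25} in $z$ to see that
\begin{equation*}
\psi_t+z\ol{\partial_y u^{0}_2}\partial_z\psi+\bigl[\ol{\partial_y u^{0}_2}+\ol{m^{0}}(\ol{c^{0}}+1)\bigr]\psi=\partial_z^2\psi+\partial_z\rho,
\end{equation*}
a parabolic equation of the same type with Dirichlet condition $\psi(x,0,t)=0$ (inherited from $\partial_z\varphi(x,0,t)=0$), and rerun the weighted energy estimate. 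For the time derivatives in \eqref{a14} and \eqref{a16}, I introduce $w_j=\partial_t^j\varphi$ ($j=1,2,3$) and observe that each $w_j$ satisfies an analogous parabolic equation whose forcing is built from $\partial_t^j\rho$ and commutator terms already controlled by the lower-order estimates; its initial datum at $t=0$ is computed recursively from the equation using $\varphi(\cdot,\cdot,0)=0$, and the embedding $L^2(0,T;X)\cap H^1(0,T;X)\hookrightarrow C([0,T];X)$ applied to $\rho$ and $\partial_t\rho$ makes these traces well-defined in the relevant weighted Sobolev classes.

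The principal technical obstacle will be the bookkeeping of mixed commutators involving $\partial_x^\alpha$, $\partial_t^j$, and the weight $\lge z\rge^l$ in the time-differentiated equations, together with the verification that the recursively defined initial data $\partial_t^j\varphi|_{t=0}$ lie in the correct weighted classes so that the energy scheme can be started for each $w_j$ — it is precisely this step, for $j=2$, that forces the strengthened hypothesis \eqref{a15}.
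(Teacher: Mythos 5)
Your overall architecture — weighted-in-$z$ energy estimates with weight $\lge z\rge^{2l}$, the integration by parts of the transport term $z\,\ol{\partial_y u^{0}_2}\partial_z\varphi$ (whose boundary contribution vanishes because of the factor $z$), commutation with $\partial_x^\alpha$ up to order $k_j$ handled by one-dimensional Sobolev embedding in $x$, time-differentiation of the equation for $w_j=\partial_t^j\varphi$, and Gronwall — coincides with the paper's proof for the basic $L^2_{xz}$ estimate and the $H^{k_0}_xL^2_z$ estimates. The genuine divergence is in how the $H^1_z$ regularity (the $\partial_z\partial_t^j\varphi\in C([0,T];\cdot)$ part of \eqref{a14} and \eqref{a16}) is obtained. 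The paper never differentiates the equation in $z$: it tests the $\partial_x^j$- and $\partial_t^j$-differentiated equation against $\lge z\rge^{2l}\partial_x^j\partial_t^{j+1}\varphi$, so that the diffusion term produces $\frac{d}{dt}\|\lge z\rge^l\partial_x^j\partial_z\partial_t^j\varphi\|^2_{L^2_{xz}}$ on the left and the forcing is only ever paired with $\partial_t^{j+1}\varphi$, requiring nothing beyond $\lge z\rge^l\partial_t^j\rho\in L^2_TH^{k_j}_xL^2_z$. This simultaneously delivers $\lge z\rge^l\partial_t^{j+1}\varphi\in L^2_TH^{k_j}_xL^2_z$, the other half of \eqref{a14}.

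Your alternative — setting $\psi=\partial_z\varphi$ and rerunning the energy scheme on the Dirichlet problem it satisfies — has a concrete gap as written: the forcing of the $\psi$-equation is $\partial_z\rho$ (and, after time differentiation, $\partial_z\partial_t^j\rho$), but the hypotheses \eqref{a13} and \eqref{a15} give no control whatsoever on $z$-derivatives of $\rho$; the lemma is deliberately stated with $\rho$ only in $H^{k_j}_xL^2_z$ so that it can be applied in Lemmas \ref{l10} and \ref{l11} to right-hand sides like $\Phi$, $\Gamma$ and $\ol{c^0}\Theta$ whose $z$-regularity is exactly what is being bootstrapped. The step "rerun the weighted energy estimate" therefore does not go through under the stated assumptions. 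It is repairable: in $\int_0^\infty\partial_z\rho\,\lge z\rge^{2l}\psi\,dz$ integrate by parts, use the Dirichlet condition $\psi(x,0,t)=0$ to kill the boundary term, and absorb $\int\rho\,\lge z\rge^{2l}\partial_z\psi$ into the dissipation — but this must be said, and the same device must be carried through every $\partial_x^\alpha\partial_t^j$-differentiated version. Alternatively, adopting the paper's "test against $\partial_t^{j+1}\varphi$" device avoids the issue entirely and is the cleaner route here. Your remaining remarks (recursive initial data $\partial_t^j\varphi|_{t=0}$ computed from the equation, the role of \eqref{a15} for $j=2$) are consistent with what the paper does.
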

\begin{proof}
For fixed $l\in\mathbb{N}$, testing the first equation of \eqref{e25} with $\lge z\rge^{2l} \varphi$ and using integration by parts, one gets
\be\label{a70}
\begin{split}
&\frac{1}{2}\frac{d}{dt}\|\lge z\rge^l \varphi\|_{L^2_{xz}}^2+\|\lge z\rge^l\pt_z\varphi\|_{L^2_{xz}}^2\\
=&-2l\int_0^\infty\int_{-\infty}^\infty \lge z\rge^{2l-2} z \,\varphi\,\pt_z\varphi dxdy
+\int_0^\infty\int_{-\infty}^\infty \lge z\rge^{2l}  \rho\,\varphi dxdy\\
&-\int_0^\infty\int_{-\infty}^\infty \lge z\rge^{2l} z[\ol{\partial_y u^{0}_2}\pt_z\varphi]\,\varphi dxdy
-\int_0^\infty\int_{-\infty}^\infty \lge z\rge^{2l}  [\ol{m^{0}}(\ol{c^{0}}+1)\varphi]\,\varphi dxdy
,
\end{split}
\ee
where 
\ben
\begin{split}
&-2l\int_0^\infty\int_{-\infty}^\infty \lge z\rge^{2l-2} z \,\varphi\,\pt_z\varphi dxdy
+\int_0^\infty\int_{-\infty}^\infty \lge z\rge^{2l}  \rho\,\varphi dxdy\\
\leq& \frac{1}{2}\|\lge z\rge^l\pt_z\varphi\|_{L^2_{xz}}^2+(l^2+1)\|\lge z\rge^l\varphi\|_{L^2_{xz}}^2
+\|\lge z\rge^l\rho\|_{L^2_{xz}}^2.
\end{split}
\enn
It follows from integration by parts and the Sobolev embedding inequality that
\ben
\begin{split}
&-\int_0^\infty\int_{-\infty}^\infty \lge z\rge^{2l} z[\ol{\partial_y u^{0}_2}\pt_z\varphi]\,\varphi dxdy
-\int_0^\infty\int_{-\infty}^\infty \lge z\rge^{2l}  [\ol{m^{0}}(\ol{c^{0}}+1)\varphi]\,\varphi dxdy\\
=&l\int_0^\infty\int_{-\infty}^\infty \lge z\rge^{2l-2} z^2\ol{\partial_y u^{0}_2}\,\varphi^2 dxdy
+\frac{1}{2}\int_0^\infty\int_{-\infty}^\infty \lge z\rge^{2l}\ol{\partial_y u^{0}_2}\,\varphi^2 dxdy\\
&-\int_0^\infty\int_{-\infty}^\infty \lge z\rge^{2l}  [\ol{m^{0}}(\ol{c^{0}}+1)\varphi]\,\varphi dxdy\\
\leq & (l+1)\|\ol{\partial_y u^{0}_2}\|_{L^\infty_x}\|\lge z\rge^l\varphi\|_{L^2_{xz}}^2
+\|\ol{m^{0}}(\ol{c^{0}}+1)\|_{L^\infty_x}\|\lge z\rge^l\varphi\|_{L^2_{xz}}^2\\
\leq & C(\|\ol{\partial_y u^{0}_2}\|_{H^1_x}+\|\ol{m^{0}}(\ol{c^{0}}+1)\|_{H^1_x})\|\lge z\rge^l\varphi\|_{L^2_{xz}}^2.
\end{split}
\enn
Substituting the above two estimates into \eqref{a70}, using Gronwall's inequality and \eqref{a13}, we obtain
\be\label{a71}
\begin{split}
\|\lge z\rge^l \varphi\|_{L^\infty_TL^2_{xz}}^2+\|\lge z\rge^l\pt_z\varphi\|_{L^2_TL^2_{xz}}^2\leq C
\end{split}
\ee
for each $l\in\mathbb{N}$.

Based on \eqref{a71}, we next prove that
\be\label{b38}
\begin{split}
\|\lge z\rge^l \varphi\|_{L^\infty_TH^{k_0}_xL^2_{z}}^2+\|\lge z\rge^l \pt_z\varphi\|_{L^2_TH^{k_0}_xL^2_{z}}^2\leq C
\end{split}
\ee
holds true for each $l\in\mathbb{N}$, by the argument of induction. To this end, we assume that
\be\label{a72}
\begin{split}
\|\lge z\rge^l\varphi\|_{L^\infty_TH^{j}_xL^2_{z}}^2+\|\lge z\rge^l \pt_z\varphi\|_{L^2_TH^{j}_xL^2_{z}}^2\leq C
\end{split}
\ee
holds true for each $0\leq j\leq k_0-1$ and $l\in\mathbb{N}$, and show that \eqref{a72} holds true for $j=k_0$. 
Indeed, we apply $\pt^{k_0}_x$ to the first equation of \eqref{e25} and then take the $L^2_{xz}$ inner product of the resulting equation with $\lge z\rge^{2l} \pt^{k_0}_x\varphi$ for $l\in \mathbb{N}$ to have
\be\label{b39}
\begin{split}
&\frac{1}{2}\frac{d}{dt}\|\lge z\rge^l \pt^{k_0}_x\varphi\|_{L^2_{xz}}^2+\|\lge z\rge^l \pt^{k_0}_x\pt_z\varphi\|_{L^2_{xz}}^2\\
=&-2l\int_0^\infty\int_{-\infty}^\infty \lge z\rge^{2l-2} z \pt^{k_0}_x\varphi\,\pt^{k_0}_x\pt_z\varphi dxdy
-\int_0^\infty\int_{-\infty}^\infty \lge z\rge^{2l} z\pt^{k_0}_x [\ol{\partial_y u^{0}_2}\pt_z\varphi]\,\pt^{k_0}_x\varphi dxdy\\
&-\int_0^\infty\int_{-\infty}^\infty \lge z\rge^{2l} \pt^{k_0}_x [\ol{m^{0}}(\ol{c^{0}}+1)\varphi]\,\pt^{k_0}_x\varphi dxdy
+\int_0^\infty\int_{-\infty}^\infty \lge z\rge^{2l} \pt^{k_0}_x \rho\,\pt^{k_0}_x\varphi dxdy\\
:=&I_1+I_2+I_3+I_4.
\end{split}
\ee
It follows from the Cauchy-Schwarz inequality that
\ben
\begin{split}
I_1
\leq 2l\|\lge z\rge^{l}\pt^{k_0}_x\pt_z\varphi\|_{L^2_{xz}}\|\lge z\rge^{l}\pt^{k_0}_x\varphi\|_{L^2_{xz}}
\leq \frac{1}{8}\|\lge z\rge^{l}\pt^{k_0}_x\pt_z\varphi\|_{L^2_{xz}}^2+8l^2\|\lge z\rge^{l}\pt^{k_0}_x\varphi\|_{L^2_{xz}}^2
\end{split}
\enn
and that
\ben
\begin{split}
I_4
\leq \|\lge z\rge^{l}\pt^{k_0}_x\varphi\|_{L^2_{xz}}^2+\|\lge z\rge^{l}\pt^{k_0}_x\rho\|_{L^2_{xz}}^2.
\end{split}
\enn
Integration by parts and the Sobolev embedding inequality lead to
\ben
\begin{split}
I_2=
&l\int_0^\infty\int_{-\infty}^\infty \ol{\partial_y u^{0}_2}\,\lge z\rge^{2l-2}z^2(\pt^{{k_0}}_x\varphi)^2 dxdy
+\frac{1}{2} \int_0^\infty\int_{-\infty}^\infty \lge z\rge^{2l} \ol{\partial_y u^{0}_2}\,(\pt^{{k_0}}_x\varphi)^2 dxdy\\
&-\int_0^\infty\int_{-\infty}^\infty \pt^{k_0}_x\ol{\partial_y u^{0}_2}\,\lge z\rge^{2l}z\pt_z\varphi\,\pt^{{k_0}}_x\varphi dxdy
-\sum_{i=1}^{{k_0}-1} \int_0^\infty\int_{-\infty}^\infty \lge z\rge^{2l} z\,(\pt^i_x \ol{\partial_y u^{0}_2})\,(\pt^{k_0-i}_x\pt_z\varphi)\,\pt^{k_0}_x\varphi dxdy\\
\leq & (l+\frac{1}{2})\|\ol{\partial_y u^{0}_2}\|_{L^\infty_x}\|\lge z\rge^{l}\pt^{{k_0}}_x\varphi\|_{L^2_{xz}}^2
+\|\pt^{k_0}_x\ol{\partial_y u^{0}_2}\|_{L^2_x}\|\lge z\rge^{l+1}\pt_z\varphi\|_{L^\infty_xL^2_{z}}\|\lge z\rge^{l}\pt^{{k_0}}_x\varphi\|_{L^2_{xz}}\\
&+\sum_{i=1}^{{k_0}-1}\|\pt^i_x \ol{\partial_y u^{I,0}_2}\|_{L^\infty_x}\|\lge z\rge^{l+1}\pt^{{k_0}-i}_x\pt_z\varphi\|_{L^2_{xz}}
\|\lge z\rge^1\pt^{{k_0}}_x\varphi\|_{L^2_{xz}}
\\
\leq &C(l+\frac{1}{2})\|\ol{\partial_y u^{0}_2}\|_{H^1_x}\|\lge z\rge^{l}\pt^{{k_0}}_x\varphi\|_{L^2_{xz}}^2
+\frac{1}{2}\|\lge z\rge^{l+1}\pt_z\varphi\|_{H^1_xL^2_{z}}^2\\
&+C\|\ol{\partial_y u^{0}_2}\|_{H^{k_0}_x}^2\|\lge z\rge^{l}\pt^{{k_0}}_x\varphi\|_{L^2_{xz}}^2
+C({k_0}-1)^2\|\lge z\rge^{l+1}\pt_z\varphi\|_{H^{{k_0}-1}_xL^2_{z}}^2.
\end{split}
\enn 
The Sobolev embedding inequality entails that
\ben
\begin{split}
I_3=&-\sum_{i=1}^{{k_0}-1} \int_0^\infty\int_{-\infty}^\infty \lge z\rge^{2l} \,\pt^i_x [\ol{m^{0}}(\ol{c^{0}}+1)]\,(\pt^{{k_0}-i}_x\varphi)\,\pt^{k_0}_x\varphi dxdy\\
&-\int_0^\infty\int_{-\infty}^\infty \lge z\rge^{2l}\,[\ol{m^{0}}(\ol{c^{0}}+1)]\,(\pt^{{k_0}}_x\varphi)^2 dxdy
-\int_0^\infty\int_{-\infty}^\infty \lge z\rge^{2l}\,\pt_x^{k_0}[\ol{m^{0}}(\ol{c^{0}}+1)]\,\varphi\,\pt^{{k_0}}_x\varphi dxdy\\
\leq &\sum_{i=1}^{{k_0}-1}\|\pt^i_x [\ol{m^{0}}(\ol{c^{0}}+1)]\|_{L^\infty_x}\|\lge z\rge^{l}\,\pt^{{k_0}-i}_x\varphi\|_{L^2_{xz}}
\|\lge z\rge^{l}\,\pt^{{k_0}}_x\varphi\|_{L^2_{xz}}\\
&+\|\ol{m^{0}}(\ol{c^{0}}+1)\|_{L^\infty_x}
\|\lge z\rge^{l}\,\pt^{{k_0}}_x\varphi\|_{L^2_{xz}}^2
+\|\pt^{k_0}_x\ol{m^{0}}(\ol{c^{0}}+1)\|_{L^2_x}
\|\lge z\rge^{l}\,\varphi\|_{L^\infty_xL^2_{z}}\|\lge z\rge^{l}\,\pt^{{k_0}}_x\varphi\|_{L^2_{xz}}\\
\leq &({k_0}-1)^2
\|\lge z\rge^{l}\,\varphi\|_{H^{{k_0}-1}_xL^2_{z}}^2 
+C\| [\ol{m^{0}}(\ol{c^{0}}+1)]\|_{H^{k_0}_x}^2
\|\lge z\rge^{l}\,\pt_x^{k_0}\varphi\|_{L^2_{xz}}^2
+\|\lge z\rge^{l}\,\varphi\|_{H^{1}_xL^2_{z}}^2 \\ 
&+C\| [\ol{m^{0}}(\ol{c^{0}}+1)]\|_{H^1_x}
\|\lge z\rge^{l}\,\pt_x^{k_0}\varphi\|_{L^2_{xz}}^2. 
\end{split}
\enn
Substituting the above estimates for $I_1$-$I_4$ into \eqref{b39}, using Gronwall's inequality, \eqref{a13} and the assumption \eqref{a72}, we get
\ben
\begin{split}
\|\lge z\rge^l \pt^{k_0}_x\varphi\|_{L^\infty_TL^2_{xz}}^2+\|\lge z\rge^l \pt^{k_0}_x\pt_z\varphi\|_{L^2_TL^2_{xz}}^2\leq C,
\end{split}
\enn
which, along with \eqref{a72} gives \eqref{b38}.

With $0\leq j \leq {k_0}$, applying $\pt^j_x$ to the first equation of \eqref{e25} and then taking the $L^2_{xz}$ inner product of the resulting equation with $\lge z\rge^{2l} \pt^j_x\varphi_t$ for $l\in \mathbb{N}$ to have
\be\label{b40}
\begin{split}
&\frac{1}{2}\frac{d}{dt}\|\lge z\rge^l \pt^j_x\pt_z\varphi\|_{L^2_{xz}}^2+\|\lge z\rge^l \pt^j_x\varphi_t\|_{L^2_{xz}}^2\\
=&-2l\int_0^\infty\int_{-\infty}^\infty \lge z\rge^{2l-2} z \pt^j_x\pt_z\varphi\,\pt^j_x\varphi_t dxdy
-\int_0^\infty\int_{-\infty}^\infty \lge z\rge^{2l} z\pt^j_x [\ol{\partial_y u^{0}_2}\pt_z\varphi]\,\pt^j_x\varphi_t dxdy\\
&-\int_0^\infty\int_{-\infty}^\infty \lge z\rge^{2l} \pt^j_x [\ol{m^{0}}(\ol{c^{0}}+1)\varphi]\,\pt^j_x\varphi_t dxdy
+\int_0^\infty\int_{-\infty}^\infty \lge z\rge^{2l} \pt^j_x \rho\,\pt^j_x\varphi_t dxdy\\
:=&Q_1+Q_2+Q_3+Q_4.
\end{split}
\ee
It follows from the Sobolev embedding inequality and Cauchy-Schwarz inequality that
\ben
\begin{split}
Q_2
\leq & \|\pt^j_x\ol{\partial_y u^{0}_2}\|_{L^2_x}\|\lge z\rge^{l+1}\pt_z\varphi\|_{L^\infty_xL^2_{z}}
\|\lge z\rge^l\pt^{j}_x\varphi_t\|_{L^2_{xz}}\\
&+\sum_{i=0}^{j-1}\|\pt^i_x \ol{\partial_y u^{0}_2}\|_{L^\infty_x}\|\lge z\rge^{l+1}\pt^{j-i}_x\pt_z\varphi\|_{L^2_{xz}}
\|\lge z\rge^l\pt^{j}_x\varphi_t\|_{L^2_{xz}}
\\
\leq &C({k_0}+1)^2\|\ol{\partial_y u^{0}_2}\|_{H^{k_0}_x}^2\|\lge z\rge^{l+1}\pt_z\varphi\|_{H^{k_0}_{x}L^2_z}^2
+\frac{1}{6}\|\lge z\rge^{l}\pt^{j}_x\varphi_t\|_{L^2_{xz}}^2
\end{split}
\enn
and that
\ben
\begin{split}
Q_3
\leq&  C({k_0}+1)^2\|\ol{m^{0}}(\ol{c^{0}}+1)\|_{H^{k_0}_x}^2\|\lge z\rge^{l}\varphi\|_{H^{k_0}_{x}L^2_z}^2
+\frac{1}{6}\|\lge z\rge^{l}\pt^{j}_x\varphi_t\|_{L^2_{xz}}^2.
\end{split}
\enn
The Cauchy-Schwarz inequality entails that
\ben
\begin{split}
Q_1+Q_4
\leq &6l^2\|\lge z\rge^{l}\pt_z\varphi\|_{H^{k_0}_{x}L^2_z}^2+6\|\lge z\rge^{l}\rho\|_{H^{k_0}_{x}L^2_z}^2
+\frac{1}{6}\|\lge z\rge^{l}\pt^{j}_x\varphi_t\|_{L^2_{xz}}^2.
\end{split}
\enn
Inserting the above estimates for $Q_1-Q_4$ into \eqref{b40}, one gets
\be\label{b41}
\begin{split}
&\frac{d}{dt}\|\lge z\rge^l \pt^j_x\pt_z\varphi\|_{L^2_{xz}}^2+\|\lge z\rge^l \pt^j_x\varphi_t\|_{L^2_{xz}}^2\\
\leq &
C[({k_0}+1)^2\|\ol{\partial_y u^{0}_2}\|_{H^{k_0}_x}^2+l^2]\|\lge z\rge^{l+1}\pt_z\varphi\|_{H^{k_0}_{x}L^2_z}^2
+6\|\lge z\rge^{l}\rho\|_{H^{k_0}_{x}L^2_z}^2\\
&+C({k_0}+1)^2\|\ol{m^{0}}(\ol{c^{0}}+1)\|_{H^{k_0}_x}^2\|\lge z\rge^{l}\varphi\|_{H^{k_0}_{x}L^2_z}^2
.
\end{split}
\ee
Summing \eqref{b41} from $j=0$ to $j={k_0}$, then employing Gronwall's inequality to the resulting inequality and using \eqref{a13}, \eqref{b38} to derive
\be\label{b42}
\begin{split}
\|\lge z\rge^l \pt_z\varphi\|_{L^\infty_TH^{k_0}_xL^2_{z}}^2+\|\lge z\rge^l\varphi_t\|_{L^2_TH^{k_0}_xL^2_{z}}^2\leq C
\end{split}
\ee
holds true for each $l\in \mathbb{N}$.

Differentiating the first equation of \eqref{e25} with respect to $t$ and applying $\pt^j_x$ with $0\leq j\leq k_1$ to the resulting equality, one gets
\be\label{e26}
\begin{split}
\pt^j_x\varphi_{tt}+z \pt^j_x[\ol{\partial_y u^{0}_{2t}}\partial_z \varphi]+z \pt^j_x[\ol{\partial_y u^{0}_2}\partial_z \varphi_t]
=&\pt^j_x\partial^2_{z} \varphi_t+\pt_x^j\rho_t -\pt^j_x\{[\ol{m^{0}}(\ol{c^{0}}+1)]_t\varphi\}\\
&-\pt^j_x\{\ol{m^{0}}(\ol{c^{0}}+1)\varphi_t\}
.
\end{split}
\ee
For $l\in \mathbb{N}$, testing \eqref{e26} with $\lge z\rge^{2l} \pt^j_x\varphi_{t}$ in $L^2_{xz}$ and by a similar argument used in attaining \eqref{b38}, we obtain 
\be\label{b43}
\begin{split}
\|\lge z\rge^l \varphi_t\|_{L^\infty_TH^{k_1}_xL^2_{z}}^2+\|\lge z\rge^l \pt_z\varphi_t\|_{L^2_TH^{k_1}_xL^2_{z}}^2\leq C
\end{split}
\ee
holds true for each $l\in\mathbb{N}$. Testing \eqref{e26} with $\lge z\rge^{2l} \pt^j_x\varphi_{tt}$ in $L^2_{xz}$ and employing a similar argument used in deriving \eqref{b42}, one has 
\be\label{b44}
\begin{split}
\|\lge z\rge^l \pt_z\varphi_t\|_{L^\infty_TH^{k_1}_xL^2_{z}}^2+\|\lge z\rge^l \varphi_{tt}\|_{L^2_TH^{k_1}_xL^2_{z}}^2\leq C.
\end{split}
\ee

Assume further that \eqref{a15} holds true. Applying $\pt_t$ and $\pt^j_x$ to \eqref{e26} with $0\leq j\leq k_2$ and taking the $L^2_{xz}$ inner product of the resulting equation with $\lge z\rge^{2l} \pt^j_x\varphi_{tt}$ and $\lge z\rge^{2l} \pt^j_x\varphi_{ttt}$ respectively, then using similar arguments in obtaining \eqref{b38} and \eqref{b42} to have
\be\label{b45}
\begin{split}
\|\lge z\rge^l \varphi_{tt}\|_{L^\infty_TH^{k_2}L^2_{z}}^2+\|\lge z\rge^l \pt_z\varphi_{tt}\|_{L^2_TH^{k_2}_xL^2_{z}}^2\leq C
\end{split}
\ee
and 
\be\label{b46}
\begin{split}
\|\lge z\rge^l \pt_z\varphi_{tt}\|_{L^\infty_TH^{k_2}_xL^2_{z}}^2+\|\lge z\rge^l \varphi_{ttt}\|_{L^2_TH^{k_2}_xL^2_{z}}^2\leq C.
\end{split}
\ee
Collecting \eqref{b38}, \eqref{b42}, \eqref{b43} and \eqref{b44}, one derives \eqref{a14}. \eqref{a16} follows from \eqref{b45} and \eqref{b46}.  The proof is completed.

\end{proof}

Based on the above results, we next establish the well-posedness of \eqref{e7}.
\begin{lemma}\label{l10}
Let $(m^0,c^0,\vec{u}^{\,0},\na p^0)$ be the solution derived in Proposition \ref{p2}, whose lifespan is $T_*$. Suppose $0<T<T_*$. Then system \eqref{e7} admits a unique solution $c^{B,1}$ on $[0,T]$, which, along with the $m^{B,1}$ defined in \eqref{e8} fulfills
\be\label{b54}
\begin{split}
&\lge z\rge^l c^{B,1},\,\, \lge z\rge^l m^{B,1}\in C([0,T];H^{5}_xL^2_z)\cap C([0,T];H^{4}_xH^1_z)\cap C([0,T];H^{3}_xH^3_z),\\
&\lge z\rge^l c^{B,1}_t,\,\, \lge z\rge^l m^{B,1}_t\in C([0,T];H^{4}_xL^2_z)\cap C([0,T];H^{3}_xH^1_z)\cap C(0,T;H^{2}_xH^3_z)\cap L^2(0,T;H^{3}_xH^2_z),\\
&\lge z\rge^l \pt^2_t c^{B,1},\,\,\lge z\rge^l \pt^2_tm^{B,1}\in C([0,T];H^{2}_xH^1_z)\cap L^2(0,T;H^{3}_xL^2_z),\\
&\lge z\rge^l \pt^3_t c^{B,1},\,\,\lge z\rge^l\pt^3_t m^{B,1}\in L^2(0,T;H^2_xL^2_{z}),
\end{split}
\ee
for each $l\in \mathbb{N}$.
Furthermore, the $p^{B,2}$ defined in \eqref{e11} satisfies
\be\label{b70}
\begin{split}
&\lge z\rge^l p^{B,2}\in C([0,T];H^{5}_xH^1_z)\cap C([0,T];H^{4}_xH^2_z)\cap C([0,T];H^{3}_xH^4_z),\\
&\lge z\rge^l p^{B,2}_t\in C([0,T];H^{4}_xH^1_z)\cap C([0,T];H^{3}_xH^2_z)\cap C([0,T];H^{2}_xH^4_z)
\end{split}
\ee
for each $l\in \mathbb{N}$.
\end{lemma}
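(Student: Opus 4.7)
The plan is to reduce the inhomogeneous-Neumann problem \eqref{e7} to the homogeneous-Neumann auxiliary problem \eqref{e25}, apply Lemma \ref{l9}, and then bootstrap higher $z$-derivatives directly from the equation. I would first introduce the explicit boundary corrector
\[
\psi(x,z,t):=-\ol{\partial_y c^{0}}(x,t)\,z\,e^{-z},
\]
which satisfies $\partial_z\psi|_{z=0}=-\ol{\partial_y c^{0}}$ and is Schwartz in $z$ for each $(x,t)$. The compatibility condition $\partial_y c_0(x,0)=0$ from (A) gives $\ol{\partial_y c^{0}}(x,0)\equiv 0$, hence $\psi(x,z,0)\equiv 0$. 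Setting $\varphi:=c^{B,1}-\psi$, a direct computation shows that $\varphi$ solves \eqref{e25} with the forcing
\[
\rho:=-\psi_t - z\,\ol{\partial_y u^{0}_{2}}\,\partial_z\psi - \ol{m^{0}}(\ol{c^{0}}+1)\psi + \partial_z^2\psi
\]
and with zero initial and Neumann data.

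Next, I would apply Lemma \ref{l9} with $k_0=5$, $k_1=4$, $k_2=2$. The required bounds on $\ol{\partial_y u^{0}_{2}}$, $\ol{m^{0}}(\ol{c^{0}}+1)$, and on $\ol{\partial_y c^{0}}$ together with its first two time derivatives in $L^2(0,T;H^{k_j}_x)$ follow from Proposition \ref{p2} and the trace inequality $H^s(\mathbb{R}^2_+)\hookrightarrow H^{s-1/2}(\mathbb{R})$. Because each summand of $\rho$ is a product of such a trace with a Schwartz function of $z$ (of the form $z^je^{-z}$), the weighted hypotheses \eqref{a13} and \eqref{a15} are verified for every $l\in\mathbb{N}$. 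Lemma \ref{l9} then furnishes a unique $\varphi$ satisfying \eqref{a14} and \eqref{a16}, and reversing the lift $c^{B,1}=\varphi+\psi$ preserves these bounds since $\psi$ inherits the relevant regularity from $\ol{\partial_y c^{0}}$.

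Third, I would bootstrap the higher $z$-regularity listed in \eqref{b54}. Rewriting \eqref{e7} as
\[
\partial_z^2 c^{B,1}=c^{B,1}_t+z\,\ol{\partial_y u^{0}_{2}}\,\partial_z c^{B,1}+\ol{m^{0}}(\ol{c^{0}}+1)\,c^{B,1}
\]
and differentiating repeatedly in $z$ expresses $\partial_z^{k+2}c^{B,1}$ as a polynomial in $\partial_z^{\leq k+1}c^{B,1}$, $\partial_z^{\leq k}c^{B,1}_t$, and smooth $x,t$-traces; the explicit factor $z$ is absorbed by lowering the polynomial weight $\lge z\rge^l$ (Lemma \ref{l9} controls every $l$). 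This iteration yields $\lge z\rge^l c^{B,1}\in C([0,T];H^3_xH^3_z)$, and the analogous procedure applied to the $t$-differentiated equation gives $\lge z\rge^l c^{B,1}_t\in C([0,T];H^2_xH^3_z)\cap L^2(0,T;H^3_xH^2_z)$, completing \eqref{b54} for $c^{B,1}$. Since $m^{B,1}=\ol{m^{0}}\,c^{B,1}$, the Leibniz rule together with the ample $x,t$-regularity of $\ol{m^{0}}$ transfers all the bounds to $m^{B,1}$. Finally, $p^{B,2}=\lambda\int_z^\infty m^{B,1}\,d\eta$ satisfies $\partial_z p^{B,2}=-\lambda m^{B,1}$, gaining one $z$-derivative over $m^{B,1}$; the weighted $L^2_z$ norms of $p^{B,2}$ itself are controlled by $\|\lge\eta\rge^{l+1}m^{B,1}\|_{L^2_z}$ via Cauchy-Schwarz against the integrable factor $\lge\eta\rge^{-l-1}$, giving \eqref{b70}.

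The main obstacle is the $z$-bootstrap in the third step: each extra $\partial_z$ recovered from the equation costs one unit of $x$- or $t$-regularity (through $c^{B,1}_t$) and, because of the unbounded coefficient $z\,\ol{\partial_y u^{0}_{2}}$, one unit of the polynomial weight $\lge z\rge^l$. Provided this trade-off is bookkept carefully, no new analytic difficulty arises beyond what is already contained in Lemma \ref{l9} and Proposition \ref{p2}.
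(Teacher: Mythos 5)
Your overall architecture (lift the inhomogeneous Neumann data, apply Lemma \ref{l9}, bootstrap $z$-regularity from the equation, then transfer to $m^{B,1}$ and $p^{B,2}$) matches the paper's, but your choice of lift creates a genuine gap. The paper takes the corrector $S$ to be the solution of the heat equation $S_t=\partial_z^2S$ with Neumann data $-\ol{\partial_y c^{0}}$, precisely so that the terms $-S_t+\partial_z^2S$ cancel in the reduced forcing; the resulting $\Phi=-z\,\ol{\partial_y u^{0}_2}\,\partial_z S-\ol{m^{0}}(\ol{c^{0}}+1)S$ contains no time derivative of the boundary trace. Your explicit corrector $\psi=-\ol{\partial_y c^{0}}\,z e^{-z}$ does not solve the heat equation, so your forcing $\rho$ contains $-\psi_t$, i.e.\ the factor $\partial_t\ol{\partial_y c^{0}}$, and $\partial_t^j\rho$ contains $\partial_t^{j+1}\ol{\partial_y c^{0}}$. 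By Proposition \ref{p2} and the trace theorem one only has $\partial_t^{j}\ol{\partial_y c^{0}}\in L^2(0,T;H^{5-j}_x)$ (this is \eqref{a1}), so each extra time derivative of the trace costs one $x$-derivative: $\rho\in L^2(0,T;H^4_xL^2_z)$ and $\partial_t\rho\in L^2(0,T;H^3_xL^2_z)$ only. Hypothesis \eqref{a13} with $k_0=5$, $k_1=4$ therefore fails for your $\rho$ — e.g.\ $\lge z\rge^l\partial_t\rho\in L^2(0,T;H^4_xL^2_z)$ would require $\partial_t^2\ol{\partial_y c^{0}}\in L^2(0,T;H^4_x)$, whereas only $H^3_x$ is available (and even expressing $\partial_t^2 c^0$ through the equation does not recover the missing derivative, since terms like $\ol{\partial_y m^0_t}\,\ol{c^0}$ cap out below $H^4_x$). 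Consequently your argument yields at best $\lge z\rge^l c^{B,1}\in C([0,T];H^4_xH^1_z)$ and $\lge z\rge^l c^{B,1}_t\in C([0,T];H^3_xH^1_z)$, one order in $x$ short of the first two lines of \eqref{b54} (and hence of \eqref{b70}), and that top order is actually consumed later, e.g.\ in \eqref{b51}.

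The fix is exactly the paper's device (or any corrector annihilating $-\psi_t+\partial_z^2\psi$ up to terms not involving $\partial_t\ol{\partial_y c^{0}}$): take the lift to be the heat-semigroup solution of \eqref{e0}, whose regularity \eqref{a22} in $x$ tracks that of $\ol{\partial_y c^{0}}$ itself rather than of its time derivative. The remaining steps of your proposal — zero compatibility at $t=0$ from condition $(A)$, the $z$-bootstrap trading weights $\lge z\rge^l$ against the coefficient $z\,\ol{\partial_y u^0_2}$, and the transfer to $m^{B,1}=\ol{m^{0}}c^{B,1}$ and $p^{B,2}=\lambda\int_z^\infty m^{B,1}\,d\eta$ — are sound and coincide with the paper's.
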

\begin{proof}
From Proposition \ref{p2} and the trace theorem, we deduce that
\be\label{a1}
\begin{split}
\pt_t^j\ol{\pt_y c^0}\in L^2(0,T;H^{5-j}_x)
\end{split}
\ee
for $j=0,1,2,3$. Let $S(x,z,t)$ be the solution of the following system 
\be\label{e0}
\left\{
\begin{array}{lll}
S_t
=\partial_z^2 S,\\
S(x,z,0)=0,\\
\partial_z S(x,0,t)=-\ol{\partial_y c^{0}}.
\end{array}
\right.
\ee
Then it follows from the standard well-posedness theory on parabolic systems and \eqref{a1} that
\be\label{a22}
\begin{split}
&\lge z\rge^l\pt^j_t S\in C([0,T];H^{5-j}_xL^2_z)\cap C([0,T];H^{4-j}_xH^1_z)\cap L^2(0,T;H^{5-j}_xH^1_z),\quad j=0,1,2,\\
&\lge z\rge^l\pt^3_t S\in C([0,T];H^{2}_xL^2_z)\cap L^2(0,T;H^{2}_xH^1_z)
\end{split}
\ee
for each $l\in \mathbb{N}$. Denote $\tilde{c}^{B,1}(x,z,t)=c^{B,1}(x,z,t)-S(x,z,t)$. Then from \eqref{e7} and \eqref{e0}, one knows that $\tilde{c}^{B,1}$ solves
\be\label{e10}
\left\{
\begin{array}{lll}
\tilde{c}^{B,1}_t+z \ol{\partial_y u^{0}_2} \partial_z \tilde{c}^{B,1}+\ol{m^{0}}(\ol{c^{0}}+1)\tilde{c}^{B,1}
=\partial_z^2 \tilde{c}^{B,1}+\Phi,\\
\tilde{c}^{B,1}(x,z,0)=0,\\
\partial_z \tilde{c}^{B,1}(x,0,t)=0,
\end{array}
\right.
\ee
where 
\ben
\Phi(x,z,t)=-z \ol{\partial_y u^{0}_2} \partial_z S(x,z,t)-\ol{m^{0}}(\ol{c^{0}}+1)S(x,z,t).
\enn
From Proposition \ref{p2} and the trace theorem, one gets
\be\label{b55}
\begin{split}
&\pt^j_t \ol{\pt_y u^{0}_2},\,\,\pt_t^j[\ol{m^0}(\ol{c^0}+1)]\in C([0,T];H^{5-2j}_x)\cap L^2(0,T;H^{6-2j}_x) ,\quad j=0,1,2.\\
\end{split}
\ee
which, along with \eqref{a22} leads to
\be\label{a31}
\begin{split}
&\lge z\rge^l \Phi\in L^2(0,T;H^{5}_xL^2_z),\quad \lge z\rge^l\pt_t \Phi\in L^2(0,T;H^{4}_xL^2_z),\quad
\lge z\rge^l\pt^2_t \Phi\in L^2(0,T;H^{2}_xL^2_z),
\end{split}
\ee
for each $l\in \mathbb{N}$. With \eqref{b55} and \eqref{a31} in hand, we apply Lemma \ref{l9} with $k_0=5$, $k_1=4$, $k_2=2$ to \eqref{e10} to have
\be\label{a47}
\begin{split}
&\lge z\rge^{l}\tilde{c}^{B,1}\in C([0,T];H^{5}_{x}H^1_z),\quad \lge z\rge^{l}\tilde{c}^{B,1}_t\in C([0,T];H^{4}_{x}H^1_z)\cap L^2(0,T;H^{5}_{x}L^2_z),\\
&\lge z\rge^{l}\pt_t^2\tilde{c}^{B,1}\in C([0,T];H^{2}_{x}H^1_z)\cap L^2(0,T;H^{4}_{x}L^2_z),\quad \lge z\rge^{l}\pt_t^3\tilde{c}^{B,1}\in L^2(0,T;H^{2}_{x}L^2_z)
\end{split}
\ee
for each $l\in\mathbb{N}$, which, along with \eqref{a22} gives rise to
\be\label{a26}
\begin{split}
&\lge z\rge^l\pt^i_t c^{B,1}\in C([0,T];H^{5-i}_xL^2_z)\cap C([0,T];H^{4-i}_xH^1_z),\quad i=0,1,\\
&\lge z\rge^l \pt^2_tc^{B,1}\in C([0,T];H^{2}_xH^1_z)\cap L^2(0,T;H^{3}_xL^2_z),\qquad \lge z\rge^l\pt^3_t c^{B,1}\in  L^2(0,T;H^2_xL^2_{z})
\end{split}
\ee
for each $l\in \mathbb{N}$.
 It follows from \eqref{e7}, \eqref{a26} and \eqref{b55} that
\be\label{b52}
\begin{split}
\|\lge z\rge^{l}c^{B,1}\|_{L^\infty_T H^{3}_x H^2_z}
\leq& \|\lge z\rge^l c^{B,1}_t\|_{L^\infty_T H^{3}_x L^2_z}
+\|\ol{\pt_y u^{0}_2}\|_{L^\infty_T H^{3}_x}\|\lge z\rge^{l+1}c^{B,1}\|_{L^\infty_T H^{3}_x H^1_z}\\
&+\|[\ol{m^{0}}(\ol{c^{0}}+1)]\|_{L^\infty_T H^{3}_x}
\|\lge z\rge^{l}c^{B,1}\|_{L^\infty_T H^{3}_x L^2_z}
\\
\leq &C,
\end{split}
\ee
which, along with \eqref{e7}, \eqref{a26} and \eqref{b55} gives
\be\label{b47}
\begin{split}
\|\lge z\rge^{l}c^{B,1}\|_{L^\infty_T H^{3}_x H^3_z}
\leq& \|\lge z\rge^lc^{B,1}_t\|_{L^\infty_T H^{3}_x H^1_z}
+\|\ol{\pt_y u^{0}_2}\|_{L^\infty_T H^{3}_x}\|\lge z\rge^{l+1}c^{B,1}\|_{L^\infty_T H^{3}_x H^2_z}\\
&+\|[\ol{m^{I,0}}(\ol{c^{0}}+1)]\|_{L^\infty_T H^{3}_x}
\|\lge z\rge^{l}c^{B,1}\|_{L^\infty_T H^{3}_x H^1_z}
\\
\leq &C.
\end{split}
\ee
Differentiating \eqref{e7} with respect to $t$ and applying a similar argument used in attaining \eqref{b52} and \eqref{b47} to the resulting equation and using \eqref{a26} and \eqref{b55} to have
\be\label{b53}
\|\lge z\rge^{l}c^{B,1}_t\|_{L^\infty_T H^{2}_x H^3_z}+\|\lge z\rge^{l}c^{B,1}_t\|_{L^2_T H^{3}_x H^2_z}\leq C.
\ee
Collecting \eqref{a26}, \eqref{b47}, \eqref{b53} and using \eqref{e8}, we derive \eqref{b54}. \eqref{b70} follows from \eqref{e11} and \eqref{b54}. The proof is completed.

\end{proof}
 The well-posedness on \eqref{e12} is as follows.
\begin{lemma}\label{l11}
Let $(m^0,c^0,\vec{u}^{\,0},\na p^0)$ be the solution of derived in Proposition \ref{p2} and $0<T<T_*$. Let $c^{B,1}$ and $m^{B,1}$ be the solutions derived in Lemma \ref{l10}. Then system \eqref{e12} admits a unique solution $c^{B,2}$ on $[0,T]$, which along with the $m^{B,2}$ defined in \eqref{e13} satisfying
\be\label{b62}
\begin{split}
&\lge z\rge^{l} c^{B,2},\,\,\lge z\rge^{l} m^{B,2}\in C([0,T];H^{4}_{x}H^1_z)\cap C([0,T];H^{2}_{x}H^3_z),\\
& \lge z\rge^{l} c^{B,2}_t,\,\,\lge z\rge^{l} m^{B,2}_t\in C([0,T];H^{2}_{x}H^1_z)\cap L^2(0,T;H^{3}_{x}L^2_z)\cap L^2(0,T;H^{2}_{x}H^2_z),\\
&\lge z\rge^{l} \pt^2_t c^{B,2},\,\,\lge z\rge^{l} \pt^2_t m^{B,2} \in L^2(0,T;H^{2}_{x}L^2_z)
\end{split}
\ee
for each $l\in\mathbb{N}$. Furthermore, the $\xi$ in \eqref{e18} fulfills
\be\label{b63}
\begin{split}
\lge z\rge^{l} \xi\in C([0,T];H^3_{x}H^2_z),\quad 
\lge z\rge^{l} \xi_t\in C([0,T];H^2_xH^2_{z}),\quad
\lge z\rge^{l}\pt^2_t\xi \in L^2(0,T;L^2_{xz})
\end{split}
\ee
for each $l\in\mathbb{N}$.
\end{lemma}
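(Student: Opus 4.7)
The plan is to view \eqref{e12} as an instance of the auxiliary problem \eqref{e25} with unknown $\varphi = c^{B,2}$ and source $\rho = \Gamma - \ol{c^{0}}\,\Theta$, so that Lemma \ref{l9} yields all weighted Sobolev estimates on $c^{B,2}$; the bounds on $m^{B,2}$ and on $\xi$ will then be read off from their defining formulas \eqref{e13} and \eqref{e18}. The only genuine work is to verify the hypotheses \eqref{a13} and \eqref{a15} for this $\rho$ with the indices $k_0=4$, $k_1=3$, $k_2=2$ that are dictated by the target regularity in \eqref{b62}. The coefficients $\ol{\pt_y u^{0}_2}$ and $\ol{m^{0}}(\ol{c^{0}}+1)$ have already been handled in \eqref{b55}, so only the source $\rho$ needs attention.

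For the source term $\Gamma$ in \eqref{e14}, each summand is a product of a boundary trace of an outer profile (bounded in $L^\infty_T H^{k}_x$ for sufficiently large $k$ by Proposition \ref{p2} and the trace theorem), a polynomial factor $z$ or $z^2$, and a boundary-layer quantity ($c^{B,1}$, $\pt_z c^{B,1}$, $m^{B,1}$, or $m^{B,1} c^{B,1}$) for which the weighted regularity stated in \eqref{b54} is available. Absorbing the polynomial factors $z,z^2$ into the weight $\lge z\rge^l$ (possible because Lemma \ref{l10} provides estimates for every $l\in\mathbb{N}$), standard product estimates in $H^k_x L^2_z$ give $\lge z\rge^l\pt_t^j \Gamma \in L^2(0,T;H^{k_j}_x L^2_z)$ for $j=0,1,2$ with the chosen indices. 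For $\Theta$ in \eqref{e15}, Minkowski's inequality in $z$ combined with the exponential decay in hypothesis (H)—which is guaranteed by the weighted bounds $\lge z\rge^l c^{B,1},\lge z\rge^l m^{B,1}\in L^\infty_T H^k_x H^m_z$ of \eqref{b54}—reduces each $\int_z^\infty$ to an $L^2_z$ bound that is controlled by the integrand with one extra power of $\lge z\rge$. This produces $\lge z\rge^l \pt_t^j \Theta\in L^2(0,T;H^{k_j}_x L^2_z)$, and multiplication by the boundary trace $\ol{c^0}$ (which lies in $L^\infty_T H^5_x$) preserves these estimates. Hence \eqref{a13}, \eqref{a15} are satisfied for $\rho$ with $(k_0,k_1,k_2)=(4,3,2)$, and Lemma \ref{l9} yields the asserted weighted regularity of $c^{B,2}$ in \eqref{b62}. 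The corresponding regularity of $m^{B,2}$ then follows from the algebraic identity $m^{B,2}=\ol{m^0}\,c^{B,2}-\Theta$ in \eqref{e13} together with the bounds on $\Theta$ and Proposition \ref{p2}.

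For \eqref{b63}, I decompose $\xi$ into its nine summands. Five of them involve a single outer integration $\int_z^\infty$ of products built from $c^{B,1},m^{B,1},c^{B,2},m^{B,2},\pt_\eta c^{B,1},\pt_\eta c^{B,2}$ (with at most a polynomial weight $\eta$ or $\eta^2$) multiplied by boundary traces of derivatives of the outer profiles; these are treated exactly as $\Theta$ above. The remaining four involve a double integral $\int_z^\infty\!\int_\eta^\infty$, including the term $\int_z^\infty\!\int_\eta^\infty m^{B,1}_t(x,s,t)\,ds\,d\eta$, the term with $\pt^2_x m^{B,1}$, and two products in $s$. Swapping the order of integration via Fubini rewrites each as $\int_z^\infty (s-z)\,g(x,s,t)\,ds$; pulling $s-z\leq s+\lge z\rge$ under the integral and appealing once more to Minkowski and to the weighted bounds on $g$ and $g_t$ from \eqref{b54}–\eqref{b62} produces the claimed $C([0,T];H^3_x H^2_z)$ and $C([0,T];H^2_x H^2_z)$ regularity of $\xi$ and $\xi_t$, and the $L^2(0,T;L^2_{xz})$ bound on $\pt_t^2\xi$.

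The main technical hurdle is bookkeeping: each step requires balancing how many $x$-derivatives, how many $z$-derivatives, and how many $t$-derivatives land on which factor, since the regularity of $c^{B,1}$ and $m^{B,1}$ in \eqref{b54} strictly decreases with the number of time derivatives, while the highest-order term in $\xi$ (namely $\int_z^\infty\!\int_\eta^\infty \pt^2_x m^{B,1}$) already consumes two $x$-derivatives before differentiation in $t$. This forces the indices to drop from $(k_0,k_1,k_2)=(5,4,2)$ in Lemma \ref{l10} to $(4,3,2)$ here, which is exactly what \eqref{b62} reports. Once this accounting is in place, the estimates themselves are immediate consequences of Lemma \ref{l9}, Minkowski's inequality, and Fubini's theorem.
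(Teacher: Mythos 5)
Your overall strategy coincides with the paper's: treat \eqref{e12} as the auxiliary problem \eqref{e25} with $\rho=\Gamma-\ol{c^0}\Theta$, verify the hypotheses of Lemma \ref{l9} by estimating $\Gamma$ directly and $\Theta$ via the weighted Cauchy--Schwarz trick for $\int_z^\infty$ (the paper's \eqref{b57}), then read off $m^{B,2}$ from \eqref{e13} and $\xi$ from \eqref{e18}. However, there is a gap in how you claim to reach the full statement \eqref{b62}: Lemma \ref{l9} only ever delivers $H^1_z$ regularity in the normal variable (its conclusions \eqref{a14}, \eqref{a16} are capped at $H^{k_j}_xH^1_z$), so it cannot by itself produce $\lge z\rge^l c^{B,2}\in C([0,T];H^2_xH^3_z)$ or $\lge z\rge^l c^{B,2}_t\in L^2(0,T;H^2_xH^2_z)$. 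These require an extra bootstrapping step: solve the equation for $\partial_z^2 c^{B,2}$, i.e.
\begin{equation*}
\partial_z^2 c^{B,2}=c^{B,2}_t+z\,\ol{\partial_y u^0_2}\,\partial_z c^{B,2}+\ol{m^0}(\ol{c^0}+1)c^{B,2}-\Gamma+\ol{c^0}\Theta,
\end{equation*}
and iterate (and similarly for the time-differentiated equation), which in turn requires the additional source bounds $\lge z\rge^l\Gamma\in L^\infty_TH^3_xH^1_z$ and $\lge z\rge^l\ol{c^0}\Theta\in L^\infty_TH^3_xH^3_z$ that your proposal never establishes. This is exactly the paper's step \eqref{b60}--\eqref{b61}, mirroring \eqref{b52}--\eqref{b47} in Lemma \ref{l10}; without it the $H^3_z$ parts of \eqref{b62}, and consequently the $H^2_z$ (and implicitly higher) regularity of $\xi$ in \eqref{b63}, are unproved.

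A secondary issue is your index choice $(k_0,k_1,k_2)=(4,3,2)$. The hypothesis for $k_1$ demands $\lge z\rge^l\pt_t\rho\in L^2(0,T;H^{k_1}_xL^2_z)$, but $\Gamma_t$ contains products such as $\pt_t\ol{\partial_y u^0_1}\,\partial_x c^{B,1}$, and by Proposition \ref{p2} and the trace theorem $\pt_t\ol{\partial_y u^0_1}$ lies only in $C([0,T];H^2_x)$; hence $\Gamma_t$ is controlled in $H^2_xL^2_z$ but not in $H^3_xL^2_z$, so $k_1=3$ is not verifiable. The paper takes $k_1=2$, which still suffices because the $L^2(0,T;H^3_xL^2_z)$ bound on $c^{B,2}_t$ in \eqref{b62} already follows from the $j=0$ conclusion with $k_0=4$. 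Likewise, invoking the second-derivative hypothesis \eqref{a15} with $k_2=2$ is both unnecessary (the bound $\pt_t^2c^{B,2}\in L^2(0,T;H^2_xL^2_z)$ comes from the $j=1$ conclusion with $k_1=2$) and doubtful, since $\pt_t^2$ of the outer-profile traces appearing in $\Gamma$ has even lower $x$-regularity. You should drop $k_2$ and correct $k_1$ to $2$.
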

\begin{proof}
From Proposition \ref{p2} and the trace theorem, we have
\be\label{b50}
\begin{split}
\pt^i_t \ol{\pt_y m^{0}},\,\,\pt^i_t \ol{\pt_y c^{0}},\,\,\pt^i_t \ol{\pt_y u^{0}_1},\,\,\pt^i_t \ol{\pt_y^2 u^{0}_2}\in C([0,T];H^{4-2i}_x),\quad i=0,1.
\end{split}
\ee
It follows from \eqref{e14}, the Sobolev embedding inequality, \eqref{b54} and \eqref{b50} that
\be\label{b51}
\begin{split}
&\|\lge z\rge^l \Gamma\|_{L^\infty_TH^{4}_x L^2_z}\\
\leq& \|\ol{\partial_y u^{0}_1}\|_{L^\infty_TH^4_x}\|\lge z\rge^{l+1}c^{B,1}\|_{L^\infty_TH^5_xL^2_z}
+\|\ol{\partial^2_y u^{0}_2}\|_{L^\infty_TH^4_x}\|\lge z\rge^{l+2}c^{B,1}\|_{L^\infty_TH^4_xH^1_z}\\
&+\|\ol{\partial_y m^{I,0}}\|_{L^\infty_TH^4_x}\|\lge z\rge^{l}c^{B,1}\|_{L^\infty_TH^4_xL^2_z}
+\|\lge z\rge^{l+1}m^{B,1}\|_{L^\infty_TH^4_xL^2_z}\|\ol{\partial_y c^{0}}\|_{L^\infty_TH^4_x}\\
&+C\|\lge z\rge^{l}m^{B,1}\|_{L^\infty_TH^4_xH^1_z}\|c^{B,1}\|_{L^\infty_TH^4_xH^1_z}\\
\leq &C
\end{split}
\ee
for each $l\in \mathbb{N}$. By a similar argument used in attaining \eqref{b51}, one gets from \eqref{b54} and \eqref{b50} that
\be\label{b56}
\begin{split}
\|\lge z\rge^l \Gamma_t\|_{L^\infty_TH^{2}_x L^2_z}\leq C
\end{split}
\ee
for each $l\in \mathbb{N}$. For fixed $l\in \mathbb{N}$, it follows from the H\"{o}lder's inequality and \eqref{b54} that
\be\label{b57}
\begin{split}
&\big\|\lge z\rge^l\int_z^\infty m^{B,1}(x,\eta,t)d\eta \big\|_{L^\infty_TH^4_xL^2_z}\\
=&\sum_{j=0}^{4}\big\|\lge z\rge^l\int_z^\infty\pt^j_x m^{B,1}(x,\eta,t)d\eta \big\|_{L^\infty_TL^2_xL^2_z}\\
\leq & \sum_{j=0}^{4}\big\|\lge z\rge^{-1}\big[\int_z^\infty\lge \eta\rge^{2(l+1)}|\pt^j_x m^{B,1}(x,\eta,t)|^2 d\eta\big]^{\frac{1}{2}} \big\|_{L^\infty_TL^2_xL^2_z}\\ 
\leq & C\|\lge z\rge^{l+1} m^{B,1}\|_{L^\infty_TH^4_xL^2_z}.
\end{split}
\ee
In a similar fashion as above, one can estimate the other terms in \eqref{e15} to deduce that
\be\label{b58}
\|\lge z\rge^l\ol{c^0}\Theta\|_{L^\infty_TH^4_xH^1_z}+\|\lge z\rge^l (\ol{c^0}\Theta)_t\|_{L^\infty_TH^{2}_x H^1_z}\leq C
\ee
for each $l\in \mathbb{N}$.
With \eqref{b55}, \eqref{b51}-\eqref{b58} in hand, we employ Lemma \ref{l9} with $k_0=4$, $k_1=2$ to  \eqref{e12}, and derive that
\be\label{b59}
\begin{split}
\lge z\rge^{l}\pt^i_t c^{B,2}\in C([0,T];H^{4-2i}_{x}H^1_z),\quad \lge z\rge^{l}\pt^{i+1}_t c^{B,2}\in L^2(0,T;H^{4-2i}_{x}L^2_z), \quad i=0,1,
\end{split}
\ee
for each $l\in\mathbb{N}$. Moreover, \eqref{b54}, \eqref{b50} and similar arguments used in obtaining \eqref{b51} and \eqref{b58} further entail that
\ben
\|\lge z\rge^l\Gamma\|_{L^\infty_TH^3_xH^1_z}+\|\lge z\rge^l\ol{c^0}\Theta\|_{L^\infty_TH^3_xH^3_z}\leq C,\qquad \forall \,l\in \mathbb{N},
\enn
which, in conjunction with the first equation in \eqref{e12}, \eqref{b59} and a similar argument used in deriving \eqref{b47} gives
\be\label{b60}
\lge z\rge^{l}c^{B,2}\in C([0,T];H^{2}_{x}H^3_z),\qquad \forall\, l\in\mathbb{N}.
\ee
Differentiating the first equation in \eqref{e12} with respect to $t$, employing a similar argument used in attaining \eqref{b47} to the resulting equation and using \eqref{b56} and \eqref{b58}, one gets
\be\label{b61}
\lge z\rge^{l}c^{B,2}_t\in L^2(0,T;H^{2}_{x}H^2_z),\qquad \forall\, l\in\mathbb{N}.
\ee
\eqref{b62} follows from \eqref{b59}-\eqref{b61} and \eqref{e13}. Using \eqref{b54}, \eqref{b62} and applying a similar argument used in deriving \eqref{b57} to each term in \eqref{e18}, one gets \eqref{b63}. The proof is finished.

\end{proof}
\section{Proof of the main results}
This section is devoted to proving Theorem \ref{t1}. In Subsection 4.1, we first construct approximation solutions for $(m^\va,c^\va,\vec{u}^{\,\va},p^\va)$, then establish the well-posedness on remainders between $(m^\va,c^\va,\vec{u}^{\,\va},p^\va)$ and the approximation solutions. With the results derived in Subsection 4.1, we give the proof of Theorem \ref{t1} in Subsection 4.2. 
\subsection{Estimates on the remainders}
The approximation solutions are defined as:
\be\label{e21}
\begin{split}
M^a(x,y,t)=&m^{0}(x,y,t)+\va^{\frac{1}{2}}m^{B,1}(x,\frac{y}{\sqrt{\va}},t)
+\va m^{B,2}(x,\frac{y}{\sqrt{\va}},t)+\va^{\frac{3}{2}}\xi(x,\frac{y}{\sqrt{\va}},t),\\
C^a(x,y,t)=&c^{0}(x,y,t)+\va^{\frac{1}{2}}c^{B,1}(x,\frac{y}{\sqrt{\va}},t)
+\va c^{B,2}(x,\frac{y}{\sqrt{\va}},t),\\
P^{\,a}(x,y,t)=&p^{0}(x,y,t)+\va p^{B,2}(x,\frac{y}{\sqrt{\va}},t)
\end{split}
\ee
and the remainders are given as:
\be\label{e22}
\begin{split}
&M^\va(x,y,t)=\va^{-\frac{1}{2}}[m^\va (x,y,t)-M^a (x,y,t)],\qquad C^\va(x,y,t)=\va^{-\frac{1}{2}}[c^\va (x,y,t)-C^a (x,y,t)],\\
&\vec{U}^{\,\va}(x,y,t)=\va^{-\frac{1}{2}}[\vec{u}^{\,\va} (x,y,t)-\vec{u}^{\,0} (x,y,t)],\qquad \, \, \,\, P^\va(x,y,t)=\va^{-\frac{1}{2}}[p^\va (x,y,t)-P^{\,a} (x,y,t)].
\end{split}
\ee
Substituting \eqref{e22} into \eqref{e1}-\eqref{e2}, we deduce that the remainders $(M^\va,C^\va,\vec{U}^{\,\va},P^\va)(x,y,t)$ fulfill the following initial-boundary value problem:
\be\label{e23}
\left\{
\begin{array}{lll}
M^\va_t+\va^{\frac{1}{2}}\vec{U}^{\,\va}\cdot \nabla M^\va-\Delta M^\va
=-\vec{U}^{\,\va}\cdot \nabla M^a-\vec{u}^{\,0}\cdot \nabla M^\va+\va^{-\frac{1}{2}}f^\va\\
\qquad \qquad \qquad \qquad \qquad \quad \,\,\,\, -\nabla\cdot [\va^{\frac{1}{2}}M^\va \nabla C^\va+M^\va \nabla C^a+M^a \nabla C^\va],\\
C^\va_t+\va^{\frac{1}{2}}\vec{U}^{\,\va}\cdot \nabla C^\va-\va \Delta C^\va=-\vec{U}^{\,\va}\cdot \nabla C^a-\vec{u}^{\,0}\cdot \nabla C^\va+\va^{-\frac{1}{2}}g^\va\\
\qquad \qquad \qquad \qquad \qquad \quad\,\,\,-\va^{\frac{1}{2}}M^\va C^\va -M^\va C^a-M^a C^\va,\\
\vec{U}^{\,\va}_t+\va^{\frac{1}{2}}\vec{U}^{\,\va}\cdot \nabla \vec{U}^{\,\va}+\vec{U}^{\,\va}\cdot \nabla \vec{u}^{\,0}+\vec{u}^{\,0}\cdot \nabla \vec{U}^{\,\va}+\nabla P^\va+M^\va (0,\lambda)=\Delta \vec{U}^{\,\va}+\va^{-\frac{1}{2}}\vec{h}^{\,\va},\\
\nabla \cdot \vec{U}^{\,\va}=0,\\
(M^\va,C^\va,\vec{U}^{\,\va})(x,y,0)=(0,0,\mathbf{0}),\\
\partial_y M^\va(x,0,t)
=-\va^{\frac{1}{2}} \pt_z\xi(x,0,t),\quad
\partial_y C^\va(x,0,t)=0,\quad
\vec{U}^{\,\va}(x,0,t)=\mathbf{0},
\end{array}
\right.
\ee
where
\be\label{e24}
\begin{split}
f^\va(x,y,t):=&\Delta M^a(x,y,t)-M^a_t(x,y,t)-(\vec{u}^{\,0}\cdot\nabla M^a)(x,y,t)-\nabla\cdot(M^a\nabla C^a)(x,y,t),\\
g^\va (x,y,t):=&\va \Delta C^a(x,y,t)-C^a_t(x,y,t)-(\vec{u}^{\,0}\cdot\nabla C^a)(x,y,t)-(M^a C^a)(x,y,t),\\
\vec{h}^{\,\va}(x,y,t):=&\Delta \vec{u}^{\,0}(x,y,t)-\vec{u}^{\,0}_t(x,y,t)
-(\vec{u}^{\,0}\cdot\nabla \vec{u}^{\,0})(x,y,t)\\
&-\nabla P^{\,a}(x,y,t)-(0,\lambda M^a(x,y,t)).
\end{split}
\ee

We next give the derivation of the boundary conditions in \eqref{e23}. By the boundary conditions in \eqref{e7}, \eqref{e12} and $\pt_y c^{\va}(x,0,t)=0$, one deduces that
\be\label{b32}
\pt_y C^{a}(x,0,t)=0,\qquad \pt_y C^{\va}(x,0,t)=0.
\ee
It follows from \eqref{b33}, \eqref{b25}, \eqref{e6}, the boundary conditions in \eqref{e3}, \eqref{e7} and \eqref{e12} that
\ben
\begin{split}
&\pt_y m^{0}(x,0,t)+\pt_zm^{B,1}(x,0,t)=0,\\
& \pt_zm^{B,2}(x,0,t)=m^{0}(x,0,t)\pt_zc^{B,2}(x,0,t)+m^{B,1}(x,0,t)[\pt_yc^{0}+\pt_z c^{B,1}](x,0,t)=0, 
\end{split}
\enn
which, along with \eqref{e2} gives rise to
\be\label{b37}
\begin{split}
\pt_yM^{\va}(x,0,t)
=&-\va^{-\frac{1}{2}}\pt_y M^a(x,0,t)\\
&=-\va^{-\frac{1}{2}}[\pt_y m^{0}+\pt_zm^{B,1}](x,0,t)-\pt_zm^{B,2}(x,0,t)-\va^{\frac{1}{2}}\pt_z\xi(x,0,t)\\
&=-\va^{\frac{1}{2}}\pt_z\xi(x,0,t).
\end{split}
\ee
Collecting \eqref{b32} and \eqref{b37}, one derives the boundary conditions for $M^\va$ and $C^\va$.

The main result of this subsection is as follows. 
\begin{prop}\label{p3}
Suppose that $m_0\in H^6_{xy}$, $c_0, \vec{u}_{0}\in H^7_{xy}$ fulfill compatibility conditions $(A)$. Let $(m^0,c^0,\vec{u}^{\,0},$ $\na p^0)$ and $T_*$ be derived in Proposition \ref{p2} and $0<T<T_*$. Then there exists a constant $\va_T$ depending on $T$, given in \eqref{b48}, such that for each $0<\va<\va_T$, system \eqref{e23} admits a unique solution \be\label{b91}
(M^\va, C^\va, \vec{U}^{\,\va},\na P^\va)\in C([0,T];H^2_{xy}\times H^3_{xy} \times H^3_{xy}\times H^1_{xy})
\ee
 satisfying
\be\label{b86}
\begin{split}
\|M^\va\|_{L^\infty_TL^\infty_{xy}}+\|C^\va\|_{L^\infty_TL^\infty_{xy}}+\|\vec{U}^{\,\va}\|_{L^\infty_TH^2_{xy}}\leq C\va^{\frac{1}{8}}, 
\end{split}
\ee
and 
\be\label{b87}
\begin{split}
\|\vec{U}^{\,\va}\|_{L^\infty_TL^\infty_{xy}}\leq C\va^{\frac{1}{2}},\qquad
\|\na C^\va\|_{L^\infty_TL^\infty_{xy}}\leq C\va^{-\frac{3}{8}}, \qquad \|\na\vec{U}^{\,\va}\|_{L^\infty_TL^\infty_{xy}}\leq C\va^{\frac{1}{4}}.
\end{split}
\ee
\end{prop}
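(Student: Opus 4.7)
The plan is to prove Proposition~\ref{p3} via a local-existence result for the remainder system \eqref{e23} combined with uniform-in-$\va$ a priori estimates that extend the local solution up to time $T$. Since the equations for $M^\va$ and $\vec{U}^{\,\va}$ are genuinely parabolic with dissipation $-\Delta$, while only the $C^\va$ equation is degenerate through the vanishing-diffusion term $-\va\Delta C^\va$, local well-posedness on a short interval $[0,T_\va]$ can be obtained through a standard fixed-point argument in the Banach space $C([0,T_\va];H^2_{xy}\times H^3_{xy}\times H^3_{xy})$, treating the nonlinear couplings as small perturbations. The task then reduces to verifying that the bounds \eqref{b86}-\eqref{b87} hold on any interval of existence, so that a continuation criterion forces $T_\va$ to reach $T$ whenever $\va<\va_T$ for an appropriate threshold.

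The first step is to analyze the consistency errors $f^\va$, $g^\va$ and $\vec{h}^{\,\va}$ defined in \eqref{e24}. By the design of the expansions leading to \eqref{e3}-\eqref{e18}, substitution of $(M^a,C^a,\vec{u}^{\,0},P^{\,a})$ into \eqref{e1} produces only terms of high order in $\va^{1/2}$ after the leading cancellations. Using the bounds from Lemmas~\ref{l10}-\ref{l11} on $c^{B,j}$, $m^{B,j}$, $p^{B,2}$ and $\xi$, together with the weighted-in-$z$ norms (so that the change of variable $y=\sqrt{\va}\,z$ contributes a favorable Jacobian factor $\va^{1/4}$ when passing from $L^2_z$ to $L^2_y$), I would establish estimates of the form $\|\va^{-1/2}(f^\va,g^\va,\vec{h}^{\,\va})\|_{L^2_TH^k_{xy}}\leq C\va^{\gamma}$ for a positive gain $\gamma$ and appropriate regularity index $k$ dictated by the energy spaces below.

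Next I would derive coupled energy estimates for the three unknowns. For $\vec{U}^{\,\va}$, the standard $H^2$ Navier-Stokes energy method applies, with $M^\va$ and $\vec{h}^{\,\va}$ playing the role of external forcing and the pressure $\nabla P^\va$ eliminated via the divergence-free constraint; the unit-coefficient dissipation $-\Delta \vec{U}^{\,\va}$ then yields bounds uniform in $\va$. For $M^\va$, an $H^2$ parabolic estimate is obtained once the drifts $\vec{u}^{\,0}\cdot\nabla M^\va$ and $\vec{U}^{\,\va}\cdot\nabla M^a$ together with the chemotactic fluxes $\nabla\cdot(M^\va\nabla C^a+M^a\nabla C^\va)$ are controlled; the key observation is that while $\|M^a\|_{L^\infty}$ is uniform in $\va$, derivatives of $M^a$ grow like $\va^{-1/2}$ because of the boundary-layer terms $m^{B,j}(x,y/\sqrt{\va},t)$, and these powers must be compensated by the gains from the consistency errors. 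For $C^\va$, degeneracy of the diffusion forces a weighted/anisotropic energy method: testing against $C^\va$ gives $\|C^\va\|_{L^2_{xy}}$ and $\va^{1/2}\|\nabla C^\va\|_{L^2_{xy}}$, while higher-derivative estimates deteriorate with negative powers of $\va$. The $L^\infty$ bounds in \eqref{b86}-\eqref{b87} then follow from the two-dimensional Sobolev embedding $H^2_{xy}\hookrightarrow L^\infty_{xy}$ combined with Gagliardo-Nirenberg interpolation, producing in particular $\|\nabla C^\va\|_{L^\infty_{xy}}\lesssim \va^{-3/8}$.

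The main obstacle will be closing the nonlinear coupling between $M^\va$, whose estimate involves $\nabla C^\va$, and $C^\va$, whose gradient is controlled only with a negative power of $\va$. Absorbing the cross terms $M^\va\nabla C^a$ and $M^a\nabla C^\va$ requires exploiting the anisotropic structure of $C^a$: its boundary-layer component is concentrated near $y=0$ with width $\sqrt{\va}$, so integrals of its $y$-derivatives pick up favorable powers of $\va$ that offset the negative powers appearing elsewhere. A bootstrap argument built around the threshold $\va^{1/8}$ is then used to close all the coupled estimates on a time interval independent of $\va$; combined with Gronwall's inequality and the continuation criterion, this extends the local solution to $[0,T]$ and establishes the bounds \eqref{b86}-\eqref{b87} with the claimed regularity \eqref{b91}.
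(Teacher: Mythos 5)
Your overall architecture matches the paper's: local existence plus continuation, decay estimates for the consistency errors $f^\va,g^\va,\vec{h}^{\,\va}$ exploiting the Jacobian gain $\va^{1/4}$ from $y=\sqrt{\va}\,z$, coupled energy estimates closed by a bootstrap at the threshold $\va^{1/8}$, and Gagliardo--Nirenberg interpolation for the $L^\infty$ bounds. However, there is one step where your plan, as described, would fail: the uniform control of $\na C^\va$. You propose to handle $C^\va$ by testing its equation against $C^\va$ (which only yields $\|C^\va\|_{L^\infty_TL^2_{xy}}$ and $\va^{1/2}\|\na C^\va\|_{L^2_TL^2_{xy}}$, i.e. merely $\|\na C^\va\|_{L^2_TL^2_{xy}}\le C$ after dividing by $\va$), and you concede that higher-derivative estimates of $C^\va$ ``deteriorate with negative powers of $\va$.'' But closing the $M^\va$ energy estimate at the rate $\va^{1/2}$ requires $\|\na C^\va(t)\|_{L^2_{xy}}\le C\va^{1/2}$ \emph{pointwise in time}, because the chemotactic flux $\na\cdot(M^a\na C^\va)$ feeds $\na C^\va$ directly into the $M^\va$ equation with an $O(1)$ coefficient. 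The anisotropic structure of $C^a$ cannot rescue this, since the problematic term involves $\na C^\va$, not $\na C^a$.

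The paper's resolution, which is absent from your plan, is to set $\vec{V}^{\va}=\na C^\va$ and derive a separate evolution equation for it (equation \eqref{a10}) by differentiating the second equation of \eqref{e23}; this is essentially a damped transport equation for $\vec{V}^{\va}$, so a direct $L^2_{xy}$ energy estimate closes \emph{without} using the vanishing diffusion, yielding $\|\vec{V}^{\va}\|_{L^\infty_TL^2_{xy}}\le C\va^{1/2}$ alongside the bounds for $M^\va$, $C^\va$, $\vec{U}^{\,\va}$ (Lemmas \ref{l13}--\ref{l4}). The subsequent exponents in \eqref{b86}--\eqref{b87} also come from machinery you do not mention: time-derivative estimates for $M^\va_t$, $\vec{V}^{\va}_t$, $\vec{U}^{\,\va}_t$ (Lemmas \ref{l6}--\ref{l14}), the interpolation $\|\vec{V}^{\va}\|_{H^1_{xy}}^2\le 2\|\vec{V}^{\va}\|_{L^2_TH^1_{xy}}\|\vec{V}^{\va}_t\|_{L^2_TH^1_{xy}}\le C\va^{-1/2}$, and the recovery of $\|\vec{V}^{\va}\|_{H^2_{xy}}\le C\va^{-5/4}$ and $\|M^\va\|_{H^2_{xy}}\le C\va^{-1/4}$ directly from the equations, which combined with Gagliardo--Nirenberg give $\va^{-3/8}$ and $\va^{1/8}$. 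Without the $\vec{V}^{\va}$ equation the chain of estimates does not start, so this is a genuine gap rather than a stylistic difference.
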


Existence and uniqueness of solutions to system \eqref{e23} with regularity \eqref{b91} follows from the standard method used in \cite{Wang-Zhao13}, we omit its proof for brevity and proceed to deriving a series of \emph{a priori} estimate for the solutions in the following Lemma \ref{l13} - Lemma \ref{l8} to attain \eqref{b86} and \eqref{b87}. To this end, we next estimate $f^\va$, $g^\va$ and $\vec{h}^{\,\va}$. 

\begin{lemma}\label{l1} Let the assumptions in Proposition \ref{p3} hold. Then there exists a constant $C$ independent of $\va$, such that
\ben
\|f^\va\|_{L^\infty_TL^2_{xy}}\leq C\va^{\frac{5}{4}},\qquad \|f^\va_t\|_{L^2_TL^2_{xy}}\leq C\va^{\frac{5}{4}}.
\enn
\end{lemma}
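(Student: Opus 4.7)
The plan is to substitute the asymptotic ansatz \eqref{e21} into the definition \eqref{e24} of $f^\va$, reorganize everything into powers of $\va^{1/2}$, and cancel the leading orders using the constructed profile equations \eqref{e3}, \eqref{e7}--\eqref{e18}. The point of the construction is precisely that the residual after these cancellations sits at pointwise order $\va$ in the boundary layer region (and at order $\va^{3/2}$ or higher in the interior), so that the $L^2_{xy}$-norm gains the extra $\va^{1/4}$ factor coming from $\int_0^\infty g(y/\sqrt\va)^2\,dy=\sqrt\va\int_0^\infty g(z)^2\,dz$, producing the desired $\va^{5/4}$ bound.

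First I would split $M^a\!\nabla C^a$ and $\vec u^{\,0}\!\cdot\!\nabla M^a$ into outer-outer, outer-boundary, and boundary-boundary pieces. Whenever a smooth outer coefficient $a(x,y,t)\in\{m^0,c^0,\vec u^{\,0},\partial_ym^0,\partial_yc^0,\partial_yu^0_i,\ldots\}$ is multiplied against a boundary-layer profile $g(x,y/\sqrt\va,t)$, I would Taylor-expand $a$ at $y=0$:
\begin{equation*}
a(x,\sqrt\va z,t)=\bar a(x,t)+\sqrt\va\,z\,\overline{\partial_y a}(x,t)+\tfrac{\va}{2}z^2\overline{\partial_y^2a}(x,t)+\va^{3/2}R_a(x,z,t),
\end{equation*}
where $R_a$ is an integral remainder bounded in $L^\infty_T(L^\infty_x L^2(\langle z\rangle^{-N}dz))$ by $\|\partial_y^3 a\|_{L^\infty_T H^1_x L^\infty_y}$, which is finite by Proposition~\ref{p2} and the Sobolev/trace theorem. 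This converts all mixed terms into clean series in $\sqrt\va$ with coefficients that are products of $z^j$ and pure profiles. The pure outer residue vanishes by \eqref{e3}. The $\sqrt\va$-order residue of $f^\va$ is precisely the LHS$-$RHS of the equation satisfied by $(m^{B,1},c^{B,1})$ in \eqref{e7}--\eqref{e8}, hence cancels. The $\va$-order residue matches \eqref{e12}--\eqref{e15} for $(m^{B,2},c^{B,2})$, hence cancels. The $\va^{3/2}$-order residue of the bacterial equation, when integrated twice in $z$ from $\infty$ to produce an antiderivative formulation, is exactly the defining relation for $\xi$ in \eqref{e18}; this is the role of the $\va^{3/2}\xi$ term in $M^a$.

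What survives are (i) the Taylor remainders $R_a$, carrying an explicit $\va^{3/2}$ prefactor against a boundary-layer profile; (ii) the $\va^{3/2}\xi$ contribution in $\Delta M^a-M^a_t-\vec u^{\,0}\!\cdot\!\nabla M^a$ minus its cancellation, and its interaction with $C^a$ (which has no order-$\va^{3/2}$ companion in $C^a$), producing terms of the form $\va^{3/2}\,\xi\cdot(\text{profile})$ and $\va^{2}\,\xi\cdot(\text{profile})$; and (iii) purely outer $\va^2$-type leftovers from $\va\,c^{B,2}\cdot\va\,m^{B,2}$-type products. Each such term is a product of $\va^{k/2}$ ($k\ge 3$) and a profile $F(x,y/\sqrt\va,t)$ with $\langle z\rangle^lF\in L^\infty_T(H^{k_0}_xL^2_z)$ by Lemmas~\ref{l10}--\ref{l11}, so
\begin{equation*}
\|\va^{k/2}F(x,y/\sqrt\va,t)\|_{L^2_{xy}}=\va^{k/2}\cdot\va^{1/4}\|F(x,z,t)\|_{L^2_{xz}}\le C\va^{(k+1/2)/2}\le C\va^{5/4}
\end{equation*}
whenever $k\ge 3$. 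For Taylor-remainder pieces like $\va^{3/2}z^2R_a\cdot(\text{profile})$, the extra polynomial weight is absorbed by the weighted bounds $\|\langle z\rangle^l(\cdot)\|$ in Lemma~\ref{l10}, \ref{l11}. Summing the finitely many contributions gives $\|f^\va\|_{L^\infty_TL^2_{xy}}\le C\va^{5/4}$.

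For the time derivative, I would differentiate $f^\va$ in $t$ before substitution, note that $\partial_t$ commutes with the whole cancellation scheme (the equations \eqref{e3}, \eqref{e7}, \eqref{e12} and the definition \eqref{e18} are all linear in a way that is preserved under $\partial_t$, and $\xi_t$ is handled by Lemma~\ref{l11}), and apply the identical estimation strategy using the $\partial_t$-regularity of outer profiles from Proposition~\ref{p2} and of boundary-layer profiles from Lemmas~\ref{l10}--\ref{l11}. The resulting bounds live in $L^2_T L^2_{xy}$ because the sharp temporal regularity of $\partial_t^2$ of the profiles is only $L^2_T$ (not $L^\infty_T$) in the relevant spaces, and again yield the factor $\va^{5/4}$. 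The principal obstacle is purely bookkeeping: one must track every cross-term produced by expanding a quadratic nonlinearity of a four-term $M^a$ against a three-term $C^a$, confirm that each cancellation matches one of the profile equations exactly, and verify that no surviving term has a prefactor lower than $\va^{3/2}$ in the pointwise boundary-layer scaling.
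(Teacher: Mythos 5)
Your proposal follows essentially the same route as the paper: substitute the ansatz into \eqref{e24}, cancel the orders $\va^{-1/2},\va^{0},\va^{1/2}$ using the profile relations for $m^{B,1}$, $m^{B,2}$ and $\xi$, Taylor-expand the outer coefficients at $y=0$ so the surviving terms are integral remainders of pointwise order $\va$ in the layer, and convert the extra powers of $y$ into weighted $z$-norms together with the scaling gain $\|F(x,y/\sqrt{\va},t)\|_{L^2_y}=\va^{1/4}\|F\|_{L^2_z}$; the time derivative is handled identically with the $L^2_T$ regularity of $\pt_t$ of the profiles. This is exactly the paper's argument (its decomposition into $K_1,\dots,K_9$ and estimates such as \eqref{a2}, \eqref{b64}), so the proposal is correct.
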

\begin{proof}
By the change of variables $z=\frac{y}{\sqrt{\va}}$ and direct computations, one deduces that
\ben
\begin{split}
\Delta M^a(x,y,t)=& \Delta m^{0}+\va^{-\frac{1}{2}}\partial^2_z m^{B,1}+\partial^2_z m^{B,2}+\va^{\frac{1}{2}}\partial^2_z \xi+\va^{\frac{1}{2}}\partial^2_x m^{B,1}
\\
&+\va \partial^2_x m^{B,2}+\va^{\frac{3}{2}}\partial^2_x \xi
\end{split}
\enn
and
\ben
\begin{split}
-\nabla\cdot(&M^a\nabla C^a)\\
=&-\nabla\cdot (m^{0}\nabla c^{0})
-[\partial_y m^{0}\partial_z c^{B,1}
+m^{0}\partial^2_z c^{B,2}
+\partial_z m^{B,1}\partial_y c^{0}
+\partial_z(m^{B,1}\partial_z c^{B,1})]\\
&-\va^{-\frac{1}{2}}m^{0}\partial^2_z c^{B,1}
-\va^{\frac{1}{2}}[\partial_x(m^{0}\partial_x c^{B,1}+m^{B,1}\partial_x c^{0})
+\partial_y m^{0}\partial_z c^{B,2}
+m^{B,1}\partial^2_y c^{0}]\\
&-\va^{\frac{1}{2}}[\partial_z m^{B,2}\partial_y c^{0}
+\partial_z(m^{B,1}\partial_z c^{B,2}
+m^{B,2}\partial_z c^{B,1})
]
-\va(m^{B,2}\partial_y^2 c^{0}
+\partial_z \xi\partial_y c^{0})
\\
&-\va \partial_x(m^{0}\partial_x c^{B,2}
+m^{B,2}\partial_x c^{0}+m^{B,1}\partial_x c^{B,1})
-\va
\partial_z(\xi\partial_z c^{B,1}
+m^{B,2}\partial_z c^{B,2}
)
\\
&-\va^{\frac{3}{2}}\partial_x(m^{B,1}\partial_x c^{B,2}
+m^{B,2}\partial_x c^{B,1}
+\xi\partial_x c^{0}
)
-\va^\frac{3}{2}\pt_z(\xi\partial_z c^{B,2})\\
&-\va^{\frac{3}{2}}
 \xi\partial^2_y c^{0}
-\va^{2}\partial_x(m^{B,2}\partial_x c^{B,2}+\xi\partial_x c^{B,1}
)
-\va^{\frac{5}{2}}\pt_x(\xi\partial_x c^{B,2})
\end{split}
\enn
and
\ben
\begin{split}
-u^{\,0}\cdot\nabla M^a=&-u^{\,0}\cdot \nabla m^{0}
-u^{0}_2 \partial_z m^{B,1}
-\va^{\frac{1}{2}}[u^{0}_1\partial_xm^{B,1}+u^{0}_2\partial_z m^{B,2}]\\
&-\va (u^{0}_1\partial_x m^{B,2}
+ u^{0}_2\partial_z \xi)
-\va^{\frac{3}{2}}u^{0}_1\partial_x \xi
\end{split}
\enn
and
\ben
-M^a_t=-m^{0}_t-\va^{\frac{1}{2}}m^{B,1}_t-\va m^{B,2}_t-\va^{\frac{3}{2}}\xi_t.
\enn
Substituting the above four identities into \eqref{e24} and using 
\eqref{b29}, \eqref{b30}, \eqref{b28} and the change of variables $z=\frac{y}{\sqrt{\va}}$, one gets after rearrangement that
\ben
\begin{split}
f^{\va}=&\va^{-\frac{1}{2}}(\ol{m^{0}}-m^{0}+y\ol{\pt_ym^{0}}
+\frac{y^2}{2}\ol{\pt_y^2m^{0}})\pt_z^2c^{B,1}
+(\ol{\pt_ym^{0}}-\pt_ym^{0}
+y\ol{\pt_y^2m^{0}})\pt_zc^{B,1}\\
&+(\ol{m^{0}}-m^{0}+y\ol{\pt_ym^{0}}
)\pt_z^2c^{B,2}
+\pt_z m^{B,1}(\ol{\pt_yc^{0}}-\pt_yc^{0}
+y\ol{\pt_y^2c^{0}})\\
&+(\ol{u^{0}_2}-u^{0}_2+y\ol{\pt_yu^{0}_2}
)\pt_zm^{B,1}+\va^{\frac{1}{2}}\pt_x[m^{B,1}(\ol{\pt_xc^{0}}-\pt_xc^{0})+(\ol{m^{0}}-m^{0})\pt_x c^{B,1}]\\
&+\va^{\frac{1}{2}}(\ol{\pt_ym^{0}}-\pt_ym^{0})\pt_zc^{B,2}
+\va^{\frac{1}{2}}\pt_z m^{B,2}(\ol{\pt_yc^{0}}-\pt_yc^{0})
+\va^{\frac{1}{2}}m^{B,1}(\ol{\pt_y^2c^{0}}-\pt_y^2c^{0})\\
&
+\va^{\frac{1}{2}}(\ol{u^{0}_1}-u^{0}_1)\pt_x m^{B,1}
+\va^{\frac{1}{2}}(\ol{u^{0}_2}-u^{0}_2)\pt_z m^{B,2}
-\va(m^{B,2}\partial_y^2 c^{0}
+\partial_z \xi\partial_y c^{0})\\
&-\va \partial_x(m^{0}\partial_x c^{B,2}
+m^{B,2}\partial_x c^{0}+m^{B,1}\partial_x c^{B,1})
-\va
\partial_z(\xi\partial_z c^{B,1}
+m^{B,2}\partial_z c^{B,2}
)\\
&+\va \partial^2_x m^{B,2}-\va (u^{0}_1\partial_x m^{B,2}
+ u^{0}_2\partial_z \xi)-\va m^{B,2}_t+\va^{\frac{3}{2}}\partial^2_x \xi
-\va^{\frac{3}{2}}
 \partial_z (
\xi\partial_z c^{B,2})
\\
&-\va^{\frac{3}{2}}\partial_x(m^{B,1}\partial_x c^{B,2}
+m^{B,2}\partial_x c^{B,1}
+\xi\partial_x c^{0}
)
-\va^{\frac{3}{2}}
 \xi\partial^2_y c^{0}
 -\va^{\frac{3}{2}}\xi_t\\
 &-\va^{\frac{3}{2}}u^{0}_1\partial_x \xi
 -\va^{2}\partial_x(m^{B,2}\partial_x c^{B,2}+\xi\partial_x c^{B,1}
)
-\va^{\frac{5}{2}}\partial_x(
\xi\partial_x c^{B,2})
\\
:=&\sum_{i=1}^{9}K_i,
\end{split}
\enn
where $K_i$ represents the entirety of the $i$-th line in the above expression. By the change of variables $y=\va^{1/2}z$, Taylor's formula, the Sobolev embedding inequality, Proposition \ref{p2} and Lemma \ref{l10}, one gets
\be\label{a2}
\begin{split}
\|&(\ol{\pt_ym^{0}}-\pt_ym^{0}
+y\ol{\pt_y^2m^{0}})\pt_zc^{B,1}\|_{L^\infty_TL^2_{xy}}\\
=&
\va\Big\|\frac{\pt_y m^{0}(x,y,t)-\pt_y m^{0}(x,0,t)-y\pt_y^2 m^{0}(x,0,t)}{y^2}\cdot z^2\pt_zc^{B,1}\Big\|_{L^\infty_TL^2_{xy}}
\\
\leq &2 \va \|\pt_y^3 m^{0}\|_{L^\infty_TL^\infty_{xy}}\,\|z^2\pt_zc^{B,1}\|_{L^\infty_TL^2_{xy}}\\
\leq &C \va^{\frac{5}{4}} \| m^{0}\|_{L^\infty_TH^5_{xy}}\,\|\lge z\rge^2 c^{B,1}\|_{L^\infty_TL^2_{x}H^1_z}\\
\leq &C\va^{\frac{5}{4}}.
\end{split}
\ee
One can estimate the remaining part in $K_1$ by a similar argument used in deriving \eqref{a2} to deduce that
\ben
\begin{split}
\|K_1\|_{L^\infty_TL^2_{xy}}\leq& C\va^{\frac{5}{4}} \| m^{0}\|_{L^\infty_TH^5_{xy}}\|\lge z\rge^3c^{B,1} \|_{L^\infty_TL^2_{x}H^2_z}
+C\va^{5/4} \| m^{0}\|_{L^\infty_TH^5_{xy}}\|\lge z\rge^2 c^{B,1}\|_{L^\infty_TL^2_{x}H^1_z}\\
\leq &C\va^{\frac{5}{4}}.
\end{split}
\enn
Applying the arguments used in estimating $\|K_1\|_{L^\infty_TL^2_{xy}}$ to $K_2$- $K_4$ we get
\ben
\|K_2\|_{L^\infty_TL^2_{xy}}+\|K_3\|_{L^\infty_TL^2_{xy}}
+\|K_4\|_{L^\infty_TL^2_{xy}}
\leq C\va^{\frac{5}{4}}.
\enn
We next estimate the third term in $K_5$. By the change of variables $y=\va^{1/2}z$, Soblev embedding inequality, Proposition \ref{p2} and Lemma \ref{l11}, we have
\be\label{a3}
\begin{split}
\va\|m^{B,2}\partial_y^2 c^{0}+\pt_z\xi \pt_yc^0\|_{L^\infty_TL^2_{xy}}
\leq& \va\|m^{B,2}\|_{L^\infty_TL^2_{xy}}\|\partial_y^2 c^{0}\|_{L^\infty_TL^\infty_{xy}}+\va\|\pt_z \xi\|_{L^\infty_TL^2_{xy}}\|\partial_y c^{0}\|_{L^\infty_TL^\infty_{xy}}\\
\leq& C\va^{\frac{5}{4}}\| m^{B,2}\|_{L^\infty_TL^2_{xz}}\|c^{0}\|_{L^\infty_TH^4_{xy}}
+C\va^{\frac{5}{4}}\| \xi\|_{L^\infty_TL^2_{z}H^1_z}\|c^{0}\|_{L^\infty_TH^3_{xy}}\\
\leq &C\va^{\frac{5}{4}}.
\end{split}
\ee
One can estimate the other terms in $K_5$ by a similar argument used in deriving \eqref{a2} to deduce that
\ben
\va^{\frac{1}{2}}\|(\ol{u^{0}_1}-u^{0}_1)\pt_x m^{B,1}\|_{L^\infty_TL^2_{xy}}
+\va^{\frac{1}{2}}\|(\ol{u^{0}_2}-u^{0}_2)\pt_z m^{B,2}\|_{L^\infty_TL^2_{xy}}
\leq C\va^{\frac{5}{4}}, 
\enn
which, along with \eqref{a3} gives rise to
\be\label{a73}
\|K_5\|_{L^\infty_TL^2_{xy}}\leq C\va^{\frac{5}{4}}.
\ee
Similar arguments used in attaining \eqref{a3} and the assumption $0<\va<1$ lead to
\ben
\|K_6\|_{L^\infty_TL^2_{xy}}+\|K_7\|_{L^\infty_TL^2_{xy}}
+\|K_8\|_{L^\infty_TL^2_{xy}}+\|K_{9}\|_{L^\infty_TL^2_{xy}}
\leq C\va^{5/4}.
\enn
Collecting the above estimates for $K_1$- $K_{9}$, one immediately gets
\be\label{a57}
\|f^\va\|_{L^\infty_TL^2_{xy}}\leq C\va^{\frac{5}{4}}.
\ee

By a similar argument used in deriving \eqref{a2}, one gets
\be\label{b64}
\begin{split}
\|&\pt_t[(\ol{\pt_ym^{0}}-\pt_ym^{0}
+y\ol{\pt_y^2m^{0}})\pt_zc^{B,1}]\|_{L^2_TL^2_{xy}}\\
\leq &2 \va \|\pt_y^3 m^{0}_t\|_{L^2_TL^\infty_{xy}}\,\|z^2\pt_zc^{B,1}\|_{L^\infty_TL^2_{xy}}+2 \va \|\pt_y^3 m^{0}\|_{L^\infty_TL^\infty_{xy}}\,\|z^2\pt_zc^{B,1}_t\|_{L^2_TL^2_{xy}}\\
\leq &C \va^{\frac{5}{4}} \| m^{0}_t\|_{L^2_TH^5_{xy}}\,\|\lge z\rge^2 c^{B,1}\|_{L^\infty_TL^2_{x}H^1_z}
+C \va^{\frac{5}{4}} \| m^{0}\|_{L^\infty_TH^5_{xy}}\,\|\lge z\rge^2 c^{B,1}_t\|_{L^2_TL^2_{x}H^1_z}\\
\leq &C\va^{\frac{5}{4}}.
\end{split}
\ee
In a similar fashion in attaining \eqref{b64}, we deduce that
\ben
\|\pt_t K_i\|_{L^2_TL^2_{xy}}\leq C\va^{\frac{5}{4}},\quad i=1,2,\cdots,9,
\enn
which, gives rise to
\be\label{b65}
\|f^\va_t\|_{L^2_TL^2_{xy}}\leq \sum_{i=1}^{9}\|\pt_t K_i\|_{L^2_TL^2_{xy}}\leq C\va^{\frac{5}{4}}.
\ee
\eqref{b65}, along with \eqref{a57} completes the proof.

\end{proof}

\begin{lemma}\label{l2}
Let the assumptions in Proposition \ref{p3} hold. Then
there exists a constant $C$ independent of $\va$, such that
\ben
\|g^\va\|_{L^\infty_TL^2_{xy}}\leq C\va,\qquad \|\nabla g^\va\|_{L^\infty_TL^2_{xy}}\leq C\va,\qquad
\|\nabla g^\va_t\|_{L^2_TL^2_{xy}}\leq C\va^{\frac{3}{4}}.
\enn
\end{lemma}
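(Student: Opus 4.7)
The plan is to mirror the strategy of Lemma \ref{l1}: substitute the definitions of $C^a$ and $M^a$ from \eqref{e21} into the formula for $g^\va$ in \eqref{e24}, expand everything using the change of variables $z=y/\sqrt{\va}$ (so that $\pt_y = \va^{-1/2}\pt_z$ on boundary layer terms), and collect contributions by powers of $\va^{1/2}$. Three exact cancellations then occur: the $O(1)$ terms vanish because $c^0$ solves the second equation in \eqref{e3}; the leading $O(\va^{1/2})$ piece vanishes after Taylor-expanding $u^0_2$, $m^0$, $c^0$ around $y=0$ (using $\vec{u}^0|_{y=0}=\mathbf{0}$ and \eqref{e8}) and invoking the equation \eqref{e7} for $c^{B,1}$; the leading $O(\va)$ piece vanishes after a further Taylor expansion, combined with the definitions of $\Gamma$ and $\Theta$ in \eqref{e14}--\eqref{e15} and the equation \eqref{e12} for $c^{B,2}$, together with \eqref{e13}. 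After these three cancellations, the residual $g^\va$ decomposes into a finite sum of terms of one of two types: either explicit $\va^{3/2}$ or $\va^2$ prefactors multiplying smooth outer profiles and boundary layer profiles (coming from the pure $\va\Delta C^a$ tangential pieces and the higher-order products in $M^a C^a$), or Taylor remainders of the form $y^k\bigl(\text{outer derivative}\bigr)\cdot\bigl(\text{BL profile}\bigr)$ with $k\geq 2$.

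To bound these remainders in $L^\infty_T L^2_{xy}$ at order $\va$, I will estimate each one using the representation $\|F(x,y,t)\|_{L^2_{xy}} = \va^{1/4}\|F(x,\sqrt{\va}z,t)\|_{L^2_{xz}}$ (the Jacobian of the change of variables contributes $\va^{1/4}$ to the $L^2$ norm), combined with Taylor's formula $|y^{-k}(f(x,y,t)-f(x,0,t)-\cdots)|\lesssim \|\pt_y^{k+1}f\|_{L^\infty_y}$ to trade a factor $y^k = \va^{k/2}z^k$ for a polynomial weight in $z$. The weighted norms $\|\langle z\rangle^l c^{B,j}\|$, $\|\langle z\rangle^l m^{B,j}\|$ and $\|\langle z\rangle^l \xi\|$ are all controlled uniformly in $\va$ by Lemmas \ref{l10} and \ref{l11}, while the outer-layer regularity comes from Proposition \ref{p2}. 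Each residual term is thus estimated individually, and direct counting of the $\va$ powers shows each is $O(\va)$; summing finitely many of them yields $\|g^\va\|_{L^\infty_T L^2_{xy}}\leq C\va$.

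For $\|\nabla g^\va\|_{L^\infty_T L^2_{xy}}\leq C\va$, I will apply $\pt_x$ or $\pt_y$ to the same residual expression. Differentiation $\pt_x$ preserves the $\va$-order since the weighted regularity of $c^{B,1}, c^{B,2}, \xi, m^{B,j}$ in $x$ is one derivative higher than needed (again via Lemmas \ref{l10}--\ref{l11}). The delicate case is $\pt_y$ landing on a boundary-layer factor, which produces the chain rule factor $\va^{-1/2}\pt_z$; however, because each residual term already has Taylor weight $y^k$ with $k\geq 2$, when $\pt_y$ strikes the BL factor rather than the outer Taylor remainder, the surviving power is $y^k\va^{-1/2}\pt_z(\cdot) = \va^{(k-1)/2}z^k\pt_z(\cdot)$, which still carries a surplus of $\va^{1/2}$ after accounting for the $\va^{1/4}$ from the Jacobian, leaving order $\va$; when $\pt_y$ strikes the Taylor remainder, it simply reduces the Taylor order by one but drops no $\va$ power because the combinatorics always leave one factor of $y$ still available.

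For the final estimate $\|\nabla g^\va_t\|_{L^2_T L^2_{xy}} \leq C\va^{3/4}$, I will further differentiate in $t$ and integrate over $[0,T]$. The time derivative uses $c^{B,1}_t, c^{B,2}_t, \xi_t$, whose weighted regularity comes from the $L^2_T$-parts of Lemmas \ref{l10}--\ref{l11}; the $L^\infty_T\to L^2_T$ change and the loss of one Taylor order from $\pt_t$ (compared with $\pt_y$, which benefits from the chain-rule $\va^{-1/2}$ conversion only once) together account for the downgrade from $\va$ to $\va^{3/4}$. The main obstacle throughout is bookkeeping: writing the residual decomposition of $g^\va$ cleanly enough that each of its many pieces can be matched against a Taylor-weighted estimate of the right order, and verifying that the weighted space regularity supplied by Lemmas \ref{l10} and \ref{l11} is sharp enough to absorb all derivatives appearing after $\nabla$ and $\pt_t$ are applied.
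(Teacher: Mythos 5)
Your proposal is correct and follows essentially the same route as the paper: rewrite $g^\va$ using the outer equation in \eqref{e3} and the boundary-layer equations \eqref{e7}, \eqref{e12} (with \eqref{e8}, \eqref{e13}) to exhibit the cancellations, reduce the residue to Taylor remainders plus explicitly higher-order terms, and estimate each via the change of variables $z=y/\sqrt{\va}$ (Jacobian factor $\va^{1/4}$), Taylor's formula, and the weighted bounds of Lemmas \ref{l10}--\ref{l11} together with Proposition \ref{p2}. Your bookkeeping is slightly off in places — not every Taylor remainder has order $k\geq 2$ (e.g. the $\va(m^0-\ol{m^0})c^{B,2}$-type terms have $k=1$ but carry a compensating $\va$ prefactor), the term $\va\Delta c^0$ is exactly $O(\va)$ and is what saturates the bound, and $\pt_y$ landing on a Taylor remainder does cost a factor $\va^{1/2}$ (it is absorbed by the surplus, not avoided) — but none of this affects the conclusion.
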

\begin{proof}
Using \eqref{e3} and the change of variables $z=\frac{y}{\sqrt{\va}}$, we derive from \eqref{e24} that
\ben
\begin{split}
g^\va=&\va^{\frac{1}{2}}\pt_z^2c^{B,1}-\va^{\frac{1}{2}}c^{B,1}_t-u^{0}_2\pt_zc^{B,1}
-\va^{\frac{1}{2}}u^{0}_1\pt_xc^{B,1}-\va^{\frac{1}{2}}u^{0}_2\pt_zc^{B,2}
-\va^{\frac{1}{2}}m^{0}c^{B,1}-\va^{\frac{1}{2}}m^{B,1}c^{0}\\
&+\va\pt_z^2c^{B,2}-\va c^{B,2}_t-\va u^{0}_1\pt_xc^{B,2}
-\va m^{0}c^{B,2}- \va m^{B,1}c^{B,1}
-\va m^{B,2}c^{0}\\
&+\va \Delta c^{0}+\va^{\frac{3}{2}}(\pt^2_x c^{B,1}
-m^{B,1}c^{B,2}-m^{B,2}c^{B,1}-\xi c^{0})\\
&+\va^2(\pt^2_x c^{B,2}
- m^{B,2}c^{B,2}-\xi c^{B,1})
-\va^{\frac{5}{2}} \xi c^{B,2}\\
:=&\sum_{i=1}^4 P_i,
\end{split}
\enn
where $P_i$ represents the entirety of the $i$-th line. From \eqref{e7}, \eqref{e12}, the boundary conditions in \eqref{e3} and the change of variables $z=\frac{y}{\sqrt{\va}}$, one deduces that
\be\label{b66}
\begin{split}
P_1+P_2=&-(u^{0}_2-\ol{u^{0}_2}-y\ol{\pt_y u^{0}_2}-\frac{y^2}{2}\ol{\pt_y^2 u^{0}_2})\pt_z c^{B,1}
-\va^{\frac{1}{2}}(m^{0}-\ol{m^{0}}-y\ol{\pt_y m^{0}}) c^{B,1}\\
&-\va^{\frac{1}{2}}m^{B,1}(c^{0}-\ol{c^{0}}-y\ol{\pt_y c^{0}})
-\va^{\frac{1}{2}}(u^{0}_1-\ol{u^{0}_1}-y\ol{\pt_y u^{0}_1})\pt_x c^{B,1}\\
&-\va^{\frac{1}{2}}(u^{0}_2-\ol{u^{0}_2}-y\ol{\pt_y u^{0}_2})\pt_z c^{B,2}
-\va (u^{0}_1-\ol{u^{0}_1})\pt_xc^{B,2}\\
&-\va (m^{0}-\ol{m^{0}})c^{B,2}-\va m^{B,2}(c^{0}-\ol{c^{0}}),
\end{split}
\ee
where
\be\label{a6}
\begin{split}
\|&-(u^{0}_2-\ol{u^{0}_2}-y\ol{\pt_y u^{0}_2}-\frac{y^2}{2}\ol{\pt_y^2 u^{0}_2})\pt_z c^{B,1}\|_{L^\infty_TL^2_{xy}}\\
\leq &6\va^{\frac{3}{2}}\Big\|
\frac{u^{0}_2-\ol{u^{0}_2}-y\ol{\pt_y u^{0}_2}-\frac{y^2}{2}\ol{\pt_y^2 u^{0}_2}}{y^3}
\Big\|_{L^\infty_TL^\infty_{xy}}\,\|z^3 \pt_z c^{B,1}\|_{L^\infty_TL^2_{xy}}\\
\leq & C\va^{\frac{7}{4}} \|\na^3 u^{0}_2\|_{L^\infty_TL^\infty_{xy}}\,\|\lge z \rge^3 \pt_z c^{B,1}\|_{L^\infty_TL^2_{xz}}\\
\leq & C\va^{\frac{7}{4}} \|u^{0}_2\|_{L^\infty_TH^5_{xy}}\,\|\lge z \rge^3 c^{B,1}\|_{L^\infty_TL^2_{x}H^1_z}\\
\leq & C\va^{\frac{7}{4}}.
\end{split}
\ee
By a similar argument used in deriving \eqref{a6}, one can estimate the other terms in $P_1+P_2$ to deduce that
\be\label{a7}
\begin{split}
\|P_1+P_2\|_{L^\infty_TL^2_{xy}}
\leq  C\va^{\frac{7}{4}}.
\end{split}
\ee
Similar arguments used in attaining \eqref{a3} and the assumption $0<\va <1$ lead to
\be\label{a8}
\|P_3\|_{L^\infty_TL^2_{xy}}
\leq  C\va,\qquad
\|P_4\|_{L^\infty_TL^2_{xy}}
\leq  C\va^{\frac{9}{4}}.
\ee
Collecting \eqref{a7} and \eqref{a8}, we obtain
\be\label{a63}
\|g^\va\|_{L^\infty_TL^2_{xy}}
\leq  C\va.
\ee
A direct computation and similar arguments used in deriving \eqref{a7}-\eqref{a8} and the assumption $0<\va <1$ yield
\ben
\|\na(P_1+P_2)\|_{L^\infty_TL^2_{xy}}
\leq  C\va^{\frac{5}{4}},\qquad \|\na P_3\|_{L^\infty_TL^2_{xy}}
\leq  C\va,\qquad \|\na P_4\|_{L^\infty_TL^2_{xy}}
\leq  C\va^{\frac{7}{4}}.
\enn
Then it follows from the above estimates that
\be\label{a62}
\|\na g^\va\|_{L^\infty_TL^2_{xy}}
\leq  C\va.
\ee

We proceed to estimate $\|\na g^\va_t\|_{L^2_TL^2_{xy}}$. 
Indeed, it follows from Taylor's formula, the change of variables $z=\frac{y}{\sqrt{\va}}$, Sobolev embedding inequality, Proposition \ref{p2} and Lemma \ref{l10} that
\be\label{b67}
\begin{split}
&\|\pt_y\pt_t(u^{0}_2-\ol{u^{0}_2}-y\ol{\pt_y u^{0}_2}-\frac{y^2}{2}\ol{\pt_y^2 u^{0}_2})\pt_z c^{B,1}\|_{L^2_TL^2_{xy}}\\
=&\|\pt_t(\pt_yu^{0}_2-\ol{\pt_y u^{0}_2}-y\ol{\pt_y^2 u^{0}_2})\pt_z c^{B,1}\|_{L^2_TL^2_{xy}}\\
\leq &
\va^{\frac{1}{2}}\left\|\frac{(\pt_t\pt_yu^{0}_2-\ol{\pt_t\pt_yu^{0}_2})}{y}\right\|_{L^2_TL^\infty_{xy}}\|z\pt_zc^{B,1}\|_{L^\infty_TL^2_{xy}}
+\va^{\frac{1}{2}}\|\ol{\pt_t\pt_y^2 u^{0}_2}\|_{L^2_TL^\infty_x}\|z\pt_zc^{B,1}\|_{L^\infty_TL^2_{xy}}
\\
\leq &C\va^{\frac{3}{4}}\|\na\pt_t\pt_yu^{0}_2\|_{L^2_TL^\infty_{xy}}
\|\lge z\rge c^{B,1}\|_{L^\infty_TL^2_{x}H^1_z}
+C\va^{\frac{3}{4}}\|\pt_tu^{0}_2\|_{L^2_TH^4_{xy}}
\|\lge z\rge c^{B,1}\|_{L^\infty_TL^2_{x}H^1_z}
\\
\leq &C\va^{\frac{3}{4}}\|\pt_t u^{0}_2\|_{L^2_TH^4_{xy}}
\|\lge z\rge c^{B,1}\|_{L^\infty_TL^2_{x}H^1_z}
\\
\leq &C\va^{\frac{3}{4}}.
\end{split}
\ee
By a similar argument used in attaining \eqref{b67}, one deduces that 
\ben
\|\pt_x\pt_t(u^{0}_2-\ol{u^{0}_2}-y\ol{\pt_y u^{0}_2}-\frac{y^2}{2}\ol{\pt_y^2 u^{0}_2})\pt_z c^{B,1}\|_{L^2_TL^2_{xy}}
\leq &C\va^{\frac{3}{4}},
\enn
which, along with \eqref{b67} yields
\be\label{b68}
\|\na\pt_t(u^{0}_2-\ol{u^{0}_2}-y\ol{\pt_y u^{0}_2}-\frac{y^2}{2}\ol{\pt_y^2 u^{0}_2})\pt_z c^{B,1}\|_{L^2_TL^2_{xy}}
\leq C\va^{\frac{3}{4}}.
\ee
In a similar fashion in attainting \eqref{b68}, one can estimate the other terms in $\|\na\pt_t[(u^{0}_2-\ol{u^{0}_2}-y\ol{\pt_y u^{0}_2}-\frac{y^2}{2}\ol{\pt_y^2 u^{0}_2})\pt_z c^{B,1}]\|_{L^2_TL^2_{xy}}$ to get
\be\label{b69}
\|\na\pt_t[(u^{0}_2-\ol{u^{0}_2}-y\ol{\pt_y u^{0}_2}-\frac{y^2}{2}\ol{\pt_y^2 u^{0}_2})\pt_z c^{B,1}]\|_{L^2_TL^2_{xy}}
\leq C\va^{\frac{3}{4}}.
\ee
By an analogous argument used in deriving \eqref{b69}, we estimate the other terms in $\na\pt_t(P_1+P_2)$, $\na\pt_tP_3$ and $\na\pt_tP_4$ to find that
\ben
\|\na\pt_t(P_1+P_2)\|_{L^2_{T}L^2_{xy}}+\|\na\pt_tP_3\|_{L^2_{T}L^2_{xy}}+\|\na\pt_tP_4\|_{L^2_{T}L^2_{xy}}\leq  C\va^{\frac{3}{4}}.
\enn
Thus,
\ben
\|\na g^\va_t\|_{L^2_TL^2_{xy}}
\leq  C\va^{\frac{3}{4}},
\enn
which, in conjunction with \eqref{a62} and \eqref{a63} completes the proof.

\end{proof}

\begin{lemma}\label{l3} Suppose that the assumptions in Proposition \ref{p3} holds true. 
Then there exists a constant $C$ independent of $\va$ such that
\ben
\|\vec{h}^{\,\va}\|_{L^\infty_TL^2_{xy}}\leq  C\va^{\frac{5}{4}},\qquad \|\vec{h}^{\,\va}_t\|_{L^\infty_TL^2_{xy}}\leq C\va^{\frac{5}{4}},\qquad \|\vec{h}^{\,\va}\|_{L^\infty_TH^1_{xy}}\leq C\va^{\frac{3}{4}}.
\enn
\end{lemma}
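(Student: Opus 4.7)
My plan is to exploit the fact that $(m^0,c^0,\vec{u}^{\,0},\nabla p^0)$ solves \eqref{e3}, which will allow me to eliminate almost every term in $\vec{h}^{\,\va}$ and reduce it to a small, explicit expression involving only boundary layer profiles. Indeed, the third equation of \eqref{e3} gives $\Delta\vec{u}^{\,0}-\vec{u}^{\,0}_t-\vec{u}^{\,0}\cdot\nabla\vec{u}^{\,0}=\nabla p^0+m^0(0,\lambda)$, and substituting into \eqref{e24} yields
\[
\vec{h}^{\,\va}=\nabla(p^0-P^{\,a})-(0,\lambda(M^a-m^0))=-\va\nabla\bigl[p^{B,2}(x,\tfrac{y}{\sqrt\va},t)\bigr]-\bigl(0,\lambda(\va^{1/2}m^{B,1}+\va m^{B,2}+\va^{3/2}\xi)\bigr).
\]
Using the chain rule $\partial_y f(x,y/\sqrt\va,t)=\va^{-1/2}\partial_z f$ together with the key identity $\partial_z p^{B,2}=-\lambda m^{B,1}$ (which is exactly the defining property of $p^{B,2}$ in \eqref{e11}), the $O(\va^{1/2})$ contributions in the second component cancel precisely, and I obtain
\[
\vec{h}^{\,\va}(x,y,t)=\bigl(-\va\,\partial_x p^{B,2}(x,\tfrac{y}{\sqrt\va},t),\ -\lambda\va\,m^{B,2}(x,\tfrac{y}{\sqrt\va},t)-\lambda\va^{3/2}\xi(x,\tfrac{y}{\sqrt\va},t)\bigr).
\]

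Once this explicit form is in hand, the estimates become a routine change-of-variables computation. For any function $F(x,z,t)$ with $z=y/\sqrt\va$, one has $\|F(x,y/\sqrt\va,t)\|_{L^2_{xy}}=\va^{1/4}\|F\|_{L^2_{xz}}$, which produces the gain of a factor $\va^{1/4}$. Applied to each term above, this gives the bound $C\va^{5/4}$ for $\|\vec{h}^{\,\va}\|_{L^\infty_T L^2_{xy}}$, provided $\partial_x p^{B,2}$, $m^{B,2}$, and $\xi$ are in $L^\infty_T L^2_{xz}$, which follows from the regularity \eqref{b62}, \eqref{b63}, and \eqref{b70} established in Lemmas \ref{l10}--\ref{l11}. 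Differentiating in $t$ and arguing identically yields the same bound $C\va^{5/4}$ for $\|\vec{h}^{\,\va}_t\|_{L^\infty_T L^2_{xy}}$ after invoking the $L^\infty_T$ regularity of $\partial_t p^{B,2}$, $m^{B,2}_t$, and $\xi_t$.

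For the $H^1$ bound, applying $\partial_x$ does not alter the scaling and still yields $C\va^{5/4}$. The worse estimate comes from $\partial_y$: since $\partial_y$ acting on a boundary-layer profile produces a factor $\va^{-1/2}$ through the chain rule, the three terms in $\vec{h}^{\,\va}$ become of order $\va^{1/2}\partial_z\partial_x p^{B,2}$, $\va^{1/2}\partial_z m^{B,2}$, and $\va\,\partial_z\xi$ in the rescaled variables, so after the change-of-variables gain of $\va^{1/4}$ I obtain $C\va^{3/4}$, which is sharp and dictates the final estimate. The required $L^\infty_T L^2_{xz}$ control on $\partial_z\partial_x p^{B,2}$, $\partial_z m^{B,2}$ and $\partial_z\xi$ is again supplied by Lemmas \ref{l10}--\ref{l11}.

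The main difficulty is conceptual rather than computational: one has to notice the exact cancellation built into the definition of $p^{B,2}$ in \eqref{e11} that kills the would-be $\va^{1/2}$ leading term in the vertical component of $\vec{h}^{\,\va}$; without this, the Navier--Stokes remainder equation in \eqref{e23} could not be closed at the order $\va^{-1/2}\vec{h}^{\,\va}$. Once this cancellation is used, everything else reduces to the anisotropic scaling identity $\|F(y/\sqrt\va)\|_{L^2_y}=\va^{1/4}\|F\|_{L^2_z}$ applied term by term, so the remainder of the proof is purely bookkeeping.
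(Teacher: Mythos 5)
Your proposal is correct and follows essentially the same route as the paper: reduce $\vec{h}^{\,\va}$ via the third equation of \eqref{e3} and the cancellation $\pt_z p^{B,2}=-\lambda m^{B,1}$ from \eqref{e11} to the explicit form $-(\va\pt_x p^{B,2},\,\lambda\va m^{B,2}+\lambda\va^{3/2}\xi)$, then apply the anisotropic scaling $\|F(x,y/\sqrt{\va},t)\|_{L^2_{xy}}=\va^{1/4}\|F\|_{L^2_{xz}}$ together with the regularity from Lemmas \ref{l10}--\ref{l11}. The exponents $\va^{5/4}$ and $\va^{3/4}$ (the latter dictated by the $\va^{-1/2}$ loss from $\pt_y$) match the paper's computation exactly.
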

\begin{proof}
By \eqref{e3}, \eqref{e11} and the change of variables $z=\frac{y}{\sqrt{\va}}$, we derive from \eqref{e24} that
\be\label{a9}
\begin{split}
\vec{h}^{\,\va}=&\Delta \vec{u}^{\,0}-\pt_t\vec{u}^{\,0}
-\vec{u}^{\,0}\cdot\na \vec{u}^{\,0}-\na p^{0}-m^{0}(0,\lambda)
-(\va\pt_x p^{B,2},\va^{\frac{1}{2}}\pt_z p^{B,2})\\
&-\va^{\frac{1}{2}}m^{B,1}(0,\lambda)
-\va m^{B,2}(0,\lambda)
-\va^{\frac{3}{2}}\xi(0,\lambda)\\
=&-(\va\pt_x p^{B,2}, \lambda\va m^{B,2}+\lambda\va^{\frac{3}{2}}  \xi).
\end{split}
\ee
A direct computation along with the change of variables $z=\frac{y}{\sqrt{\va}}$ and Lemma \ref{l10}-Lemma \ref{l11} leads to
\ben
\begin{split}
\|-\pt_x p^{B,2}\|_{L^\infty_TL^2_{xy}}=\va^{\frac{1}{4}}\|\pt_x p^{B,2}\|_{L^\infty_TL^2_{xz}}
\leq C\va^{\frac{1}{4}},\qquad
\|m^{B,2}\|_{L^\infty_TL^2_{xy}}
\leq C\va^{\frac{1}{4}},\qquad \|\xi\|_{L^\infty_TL^2_{xy}}
\leq C\va^{\frac{1}{4}}.
\end{split}
\enn
Substituting the above estimates into \eqref{a9} and using the assumption $0<\va<1$, we obtain
\be\label{a42}
\|\vec{h}^{\,\va}\|_{L^\infty_TL^2_{xy}}
\leq C\va^{\frac{5}{4}}.
\ee
By a similar argument in deriving \eqref{a42}, one gets
\be\label{b71}
\|\vec{h}_t^{\,\va}\|_{L^\infty_TL^2_{xy}}
\leq C\va^{\frac{5}{4}}.
\ee
From \eqref{a9} and the change of variables $z=\frac{y}{\sqrt{\va}}$, we know that
\be\label{a43}
\begin{split}
&\pt_y \vec{h}^{\,\va}=-(\va^{\frac{1}{2}}\pt_x \pt_z p^{B,2}, \lambda\va^{\frac{1}{2}} \pt_zm^{B,2}+\lambda\va  \pt_z \xi),\\
&\pt_x \vec{h}^{\,\va}=-(\va\pt_x^2 p^{B,2}, \lambda\va \pt_xm^{B,2}+\lambda\va^{\frac{3}{2}}  \pt_x \xi).
\end{split}
\ee
By the assumption $0<\va<1$ and a similar argument used in attaining \eqref{a42}, we estimate each term in \eqref{a43} to deduce that
\ben
\begin{split}
\|\na \vec{h}^{\,\va}\|_{L^\infty_T L^2_{xy}}
\leq& \|\pt_y \vec{h}^{\,\va}\|_{L^\infty_T L^2_{xy}}+\|\pt_x \vec{h}^{\,\va}\|_{L^\infty_T L^2_{xy}}\\
\leq& \va^{\frac{3}{4}}(\| p^{B,2}\|_{L^\infty_T H^2_{xz}}+\lambda\| m^{B,2}\|_{L^\infty_T H^1_{xz}})
+\lambda\va^{\frac{5}{4}}\|\xi\|_{L^\infty_T H^1_{xz}}\\
\leq& C\va^{\frac{3}{4}},
\end{split}
\enn
which, along with \eqref{a42} and \eqref{b71} gives the desired estimate. 
The proof is finished.

\end{proof}

\begin{lemma}\label{l13} Let the assumptions in Proposition \ref{p3} hold.
Then there exists a constant $C$ independent of $\va$, such that
\be\label{a24}
\begin{split}
\frac{d}{dt}\|M^\va\|_{L^2_{xy}}^2+\frac{5}{4}\|\na M^\va\|_{L^2_{xy}}^2
\leq&
C\va \|M^\va\|_{L^\infty_{xy}}^2
\|\na C^{\va}\|_{L^2_{xy}}^2
+C(\|M^\va\|_{L^2_{xy}}^2+\|\vec{U}^{\,\va}\|_{L^2_{xy}}^2+\|\na C^{\va}\|_{L^2_{xy}}^2)\\
&+C\va+\va^{-1}\|f^\va\|_{L^2_{xy}}^2.
\end{split}
\ee
\end{lemma}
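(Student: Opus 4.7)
The plan is to perform an $L^2$ energy estimate on the first equation of \eqref{e23}: multiply by $M^\va$, integrate over $\mathbb{R}^2_+$, and integrate by parts, paying close attention to boundary contributions at $y=0$. The two convection terms $\va^{1/2}\!\int\vec{U}^{\,\va}\cdot\nabla M^\va\, M^\va$ and $\int\vec{u}^{\,0}\cdot\nabla M^\va\, M^\va$ vanish: the interior part is zero by $\nabla\cdot\vec{U}^{\,\va}=\nabla\cdot\vec{u}^{\,0}=0$, and the boundary trace is zero because $\vec{U}^{\,\va}|_{y=0}=\vec{u}^{\,0}|_{y=0}=\mathbf{0}$. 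The Laplacian contributes $\|\nabla M^\va\|_{L^2_{xy}}^2-\va^{1/2}\!\int_{\mathbb R} \pt_z\xi(x,0,t)\,M^\va(x,0,t)\,dx$, by the Neumann datum in \eqref{e23}. Integration by parts in the three flux divergences produces the interior cross-terms $\va^{1/2}\!\int M^\va\nabla C^\va\cdot\nabla M^\va+\int(M^\va\nabla C^a+M^a\nabla C^\va)\cdot\nabla M^\va$; the attendant boundary integrals at $y=0$ drop out because their vertical parts all contain $\pt_y C^\va$ or $\pt_y C^a$, both of which vanish there (as recorded in \eqref{b32}: for $C^\va$ this is the boundary condition in \eqref{e23}, and for $C^a$ it follows from $(A)$ together with $\pt_z c^{B,1}|_{z=0}=-\ol{\pt_y c^0}$ from \eqref{e7} and $\pt_z c^{B,2}|_{z=0}=0$ from \eqref{e12}).

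For the interior cross-terms I would use H\"older and Young. The term $\va^{1/2}\!\int M^\va\nabla C^\va\cdot\nabla M^\va$ generates the characteristic $C\va\|M^\va\|_{L^\infty_{xy}}^2\|\nabla C^\va\|_{L^2_{xy}}^2$ after absorbing $\tfrac18\|\nabla M^\va\|_{L^2_{xy}}^2$. The terms $\int M^\va\nabla C^a\cdot\nabla M^\va$ and $\int M^a\nabla C^\va\cdot\nabla M^\va$ are handled using the uniform bounds $\|M^a\|_{L^\infty_TL^\infty_{xy}}+\|\nabla C^a\|_{L^\infty_TL^\infty_{xy}}+\|\nabla M^a\|_{L^\infty_TL^\infty_{xy}}\le C$, which follow from the explicit decomposition \eqref{e21} combined with Proposition \ref{p2} and the weighted regularity of $m^{B,j}, c^{B,j}, \xi$ supplied by Lemmas \ref{l10}--\ref{l11} (each boundary-profile derivative evaluated at $z=y/\sqrt{\va}$ sits in $L^\infty_x H^1_z$, hence in $L^\infty_{xy}$ uniformly in $\va$). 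These contribute $C(\|M^\va\|_{L^2_{xy}}^2+\|\nabla C^\va\|_{L^2_{xy}}^2)+\tfrac14\|\nabla M^\va\|_{L^2_{xy}}^2$. The drift source $-\!\int\vec{U}^{\,\va}\cdot\nabla M^a\, M^\va$ yields $C(\|\vec{U}^{\,\va}\|_{L^2_{xy}}^2+\|M^\va\|_{L^2_{xy}}^2)$, and $\va^{-1/2}\!\int f^\va M^\va$ yields $\tfrac12\va^{-1}\|f^\va\|_{L^2_{xy}}^2+\tfrac12\|M^\va\|_{L^2_{xy}}^2$ by Cauchy--Schwarz and Young.

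The main technical step is the inhomogeneous boundary remainder $\va^{1/2}\!\int \pt_z\xi(x,0,t)\,M^\va(x,0,t)\,dx$. I would bound it by $\va^{1/2}\|\pt_z\xi(\cdot,0,t)\|_{L^2_x}\|M^\va(\cdot,0,t)\|_{L^2_x}$, then invoke $\|\pt_z\xi(\cdot,0,t)\|_{L^2_x}\le C$ (from Lemma \ref{l11}, which places $\xi$ in $C([0,T];H^3_xH^2_z)$ with exponential $z$-weights, combined with a one-dimensional trace in $z$), along with the standard two-dimensional trace inequality $\|M^\va(\cdot,0,t)\|_{L^2_x}^2\le C\|M^\va\|_{L^2_{xy}}(\|M^\va\|_{L^2_{xy}}+\|\nabla M^\va\|_{L^2_{xy}})$. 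Young's inequality then dominates this contribution by $C\va+C\|M^\va\|_{L^2_{xy}}^2+\tfrac18\|\nabla M^\va\|_{L^2_{xy}}^2$.

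Summing all contributions and multiplying through by $2$ produces $\frac{d}{dt}\|M^\va\|_{L^2_{xy}}^2+2\|\nabla M^\va\|_{L^2_{xy}}^2$ on the left; the total amount to be absorbed from the right into the gradient norm is at most $\tfrac34\|\nabla M^\va\|_{L^2_{xy}}^2$, leaving precisely the coefficient $\tfrac54$ claimed in \eqref{a24}. The hard part throughout is the boundary-trace management: it is what forces the nonhomogeneous $C\va$ term on the right of \eqref{a24} and what couples the estimate to the weighted higher-regularity results of Section 3. Everything else is bookkeeping with H\"older/Young together with the uniform-in-$\va$ $L^\infty$ control of the approximate profiles from \eqref{e21}.
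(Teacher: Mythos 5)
Your proposal is correct and follows essentially the same route as the paper: the same $L^2$ energy identity with the same term-by-term treatment (vanishing convection terms by incompressibility and zero traces, the flux boundary integrals killed by $\pt_yC^\va=\pt_yC^a=0$ at $y=0$, Young's inequality on the cross terms, and the inhomogeneous $\pt_z\xi$ boundary remainder producing the $C\va$ term). The only cosmetic differences are that the paper controls the boundary remainder via Sobolev embedding in $y$ and $z$ separately rather than your trace inequality, bounds $\nabla M^a$ in $L^4_{xy}$ rather than $L^\infty_{xy}$, and uses slightly smaller Young constants ($1/16$ per term) so that the absorbed gradient mass actually stays below $3/4$ after doubling — with your stated constants ($1/8+1/4+1/8$) it would not, but this is fixed by tightening them.
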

\begin{proof}
Testing the first equation of \eqref{e23} with $M^\va$ in $L^2_{xy}$, using integration by parts and \eqref{b32} to get
\be\label{a21}
\begin{split}
\frac{1}{2}&\frac{d}{dt}\|M^\va\|_{L^2_{xy}}^2+\|\na M^\va\|_{L^2_{xy}}^2\\
=&-\va^{\frac{1}{2}}\int_0^\infty\int_{-\infty}^\infty \vec{U}^{\,\va}\cdot \na M^\va M^\va dxdy
-\int_0^\infty\int_{-\infty}^\infty \vec{U}^{\,\va}\cdot \na M^a M^\va dxdy\\
&-\int_0^\infty\int_{-\infty}^\infty \vec{u}^{\,0}\cdot \na M^\va M^\va dxdy
+\va^{\frac{1}{2}}\int_0^\infty\int_{-\infty}^\infty M^{\va} \na C^\va \cdot\na M^\va dxdy\\
&+\int_0^\infty\int_{-\infty}^\infty M^{\va} \na C^a \cdot\na M^\va dxdy
+\int_0^\infty\int_{-\infty}^\infty M^{a} \na C^\va \cdot\na M^\va dxdy
\\
&+\va^{\frac{1}{2}}\int_{-\infty}^\infty \pt_z\xi(x,0,t)\cdot M^\va (x,0,t) dx
+\va^{-\frac{1}{2}}\int_0^\infty\int_{-\infty}^\infty M^{\va} f^\va dxdy\\
:=&\sum_{i=1}^{8}G_i.
\end{split}
\ee
It follows from integration by parts, the facts $\na \cdot \vec{U}^{\,\va}=\na\cdot \vec{u}^{\,0}=0$ and $\vec{U}^{\,\va}(x,0,t)=\vec{u}^{\,0}(x,0,t)=\mathbf{0}$ that
\ben
\begin{split}
G_1=\frac{1}{2}\va^{\frac{1}{2}}\int_0^\infty \int_{-\infty}^{\infty} \na \cdot \vec{U}^{\,\va}|M^\va|^2 \,dxdy=0,
\qquad G_3=\frac{1}{2}\int_0^\infty \int_{-\infty}^{\infty} \na \cdot \vec{u}^{\,0}|M^\va|^2 \,dxdy=0.
\end{split}
\enn
By the Sobolev embedding inequality, change of variables $z=\frac{y}{\sqrt{\va}}$, the assumption $0<\va<1$, Proposition \ref{p2} and Lemma \ref{l10}-Lemma \ref{l11}, one gets
\be\label{a35}
\begin{split}
\|\na M^a(t)\|_{L^4_{xy}}\leq & \|\na m^{0}\|_{L^\infty_TL^4_{xy}}+\va^{\frac{1}{8}}(\va^{\frac{1}{2}}\|\pt_xm^{B,1}\|_{L^\infty_TL^4_{xz}}
+\|\pt_zm^{B,1}\|_{L^\infty_TL^4_{xz}}
+\va\|\pt_xm^{B,2}\|_{L^\infty_TL^4_{xz}})\\
&+\va^{\frac{1}{8}}(\va^{\frac{1}{2}}\|\pt_zm^{B,2}\|_{L^\infty_TL^4_{xz}}
+\va^{\frac{3}{2}}\|\pt_x \xi\|_{L^\infty_TL^4_{xz}}+\va\|\pt_z\xi\|_{L^\infty_TL^4_{xz}})\\
\leq &C(\|m^{0}\|_{L^\infty_TH^2_{xy}}+\|m^{B,1}\|_{L^\infty_TH^2_{xz}}
+\|m^{B,2}\|_{L^\infty_TH^2_{xz}}+\|\xi\|_{L^\infty_TH^2_{xz}})\\
\leq &C
\end{split}
\ee
and
\be\label{a34}
\begin{split}
\|\na C^a(t)\|_{L^\infty_{xy}}\leq C(\|c^{0}\|_{L^\infty_TH^3_{xy}}+\|c^{B,1}\|_{L^\infty_TH^3_{xz}}
+\|c^{B,2}\|_{L^\infty_TH^3_{xz}})
\leq C
\end{split}
\ee
and
\be\label{b72}
\begin{split}
\|M^a(t)\|_{L^\infty_{xy}}
\leq& C(\|m^{0}\|_{L^\infty_TH^2_{xy}}+\|m^{B,1}\|_{L^\infty_TH^2_{xz}}
+\|m^{B,2}\|_{L^\infty_TH^2_{xz}}+\|\xi\|_{L^\infty_TH^2_{xz}})
\leq C,\\
\| C^a(t)\|_{L^\infty_{xy}}\leq& C(\|c^{0}\|_{L^\infty_TH^2_{xy}}+\|c^{B,1}\|_{L^\infty_TH^2_{xz}}
+\|c^{B,2}\|_{L^\infty_TH^2_{xz}})
\leq C
\end{split}
\ee
for each $t\in [0,T]$.
Then it follows from \eqref{a35} that
\ben
\begin{split}
G_2
\leq \|\vec{U}^{\,\va}\|_{L^2_{xy}}
\|\na M^a\|_{L^4_{xy}}
\|M^\va\|_{L^4_{xy}}
\leq&C \|\vec{U}^{\,\va}\|_{L^2_{xy}}
\|M^\va\|_{L^2_{xy}}^{\frac{1}{2}}\|\na M^\va\|_{L^2_{xy}}^{\frac{1}{2}}\\
\leq&\frac{1}{16}\|\na M^\va\|_{L^2_{xy}}^2+\|M^\va\|_{L^2_{xy}}^2+C \|\vec{U}^{\,\va}\|_{L^2_{xy}}^2
.
\end{split}
\enn
The Cauchy-Schwarz inequality yields that
\ben
\begin{split}
G_4
\leq
\frac{1}{16}\|\na M^\va\|_{L^2_{xy}}^2
+C\va\|M^\va\|_{L^\infty_{xy}}^2\|\na C^\va\|_{L^2_{xy}}^2.
\end{split}
\enn
It follows from the Sobolev embedding inequality and \eqref{b72} that
\ben
\begin{split}
G_5\leq& \|M^\va\|_{L^2_{xy}}
\|\na C^a\|_{L^\infty_{xy}}\|\na M^\va\|_{L^2_{xy}}
\leq \frac{1}{16}\|\na M^\va\|_{L^2_{xy}}^2
+C\|M^\va\|_{L^2_{xy}}^2,\\
G_6
\leq&
\|\na M^\va\|_{L^2_{xy}}
\|M^a\|_{L^\infty_{xy}}\|\na C^\va\|_{L^2_{xy}}
\leq 
\frac{1}{16}\|\na M^\va\|_{L^2_{xy}}^2
+C\|\na C^\va\|_{L^2_{xy}}^2
.
\end{split}
\enn
We deduce from the Sobolev embedding inequality that
\be\label{b34}
\begin{split}
G_7
\leq \va^{\frac{1}{2}}\int_{-\infty}^\infty \|\pt_z\xi\|_{L^\infty_z}\cdot\| M^\va\|_{L^\infty_y} dx
\leq& \va^{\frac{1}{2}}\int_{-\infty}^\infty \|\xi\|_{H^2_z}\cdot\| M^\va\|_{H^1_y} dx\\
\leq& C\va^{\frac{1}{2}}\|\xi\|_{L^2_xH^2_{z}}\|M^\va\|_{H^1_{xy}}\\
\leq &\frac{1}{16}\|\na M^\va\|_{L^2_{xy}}^2+\|M^\va\|_{L^2_{xy}}^2+C\va\|\xi\|_{L^2_xH^2_{z}}^2.
\end{split}
\ee
A direct computation gives
\ben
G_8\leq \|M^\va\|_{L^2_{xy}}^2+\va^{-1}\|f^\va\|_{L^2_{xy}}^2.
\enn
Inserting the above estimates for $G_1$-$G_8$ into \eqref{a21} and using Lemma \ref{l11}, one derives \eqref{a24}.
The proof is finished.

\end{proof}

Denote
$\vec{V}^{\va}=\na C^\va$. Then from the second equation of \eqref{e23}, we have
\be\label{a10}
\begin{split}
&\vec{V}^{\,\va}_t+\va^{\frac{1}{2}}\na (\vec{U}^{\,\va}\cdot\vec{V}^{\,\va})+\na (\vec{U}^{\,\va}\cdot\na C^{a})
+\na (\vec{u}^{\,0}\cdot\vec{V}^{\,\va})+\va^{\frac{1}{2}}\na M^\va C^\va +\va^{\frac{1}{2}}M^\va \vec{V}^{\,\va}
\\
=&\va \Delta \vec{V}^{\,\va}+\va^{-\frac{1}{2}}\na g^\va -M^\va \na C^a-\na M^\va C^a -\na M^{a} C^\va -M^a\vec{V}^{\,\va}.
\end{split}
\ee
For $\vec{V}^{\,\va}$ we have the following estimates.
\begin{lemma} Suppose that the assumptions in Proposition \ref{p3} holds true. 
Then there is a constant $C$ independent of $\va$, such that
\be\label{a20}
\begin{split}
&\frac{d}{dt}\|\vec{V}^{\,\va}\|_{L^2_{xy}}^2+\va \|\na\vec{V}^{\,\va}\|_{L^2_{xy}}^2\\
\leq 
&\|\na \vec{U}^{\,\va}\|_{L^2_{xy}}^2
  +C(\|\na\vec{U}^{\,\va}\|_{L^2_{xy}}^2+\va^{\frac{1}{2}} \|M^\va \|_{L^\infty_{xy}}+\va
  \|C^\va\|_{L^2_{xy}}^2+1)\|\vec{V}^{\,\va}\|_{L^2_{xy}}^2
  \\
  &+\frac{1}{4}\|\na M^\va\|_{L^2_{xy}}^2+C(\|M^{\va} \|_{L^2_{xy}}^2 +\|\vec{U}^{\,\va}\|_{L^2_{xy}}^2+\|C^{\va} \|_{L^2_{xy}}^2)+\va^{-1}\|\na g^\va\|_{L^2_{xy}}^2.
\end{split}
\ee
\end{lemma}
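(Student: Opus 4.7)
The plan is to take the $L^2_{xy}$ inner product of \eqref{a10} with $\vec{V}^{\,\va}$ and estimate every resulting term via H\"{o}lder and Young inequalities, the Ladyzhenskaya/Gagliardo--Nirenberg interpolation $\|f\|_{L^4_{xy}}^2\le C\|f\|_{L^2_{xy}}\|\na f\|_{L^2_{xy}}$, and the uniform-in-$\va$ $L^\infty_{xy}$ bounds on $M^a$, $C^a$, $\na M^a$, $\na C^a$ already recorded in the proof of Lemma \ref{l13} (see \eqref{a34}--\eqref{b72}). The time derivative supplies $\tfrac{1}{2}\tfrac{d}{dt}\|\vec{V}^{\,\va}\|_{L^2_{xy}}^2$ and integration by parts of $\va\int\Delta\vec{V}^{\,\va}\cdot\vec{V}^{\,\va}\,dxdy$ produces the coercive $-\va\|\na\vec{V}^{\,\va}\|_{L^2_{xy}}^2$; the boundary contribution at $y=0$ vanishes because the Neumann condition $\pt_yC^\va(x,0,t)=0$ forces both $V_2^\va(x,0,t)=0$ and, by tangential differentiation, $\pt_yV_1^\va(x,0,t)=\pt_x\pt_yC^\va(x,0,t)=0$. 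The source $\va^{-1/2}\na g^\va$ contributes the clean $\va^{-1}\|\na g^\va\|_{L^2}^2$ after Young.

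Next I would dispatch the convective trilinears. Expanding $\na(\vec{U}^{\,\va}\!\cdot\!\vec{V}^{\,\va})$ and $\na(\vec{u}^{\,0}\!\cdot\!\vec{V}^{\,\va})$ by the product rule, the halves carrying the derivative on $\vec{V}^{\,\va}$ vanish after one integration by parts thanks to $\na\!\cdot\!\vec{U}^{\,\va}=\na\!\cdot\!\vec{u}^{\,0}=0$ and $\vec{U}^{\,\va}|_{y=0}=\vec{u}^{\,0}|_{y=0}=\mathbf{0}$; the remaining halves are bounded by $\va^{1/2}\|\na\vec{U}^{\,\va}\|_{L^2}\|\vec{V}^{\,\va}\|_{L^4}^2\le\tfrac{\va}{8}\|\na\vec{V}^{\,\va}\|^2+C\|\na\vec{U}^{\,\va}\|^2\|\vec{V}^{\,\va}\|^2$ and $C\|\na\vec{u}^{\,0}\|_{L^\infty}\|\vec{V}^{\,\va}\|^2$ respectively. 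The algebraic couplings $M^\va\na C^a\cdot\vec{V}^{\,\va}$, $\na M^a\,C^\va\cdot\vec{V}^{\,\va}$, $M^a|\vec{V}^{\,\va}|^2$ and $\va^{1/2}M^\va|\vec{V}^{\,\va}|^2$ yield $C(\|M^\va\|^2+\|C^\va\|^2+\|\vec{V}^{\,\va}\|^2)+C\va^{1/2}\|M^\va\|_{L^\infty}\|\vec{V}^{\,\va}\|^2$ via the $L^\infty_{xy}$ bounds on $M^a$, $\na M^a$ and $\na C^a$. The term $\na M^\va\,C^a\cdot\vec{V}^{\,\va}$ absorbs into $\tfrac{1}{8}\|\na M^\va\|^2+C\|\vec{V}^{\,\va}\|^2$ since $\|C^a\|_{L^\infty}\le C$, and the remaining $\va^{1/2}\na M^\va\,C^\va\cdot\vec{V}^{\,\va}$ is treated by H\"{o}lder followed by the Ladyzhenskaya bound $\|C^\va\vec{V}^{\,\va}\|_{L^2}^2\le C\|C^\va\|_{L^2}\|\vec{V}^{\,\va}\|_{L^2}^2\|\na\vec{V}^{\,\va}\|_{L^2}$ (valid since $\na C^\va=\vec{V}^{\,\va}$), producing the $\tfrac{1}{8}\|\na M^\va\|^2$, the $\tfrac{\va}{8}\|\na\vec{V}^{\,\va}\|^2$ and the $C\va\|C^\va\|^2\|\vec{V}^{\,\va}\|^2$ contribution to the coefficient of $\|\vec{V}^{\,\va}\|^2$ in \eqref{a20}.

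The main obstacle is the transport-like term $\int\na(\vec{U}^{\,\va}\!\cdot\!\na C^a)\cdot\vec{V}^{\,\va}\,dxdy$. After the product rule it splits into $\int[(\na\vec{U}^{\,\va})^T\na C^a]\cdot\vec{V}^{\,\va}$ and $\int[(\na^2C^a)\vec{U}^{\,\va}]\cdot\vec{V}^{\,\va}$. The first integral is benign because $\|\na C^a\|_{L^\infty}\le C$ from \eqref{a34}, and Young yields $\tfrac{1}{2}\|\na\vec{U}^{\,\va}\|^2+C\|\vec{V}^{\,\va}\|^2$. The second is delicate because the worst component, $\pt_y^2C^a=\pt_y^2c^0+\va^{-1/2}\pt_z^2c^{B,1}(x,y/\sqrt\va,t)+\pt_z^2c^{B,2}(x,y/\sqrt\va,t)$, blows up in $L^\infty_{xy}$ like $\va^{-1/2}$. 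I would isolate the singular summand $\va^{-1/2}\pt_z^2c^{B,1}\,U_2^\va V_2^\va$ and exploit the Hardy-type inequalities $\|U_2^\va/y\|_{L^2_y}\le 2\|\pt_yU_2^\va\|_{L^2_y}$ and $\|V_2^\va/y\|_{L^2_y}\le 2\|\pt_yV_2^\va\|_{L^2_y}$, which hold because $U_2^\va(x,0,t)=V_2^\va(x,0,t)=0$. Combined with the rescaling identity $\va^{-1/2}y^2\pt_z^2c^{B,1}(x,y/\sqrt\va,t)=\va^{1/2}z^2\pt_z^2c^{B,1}(x,z,t)\big|_{z=y/\sqrt\va}$ and the uniform bound $\|\lge z\rge^2\pt_z^2c^{B,1}\|_{L^\infty_{xz}}\le C$ (a consequence of Lemma \ref{l10} and anisotropic Sobolev embedding), this bounds the singular piece by $C\va^{1/2}\|\na\vec{U}^{\,\va}\|_{L^2}\|\na\vec{V}^{\,\va}\|_{L^2}\le\tfrac{\va}{8}\|\na\vec{V}^{\,\va}\|^2+C\|\na\vec{U}^{\,\va}\|^2$, the extra factor $\va^{1/2}$ being exactly what is needed to close the estimate. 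The bounded components of $\na^2C^a$ (namely $\pt_x^2C^a$, $\pt_x\pt_yC^a$, $\pt_y^2c^0$ and $\pt_z^2c^{B,2}$) are treated by routine H\"{o}lder + Young. Summing everything and absorbing the collected $\tfrac{\va}{8}\|\na\vec{V}^{\,\va}\|^2$ pieces on the left-hand side yields \eqref{a20}.
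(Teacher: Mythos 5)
Your proposal follows the same overall strategy as the paper: test the equation \eqref{a10} for $\vec{V}^{\,\va}=\na C^\va$ with $\vec{V}^{\,\va}$ itself, kill the boundary terms via $\pt_yC^\va(x,0,t)=0$, dispatch the convective trilinears using $\na\cdot\vec{U}^{\,\va}=\na\cdot\vec{u}^{\,0}=0$, and correctly identify the crux as the $\va^{-1/2}\pt_z^2c^{B,1}\,U_2^\va V_2^\va$ piece hidden in $\na(\vec{U}^{\,\va}\cdot\na C^a)\cdot\vec{V}^{\,\va}$. On that singular term your route differs genuinely from the paper's: you apply Hardy's inequality to \emph{both} factors, $\|U_2^\va/y\|_{L^2}\lesssim\|\pt_yU_2^\va\|_{L^2}$ and $\|V_2^\va/y\|_{L^2}\lesssim\|\pt_yV_2^\va\|_{L^2}$, and pay with the weighted $L^\infty_{xz}$ bound on $\lge z\rge^2\pt_z^2c^{B,1}$, ending at $C\va^{1/2}\|\na\vec{U}^{\,\va}\|\,\|\na\vec{V}^{\,\va}\|$; the paper instead uses the one-sided representation $U_2^\va(x,y,t)=\int_0^y\pt_\eta U_2^\va\,d\eta$ together with Cauchy--Schwarz in $y$ and the weighted $L^2_z$ bound $\int_0^\infty z\|\pt_z^2c^{B,1}\|_{L^\infty_x}^2dz$, landing at $\|\na\vec{U}^{\,\va}\|\,\|\vec{V}^{\,\va}\|_{L^2}$ (see \eqref{a74}--\eqref{a18}). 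Both close: yours consumes a fraction of the $\va\|\na\vec{V}^{\,\va}\|^2$ dissipation but is symmetric and arguably cleaner; the paper's keeps $\vec{V}^{\,\va}$ in plain $L^2$ and so leaves the full dissipation intact. The mixed components $\pt_x\pt_zc^{B,1}$, $\pt_x^2c^{B,1}$ and the $c^{B,2}$ contributions are handled in both arguments by rescaled $L^4$ estimates, as you indicate.

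One point does not literally deliver \eqref{a20}: for $\va^{\frac12}\int\na M^\va\cdot\vec{V}^{\,\va}C^\va$ your Ladyzhenskaya bound $\|C^\va\vec{V}^{\,\va}\|_{L^2}^2\le C\|C^\va\|_{L^2}\|\vec{V}^{\,\va}\|_{L^2}^2\|\na\vec{V}^{\,\va}\|_{L^2}$ produces, after Young, a term $C\va\|C^\va\|_{L^2}^2\|\vec{V}^{\,\va}\|_{L^2}^4$, i.e.\ a coefficient $\va\|C^\va\|_{L^2}^2\|\vec{V}^{\,\va}\|_{L^2}^2$ multiplying $\|\vec{V}^{\,\va}\|_{L^2}^2$, which is quartic in $\vec{V}^{\,\va}$ and is not of the form appearing in \eqref{a20}; such a superlinear Gronwall coefficient would also complicate the way the lemma is used in Lemma \ref{l4}, where no a priori bound on $\|\vec{V}^{\,\va}\|_{L^2}$ is assumed. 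The intended (and simpler) estimate is $\va^{\frac12}\|\na M^\va\|_{L^2}\|C^\va\|_{L^\infty_{xy}}\|\vec{V}^{\,\va}\|_{L^2}\le\frac{1}{16}\|\na M^\va\|_{L^2}^2+C\va\|C^\va\|_{L^\infty_{xy}}^2\|\vec{V}^{\,\va}\|_{L^2}^2$, which is what the paper does (and which is later absorbed using the bootstrap hypothesis $\|C^\va\|_{L^\infty_{xy}}<1$). Replacing that single step makes your argument deliver the stated inequality.
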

\begin{proof}
Taking the $L^2_{xy}$ inner product of \eqref{a10} with $\vec{V}^{\,\va}$ and integrating the resulting equality over $\mathbb{R}^2_{+}$, then using integration by parts, one gets
\be\label{a11}
\begin{split}
&\frac{1}{2}\frac{d}{dt}\|\vec{V}^{\,\va}\|_{L^2_{xy}}^2+\va \|\na\vec{V}^{\,\va}\|_{L^2_{xy}}^2\\
=&-\va\int_{-\infty}^\infty \pt_y V^\va_1(x,0,t)V^\va_1(x,0,t) dx-\va\int_{-\infty}^\infty \pt_y V^\va_2(x,0,t)V^\va_2(x,0,t) dx\\
&-\va^{\frac{1}{2}}\int_{0}^\infty \int_{-\infty}^\infty\na (\vec{U}^{\,\va}\cdot\vec{V}^{\,\va})\cdot \vec{V}^{\,\va}\,dxdy
-\int_{0}^\infty \int_{-\infty}^\infty\na (\vec{U}^{\,\va}\cdot\na C^{a})\cdot \vec{V}^{\,\va}\,dxdy\\
&-\int_{0}^\infty \int_{-\infty}^\infty\na (\vec{u}^{\,0}\cdot\vec{V}^{\,\va})\cdot \vec{V}^{\,\va}\,dxdy
-\va^{\frac{1}{2}}\int_{0}^\infty \int_{-\infty}^\infty\na M^\va \cdot \vec{V}^{\,\va}C^\va\,dxdy\\
&-\va^{\frac{1}{2}}\int_{0}^\infty \int_{-\infty}^\infty M^\va  \vec{V}^{\,\va}\cdot \vec{V}^{\,\va}\,dxdy
-\int_{0}^\infty \int_{-\infty}^\infty M^\va  \na C^a\cdot \vec{V}^{\,\va}\,dxdy\\
&-\int_{0}^\infty \int_{-\infty}^\infty \na M^\va  \cdot \vec{V}^{\,\va}C^a\,dxdy
-\int_{0}^\infty \int_{-\infty}^\infty \na M^a  \cdot \vec{V}^{\,\va}C^\va\,dxdy\\
&-\int_{0}^\infty \int_{-\infty}^\infty M^a  \vec{V}^{\,\va}\cdot \vec{V}^{\,\va}\,dxdy
+\va^{-\frac{1}{2}}\int_{0}^\infty \int_{-\infty}^\infty \na g^\va  \cdot \vec{V}^{\,\va}\,dxdy\\
&:=\sum_{i=1}^{12}H_i.
\end{split}
\ee
By the definition of $\vec{V}^{\,\va}$ and the boundary conditions in \eqref{e23}, one gets
\ben
\begin{split}
H_1
=&-\va\int_{-\infty}^\infty \pt_y \pt_x C^\va(x,0,t)\pt_xC^\va(x,0,t) dx=0,\\
H_2
=&-\va\int_{-\infty}^\infty \pt_y^2 C^\va(x,0,t)\pt_y C^\va(x,0,t) dx=0.
\end{split}
\enn
By integration by parts, the fact $\na \cdot \vec{U}^{\,\va}=0$ and Sobolev embedding inequality we have
\ben
\begin{split}
 H_3
 =&
 -\va^{\frac{1}{2}}\int_{0}^\infty \int_{-\infty}^\infty (\vec{V}^{\,\va}\cdot\na\vec{U}^{\,\va})\cdot
  \vec{V}^{\,\va}\,dxdy
  +\frac{1}{2}\va^{\frac{1}{2}}\int_{0}^\infty \int_{-\infty}^\infty (\na\cdot\vec{U}^{\,\va})
  |\vec{V}^{\,\va}|^{2}\,dxdy\\
  \leq & \va^{\frac{1}{2}}\|\na\vec{U}^{\,\va}\|_{L^2_{xy}}\|\vec{V}^{\,\va}\|_{L^4_{xy}}^2\\
  \leq &\va^{\frac{1}{2}}\|\na\vec{U}^{\,\va}\|_{L^2_{xy}}(\|\vec{V}^{\,\va}\|_{L^2_{xy}}^2
  +\|\vec{V}^{\,\va}\|_{L^2_{xy}}\|\na\vec{V}^{\,\va}\|_{L^2_{xy}})
  \\
  \leq &\frac{1}{2}\va\|\na\vec{V}^{\,\va}\|_{L^2_{xy}}^2
  +C(\|\na\vec{U}^{\,\va}\|_{L^2_{xy}}^2+1)\|\vec{V}^{\,\va}\|_{L^2_{xy}}^2.
\end{split}
\enn
Similarly, it follows from the fact $\na\cdot \vec{u}^{\,0}=0$ and the Sobolev embedding inequality that
\ben
\begin{split}
H_5
=-\int_{0}^\infty \int_{-\infty}^\infty (\vec{V}^{\,\va}\cdot\na\vec{u}^{\,0})\cdot \vec{V}^{\,\va}\,dxdy
\leq \|\na\vec{u}^{\,0}\|_{L^\infty_{xy}}\|\vec{V}^{\,\va}\|_{L^2_{xy}}^2
\leq C\|\vec{u}^{\,0}\|_{H^3_{xy}}\|\vec{V}^{\,\va}\|_{L^2_{xy}}^2.
\end{split}
\enn
A direct computation gives
\be\label{a19}
\begin{split}
H_4
=&-\int_{0}^\infty \int_{-\infty}^\infty(\na \vec{U}^{\,\va}\,\vec{V}^{\,\va}) \cdot\na C^{a}\,dxdy
-\int_{0}^\infty \int_{-\infty}^\infty (\vec{U}^{\,\va}\cdot\na\na C^{a})\cdot \vec{V}^{\,\va}\,dxdy\\
=&-\int_{0}^\infty \int_{-\infty}^\infty(\na \vec{U}^{\,\va}\,\vec{V}^{\,\va}) \cdot\na C^{a}\,dxdy
-\int_{0}^\infty \int_{-\infty}^\infty (\vec{U}^{\,\va}\cdot\na\na c^{0})\cdot \vec{V}^{\,\va}\,dxdy\\
&-\va^{\frac{1}{2}}\int_{0}^\infty \int_{-\infty}^\infty (\vec{U}^{\,\va}\cdot\na\na c^{B,1})\cdot \vec{V}^{\,\va}\,dxdy
-\va\int_{0}^\infty \int_{-\infty}^\infty (\vec{U}^{\,\va}\cdot\na\na c^{B,2})\cdot \vec{V}^{\,\va}\,dxdy.\\
\end{split}
\ee
We next estimate each term in \eqref{a19}. 
One deduces from \eqref{a34} that
\be\label{a36}
\begin{split}
-\int_{0}^\infty \int_{-\infty}^\infty(\na \vec{U}^{\,\va}\vec{V}^{\,\va}) \cdot\na C^{a}\,dxdy
\leq &\frac{1}{4}\|\na \vec{U}^{\,\va}\|_{L^2_{xy}}^2 +\|\na C^{a}\|_{L^\infty_{xy}}^2
\|\vec{V}^{\,\va}\|_{L^2_{xy}}^2\\
\leq &\frac{1}{4}\|\na \vec{U}^{\,\va}\|_{L^2_{xy}}^2 +C
\|\vec{V}^{\,\va}\|_{L^2_{xy}}^2.
\end{split}
\ee
The Sobolev embedding inequality entails that
\be\label{a76}
\begin{split}
-\int_{0}^\infty \int_{-\infty}^\infty (\vec{U}^{\,\va}\cdot\na\na c^{0})\cdot \vec{V}^{\,\va}\,dxdy
\leq& \|\vec{U}^{\,\va}\|_{L^2_{xy}}\|\na^2 c^{0}\|_{L^\infty_{xy}}\|\vec{V}^{\,\va}\|_{L^2_{xy}}\\
\leq& C\|c^0\|_{H^4_{xy}}(\|\vec{U}^{\,\va}\|_{L^2_{xy}}^2+\|\vec{V}^{\,\va}\|_{L^2_{xy}}^2).
\end{split}
\ee
It follows from the change of variables $z=\frac{y}{\sqrt{\va}}$, the fact $\vec{U}^{\,\va}(x,0,t)=\mathbf{0}$ and the H\"{o}lder inequality that
\be\label{a74}
\begin{split}
&-\va^{\frac{1}{2}}\int_{0}^\infty \int_{-\infty}^\infty U^\va_2(x,y,t)\pt^2_y c^{B,1}(x,\frac{y}{\sqrt{\va}},t) V^\va_2(x,y,t)  dxdy\\
=&-\va^{-\frac{1}{2}} \int_{-\infty}^\infty \int_{0}^\infty U^\va_2(x,y,t)\pt^2_z c^{B,1}(x,\frac{y}{\sqrt{\va}},t) V^\va_2 (x,y,t) dydx\\
=&-\va^{-\frac{1}{2}}\int_{-\infty}^\infty\int_{0}^\infty  [\int_0^y\pt_\eta U^\va_2(x,\eta,t)d\eta]\pt^2_z c^{B,1}(x,\frac{y}{\sqrt{\va}},t) V^\va_2 (x,y,t) dydx\\
\leq &\va^{-\frac{1}{2}}\int_{-\infty}^\infty\int_{0}^\infty  \Big\{\int_0^y[\pt_\eta U^\va_2(x,\eta,t)]^2d\eta\Big\}^{\frac{1}{2}}y^{\frac{1}{2}}|\pt^2_z c^{B,1}(x,\frac{y}{\sqrt{\va}},t)| \,|V^\va_2 (x,y,t)| dydx\\
\leq &\va^{-\frac{1}{2}}\Big\{\int_{-\infty}^\infty\int_{0}^\infty  [\pt_\eta U^\va_2(x,\eta,t)]^2d\eta dx\Big\}^{\frac{1}{2}}\\
&\times \Big\{\int_{-\infty}^\infty\Big[\int_{0}^\infty y^{\frac{1}{2}}|\pt^2_z c^{B,1}(x,\frac{y}{\sqrt{\va}},t)| \,|V^\va_2 (x,y,t)| dy\Big]^2 dx\Big\}^{\frac{1}{2}},\\
\end{split}
\ee
where
\ben
\begin{split}
\Big[\int_{0}^\infty y^{\frac{1}{2}}|\pt^2_z c^{B,1}(x,\frac{y}{\sqrt{\va}},t)| \,|V^\va_2 (x,y,t)| dy\Big]^2
\leq &\int_0^\infty y |\pt_z^2 c^{B,1}(x,\frac{y}{\sqrt{\va}},t)|^2 dy\cdot \int_0^\infty
|V^\va_2 (x,y,t)|^2 dy\\
=&\va \int_0^\infty z |\pt_z^2 c^{B,1}(x,z,t)|^2 dz\cdot \int_0^\infty
|V^\va_2 (x,y,t)|^2 dy.
\end{split}
\enn
Substituting the above estimate into \eqref{a74} and using the Sobolev embedding inequality to have
\be\label{a18}
\begin{split}
&-\va^{\frac{1}{2}}\int_{0}^\infty \int_{-\infty}^\infty U^\va_2(x,y,t)\pt^2_yc^{B,1}(x,\frac{y}{\sqrt{\va}},t) V^\va_2(x,y,t)  dxdy\\
\leq &\Big\{\int_{-\infty}^\infty\int_{0}^\infty  [\pt_\eta U^\va_2(x,\eta,t)]^2d\eta dx\Big\}^{\frac{1}{2}}\\
&\times \Big\{\int_{-\infty}^\infty \Big[\int_0^\infty z |\pt_z^2 c^{B,1}(x,z,t)|^2 dz\cdot \int_0^\infty
|V^\va_2 (x,y,t)|^2 dy\Big] dx\Big\}^{\frac{1}{2}}\\
\leq& \|\na \vec{U}^{\,\va}\|_{L^2_{xy}}\| \vec{V}^{\,\va}\|_{L^2_{xy}}\Big(\int_0^\infty z \|\pt_z^2 c^{B,1}(x,z,t)\|_{L^\infty_x}^2 dz\Big)^{\frac{1}{2}}\\
\leq & \frac{1}{32}\|\na \vec{U}^{\,\va}\|_{L^2_{xy}}^2
+C\|\lge z\rge c^{B,2}\|_{H^1_xH^2_z}^2
\| \vec{V}^{\,\va}\|_{L^2_{xy}}^2.
\end{split}
\ee
It follows from the change of variable $z=\frac{y}{\sqrt{\va}}$, the Sobolev embedding inequality and the assumption $0<\va<1$ that
\be\label{a37}
\begin{split}
&-\va^{\frac{1}{2}}\int_{0}^\infty \int_{-\infty}^\infty U^{\va}_2(x,y,t)\pt_x\pt_y c^{B,1}(x,\frac{y}{\sqrt{\va}},t) V^{\va}_1(x,y,t)\,dxdy\\
=& -\int_{0}^\infty \int_{-\infty}^\infty U^{\va}_2(x,y,t)\,\pt_x\pt_z  c^{B,1}(x,\frac{y}{\sqrt{\va}},t)\, V^{\va}_1(x,y,t)\,dxdy\\
\leq & \|\vec{U}^{\,\va}\|_{L^4_{xy}}\|\pt_x\pt_z  c^{B,1}\|_{L^4_{xy}}\|\vec{V}^{\,\va}\|_{L^2_{xy}}\\
= & \va^{\frac{1}{4}}\|\vec{U}^{\,\va}\|_{L^4_{xy}}\|\pt_x\pt_z  c^{B,1}\|_{L^4_{xz}}\|\vec{V}^{\,\va}\|_{L^2_{xy}}\\
\leq & \frac{1}{32}\|\na \vec{U}^{\,\va}\|_{L^2_{xy}}^2
+C \|\pt_x\pt_zc^{B,1}\|_{H^1_{xz}}^2\|\vec{V}^{\,\va}\|_{L^2_{xy}}^2.
\end{split}
\ee
Similar arguments used in attaining \eqref{a37} further lead to
\be\label{a75}
\begin{split}
&-\va^{\frac{1}{2}}\int_{0}^\infty \int_{-\infty}^\infty U^{\va}_1(x,y,t)[\pt_x^2 c^{B,1}(x,\frac{y}{\sqrt{\va}},t) V^{\va}_1(x,y,t)
\,dxdy\\
&-\va^{\frac{1}{2}}\int_{0}^\infty \int_{-\infty}^\infty U^{\va}_1(x,y,t)\pt_x\pt_y  c^{B,1}(x,\frac{y}{\sqrt{\va}},t) V^{\va}_2(x,y,t)\,dxdy\\
\leq & \frac{1}{16}\|\na \vec{U}^{\,\va}\|_{L^2_{xy}}^2
+C (\|\pt^2_xc^{B,1}\|_{H^1_{xz}}^2+\|\pt_x\pt_zc^{B,1}\|_{H^1_{xz}}^2)\|\vec{V}^{\,\va}\|_{L^2_{xy}}^2.
\end{split}
\ee
Collecting \eqref{a18}-\eqref{a75}, we arrive at
\be\label{a38}
\begin{split}
&-\va^{\frac{1}{2}}\int_{0}^\infty \int_{-\infty}^\infty (\vec{U}^{\,\va}\cdot\na\na c^{B,1})\cdot \vec{V}^{\,\va}\,dxdy\\
\leq& \frac{1}{8}\|\na \vec{U}^{\,\va}\|_{L^2_{xy}}^2
+C(\|\lge z\rge c^{B,1}\|_{H^3_xH^1_z}^2+\|\lge z\rge c^{B,1}\|_{H^1_xH^2_z}^2)
\| \vec{V}^{\,\va}\|_{L^2_{xy}}^2.
\end{split}
\ee
Employing a similar argument used in deriving \eqref{a37} and using the assumption $0<\va<1$, one gets
\ben
-\va\int_{0}^\infty \int_{-\infty}^\infty (\vec{U}^{\,\va}\cdot\na\na c^{B,2})\cdot \vec{V}^{\,\va}\,dxdy
\leq \frac{1}{8}\|\na \vec{U}^{\,\va}\|_{L^2_{xy}}^2
+C(\|\lge z\rge c^{B,2}\|_{H^3_xH^1_z}^2+\|\lge z\rge c^{B,2}\|_{H^1_xH^2_z}^2)
\| \vec{V}^{\,\va}\|_{L^2_{xy}}^2,
\enn
which, along with \eqref{a19}-\eqref{a76} and \eqref{a38} gives rise to
\ben
\begin{split}
H_4
\leq & \frac{1}{2}\|\na \vec{U}^{\,\va}\|_{L^2_{xy}}^2
+C(\|c^{0}\|_{H^4_{xy}}+1)
\|\vec{V}^{\,\va}\|_{L^2_{xy}}^2
+C\|c^{0}\|_{H^4_{xy}}\|\vec{U}^{\,\va}\|_{L^2_{xy}}^2\\
&+C(\|\lge z\rge c^{B,1}\|_{H^3_xH^1_z}^2+\|\lge z\rge c^{B,1}\|_{H^1_xH^2_z}^2+\|\lge z\rge c^{B,2}\|_{H^3_xH^1_z}^2+\|\lge z\rge c^{B,2}\|_{H^1_xH^2_z}^2)
\|\vec{V}^{\,\va}\|_{L^2_{xy}}^2.
\end{split}
\enn
A direct computation and the Cauchy-Schwarz inequality lead to
\ben
\begin{split}
H_6+H_{12}
\leq 
 \frac{1}{16}\|\na M^\va\|_{L^2_{xy}}^2
+C\va\|C^\va\|_{L^\infty_{xy}}^2\|\vec{V}^{\,\va}\|_{L^2_{xy}}^2+\|\vec{V}^{\,\va}\|_{L^2_{xy}}^2+\va^{-1}\|\na g^\va\|_{L^2_{xy}}^2
\end{split}
\enn
and
\ben
\begin{split}
H_7
\leq \va^{\frac{1}{2}} \|M^\va \|_{L^\infty_{xy}}\|\vec{V}^{\,\va} \|_{L^2_{xy}}^2
.
\end{split}
\enn
It follows from \eqref{a34} and \eqref{b72} that
\ben
\begin{split}
H_8
\leq \|\na C^a\|_{L^\infty_{xy}}(\|M^\va\|_{L^2_{xy}}^2+\|\vec{V}^{\,\va}\|_{L^2_{xy}}^2)
\leq C(\|M^\va\|_{L^2_{xy}}^2+\|\vec{V}^{\,\va}\|_{L^2_{xy}}^2)
\end{split}
\enn
and that
\ben
\begin{split}
H_9
\leq \|\na M^\va\|_{L^2_{xy}}\|C^a\|_{L^\infty_{xy}}
\|\vec{V}^{\,\va}\|_{L^2_{xy}}
\leq \frac{1}{16}\|\na M^\va\|_{L^2_{xy}}^2+C
\|\vec{V}^{\,\va}\|_{L^2_{xy}}^2
\end{split}
\enn
and that
\ben
\begin{split}
H_{11}
\leq \|M^a\|_{L^\infty_{xy}}\|\vec{V}^{\,\va}\|_{L^2_{xy}}^2\leq C\|\vec{V}^{\,\va}\|_{L^2_{xy}}^2
.
\end{split}
\enn
\eqref{a35} and the Sobolev embedding inequality entail that
\ben
\begin{split}
H_{10}
\leq \|\na M^a\|_{L^4_{xy}}\|\vec{V}^{\,\va}\|_{L^2_{xy}}
\|C^{\va}\|_{L^4_{xy}}
\leq C\|\vec{V}^{\,\va}\|_{L^2_{xy}}^{\frac{3}{2}}
\|C^{\va}\|_{L^2_{xy}}^{\frac{1}{2}}
\leq \|C^{\va}\|_{L^2_{xy}}^2
+C\|\vec{V}^{\,\va}\|_{L^2_{xy}}^2.
\end{split}
\enn
Substituting the above estimates on $H_1$-$H_{12}$ into \eqref{a11} and using Proposition \ref{p2}, Lemma \ref{l10} and Lemma \ref{l11}, we obtain \eqref{a20}. The proof is finished.

\end{proof}

\begin{lemma}
Let the assumptions in Proposition \ref{p3} hold. Then
there exists a constant $C$ independent of $\va$, such that
\be\label{a25}
\begin{split}
&\frac{d}{dt}(\|C^\va\|^2_{L^2_{xy}}+\|\vec{U}^{\,\va}\|_{L^2_{xy}}^2)+\va \|\na C^\va\|_{L^2_{xy}}^2+2\|\na \vec{U}^{\,\va}\|^2_{L^2_{xy}}\\
\leq& C\va^{\frac{1}{2}}\|M^\va\|_{L^\infty_{xy}}\|C^\va\|_{L^2_{xy}}^2+C(\|M^\va\|_{L^2_{xy}}^2+\|C^\va\|_{L^2_{xy}}^2+\|\vec{U}^{\,\va}\|_{L^2_{xy}}^2)\\
&+\va^{-1}\|g^\va\|_{L^2_{xy}}^2+\va^{-1}\|\vec{h}^{\,\va}\|_{L^2_{xy}}^2.
\end{split}
\ee
\end{lemma}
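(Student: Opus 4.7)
The plan is to derive the two $L^2$ bounds separately and then add them. First I would test the second equation of \eqref{e23} with $C^\va$ in $L^2_{xy}$ and integrate by parts. The diffusion term produces $\va\|\na C^\va\|_{L^2_{xy}}^2$ with no boundary contribution thanks to $\pt_y C^\va(x,0,t)=0$; the convection terms $\va^{1/2}\vec U^{\,\va}\!\cdot\!\na C^\va\cdot C^\va$ and $\vec u^{\,0}\!\cdot\!\na C^\va\cdot C^\va$ both vanish after integration by parts, using $\na\cdot\vec U^{\,\va}=\na\cdot\vec u^{\,0}=0$ together with the Dirichlet conditions $\vec U^{\,\va}(x,0,t)=\vec u^{\,0}(x,0,t)=\mathbf 0$; and the nonlinear reaction term $\va^{1/2}M^\va C^\va\cdot C^\va$ is bounded directly by $\va^{1/2}\|M^\va\|_{L^\infty_{xy}}\|C^\va\|_{L^2_{xy}}^2$. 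The remaining forcing terms $\vec U^{\,\va}\!\cdot\!\na C^a\cdot C^\va$, $M^\va C^a\cdot C^\va$, $M^a C^\va\cdot C^\va$, and $\va^{-1/2} g^\va\cdot C^\va$ will be handled by Cauchy--Schwarz together with the uniform bounds $\|\na C^a\|_{L^\infty_{xy}}+\|C^a\|_{L^\infty_{xy}}+\|M^a\|_{L^\infty_{xy}}\leq C$ that come from Proposition \ref{p2} and Lemmas \ref{l10}--\ref{l11} (and which have in fact already been established in \eqref{a34}--\eqref{b72}); the forcing term gives the $\va^{-1}\|g^\va\|_{L^2_{xy}}^2$ on the right-hand side.

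Next I would test the third equation of \eqref{e23} with $\vec U^{\,\va}$ in $L^2_{xy}$. The pressure term $\int\na P^\va\cdot\vec U^{\,\va}$ vanishes by integration by parts (again using $\na\cdot\vec U^{\,\va}=0$ together with $\vec U^{\,\va}|_{y=0}=\mathbf 0$), the self-transport term $\va^{1/2}\vec U^{\,\va}\!\cdot\!\na\vec U^{\,\va}\cdot\vec U^{\,\va}$ vanishes, and $\vec u^{\,0}\!\cdot\!\na\vec U^{\,\va}\cdot\vec U^{\,\va}$ also vanishes for the same reasons. The cross transport $\vec U^{\,\va}\!\cdot\!\na\vec u^{\,0}\cdot\vec U^{\,\va}$ is bounded by $\|\na\vec u^{\,0}\|_{L^\infty_{xy}}\|\vec U^{\,\va}\|_{L^2_{xy}}^2\leq C\|\vec u^{\,0}\|_{H^3_{xy}}\|\vec U^{\,\va}\|_{L^2_{xy}}^2$. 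The diffusion yields exactly $\|\na\vec U^{\,\va}\|_{L^2_{xy}}^2$ with no boundary term because $\vec U^{\,\va}(x,0,t)=\mathbf 0$. The gravity coupling $M^\va(0,\lambda)\cdot\vec U^{\,\va}$ is absorbed by Cauchy--Schwarz as $C(\|M^\va\|_{L^2_{xy}}^2+\|\vec U^{\,\va}\|_{L^2_{xy}}^2)$, and the forcing $\va^{-1/2}\vec h^{\,\va}\cdot\vec U^{\,\va}$ produces the $\va^{-1}\|\vec h^{\,\va}\|_{L^2_{xy}}^2$ term.

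Adding the two identities and collecting constants yields \eqref{a25}. The only mildly delicate point is to verify that all three sources of potential boundary contributions (the oxygen diffusion, the fluid diffusion, and the pressure gradient) really cancel: the first because of the Neumann condition on $C^\va$, the latter two because of the Dirichlet condition on $\vec U^{\,\va}$; beyond that the computation is a routine energy estimate in which every coefficient of $\va$ is preserved unchanged from the Cauchy--Schwarz splittings.
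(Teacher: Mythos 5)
Your proposal is correct and follows essentially the same route as the paper: test the second equation of \eqref{e23} with $C^\va$ and the third with $\vec{U}^{\,\va}$, kill the transport, pressure and boundary terms using $\na\cdot\vec{U}^{\,\va}=\na\cdot\vec{u}^{\,0}=0$, the Neumann condition on $C^\va$ and the Dirichlet condition on $\vec{U}^{\,\va}$, and absorb the rest via Cauchy--Schwarz together with the uniform bounds \eqref{a34} and \eqref{b72}. The only cosmetic difference is that the paper writes the two energy identities with the explicit lists of integrals before estimating, but the content is identical.
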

\begin{proof}
Taking the $L^2_{xy}$ inner product of the second equations of \eqref{e23} with $C^\va$, using integration by parts, \eqref{a34} and \eqref{b72} , we have
\ben
\begin{split}
&\frac{1}{2}\frac{d}{dt}\|C^\va\|^2_{L^2_{xy}}+\va \|\na C^\va\|^2_{L^2_{xy}}\\
=&
-\int_0^\infty \int_{-\infty}^\infty \vec{U}^{\,\va}\cdot\na C^a C^\va dxdy
-\va^{\frac{1}{2}}\int_0^\infty \int_{-\infty}^\infty M^{\va}C^{\va} C^\va dxdy
-\int_0^\infty \int_{-\infty}^\infty M^{\va}C^{a} C^\va dxdy\\
&-\int_0^\infty \int_{-\infty}^\infty M^{a}C^{\va} C^\va dxdy
+\va^{-\frac{1}{2}}\int_0^\infty \int_{-\infty}^\infty g^\va C^\va dxdy\\
\leq &\|\na C^a\|_{L^\infty_{xy}}\|\vec{U}^{\,\va}\|_{L^2_{xy}}
\|C^{\va}\|_{L^2_{xy}}
+\va^{\frac{1}{2}}\| M^\va\|_{L^\infty_{xy}}\|C^{\va}\|_{L^2_{xy}}^2
+\| C^a\|_{L^\infty_{xy}}\|M^{\va}\|_{L^2_{xy}}
\|C^{\va}\|_{L^2_{xy}}\\
&+\| M^a\|_{L^\infty_{xy}}\|C^{\va}\|_{L^2_{xy}}^2
+\va^{-\frac{1}{2}}\|C^\va\|_{L^2_{xy}}\|g^\va\|_{L^2_{xy}}\\
\leq& C\va^{\frac{1}{2}}\| M^\va\|_{L^\infty_{xy}}\|C^{\va}\|_{L^2_{xy}}^2 +C(\|M^\va\|_{L^2_{xy}}^2+\|C^\va\|_{L^2_{xy}}^2+\|\vec{U}^{\,\va}\|_{L^2_{xy}}^2)
+\va^{-1}\|g^\va\|_{L^2_{xy}}^2.
\end{split}
\enn

Testing the third equation of \eqref{e23} with $\vec{U}^{\,\va}$ in $L^2_{xy}$, using integration by parts and the Sobolev embedding inequality, one gets
\ben
\begin{split}
&\frac{1}{2}\frac{d}{dt}\|\vec{U}^{\,\va}\|_{L^2_{xy}}^2+\|\na\vec{U}^{\,\va}\|_{L^2_{xy}}^2\\
=&-\int_0^\infty\int_{-\infty}^\infty (\vec{U}^{\,\va}\cdot \na \vec{u}^{\,0})\cdot \vec{U}^{\,\va}dxdy -\lambda\int_0^\infty\int_{-\infty}^\infty M^\va U^\va_2 dxdy
+\va^{-\frac{1}{2}}\int_0^\infty\int_{-\infty}^\infty\vec{h}^{\,\va}\cdot \vec{U}^{\,\va}dxdy\\
\leq & \|\na \vec{u}^{\,0}\|_{L^\infty_{xy}}\|\vec{U}^{\,\va}\|_{L^2_{xy}}^2
+\lambda\|\vec{U}^{\,\va}\|_{L^2_{xy}}\|M^{\va}\|_{L^2_{xy}}
+\va^{-\frac{1}{2}}\|\vec{U}^{\,\va}\|_{L^2_{xy}}\|h^{\va}\|_{L^2_{xy}}\\
\leq & C \|\vec{u}^{\,0}\|_{H^3_{xy}}\|\vec{U}^{\,\va}\|_{L^2_{xy}}^2
+\lambda(\|\vec{U}^{\,\va}\|_{L^2_{xy}}^2+\|M^{\va}\|_{L^2_{xy}}^2)
+\|\vec{U}^{\,\va}\|_{L^2_{xy}}^2+\va^{-1}\|\vec{h}^{\,\va}\|_{L^2_{xy}}^2.
\end{split}
\enn
Collecting the above two estimates and using Proposition \ref{p2}, one gets \eqref{a25}.
The proof is completed.

\end{proof}

\begin{lemma}\label{l4} Suppose that the assumptions in Proposition \ref{p3} hold true. Assume further that
\ben
 \|M^\va\|_{L^\infty_TL^\infty_{xy}}+\|C^\va\|_{L^\infty_TL^\infty_{xy}}+\|\vec{U}^{\,\va}\|_{L^\infty_TH^2_{xy}}<1.
 \enn
Then
there exists a constant $C$ independent of $\va$, such that
\ben
\begin{split}
&\|M^\va\|_{L^\infty_TL^2_{xy}}+\|C^\va\|_{L^\infty_TL^2_{xy}}
+\|\vec{V}^{\,\va}\|_{L^\infty_TL^2_{xy}}
+\|\vec{U}^{\,\va}\|_{L^\infty_TL^2_{xy}}\\
&+\|\na M^\va\|_{L^2_TL^2_{xy}}
+\va^{\frac{1}{2}}\|\vec{V}^{\,\va}\|_{L^2_TH^1_{xy}}
+\|\na \vec{U}^{\,\va}\|_{L^2_TL^2_{xy}}\\
\leq& C\va^{\frac{1}{2}}.
\end{split}
\enn
\end{lemma}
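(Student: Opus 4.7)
The plan is to combine the three a priori differential inequalities \eqref{a24}, \eqref{a20} and \eqref{a25} into a single energy estimate for the composite quantity $E(t):=\|M^\va\|_{L^2_{xy}}^2+\|\vec V^{\,\va}\|_{L^2_{xy}}^2+\|C^\va\|_{L^2_{xy}}^2+\|\vec U^{\,\va}\|_{L^2_{xy}}^2$ and then close it via Gronwall's inequality. First I add the three inequalities; on the left-hand side this produces the dissipation $\tfrac{5}{4}\|\na M^\va\|_{L^2_{xy}}^2+\va\|\na\vec V^{\,\va}\|_{L^2_{xy}}^2+\va\|\na C^\va\|_{L^2_{xy}}^2+2\|\na\vec U^{\,\va}\|_{L^2_{xy}}^2$, while on the right-hand side the two potentially offending terms $\tfrac{1}{4}\|\na M^\va\|_{L^2_{xy}}^2$ and $\|\na\vec U^{\,\va}\|_{L^2_{xy}}^2$ (both coming from \eqref{a20}) are absorbed into the corresponding left-hand dissipations. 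Recalling that $\vec V^{\,\va}=\na C^\va$, so that $\va\|\na C^\va\|_{L^2_{xy}}^2=\va\|\vec V^{\,\va}\|_{L^2_{xy}}^2$, the net dissipation is bounded below by $\|\na M^\va\|_{L^2_{xy}}^2+\va\|\vec V^{\,\va}\|_{H^1_{xy}}^2+\|\na\vec U^{\,\va}\|_{L^2_{xy}}^2$.

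Next I would exploit the bootstrap hypothesis. Since $\|M^\va\|_{L^\infty_{xy}}<1$, $\|C^\va\|_{L^\infty_{xy}}<1$ and $\|\vec U^{\,\va}\|_{H^2_{xy}}<1$ (the last of which forces $\|\na\vec U^{\,\va}\|_{L^2_{xy}}<1$), every nonlinear prefactor on the right-hand side is pointwise bounded by a constant depending only on $T$. In particular $C\va\|M^\va\|_{L^\infty_{xy}}^2\|\na C^\va\|_{L^2_{xy}}^2\le C\va\|\vec V^{\,\va}\|_{L^2_{xy}}^2$, $C\va^{1/2}\|M^\va\|_{L^\infty_{xy}}\|C^\va\|_{L^2_{xy}}^2\le C\|C^\va\|_{L^2_{xy}}^2$, and the coefficient $C(\|\na\vec U^{\,\va}\|_{L^2_{xy}}^2+\va^{1/2}\|M^\va\|_{L^\infty_{xy}}+\va\|C^\va\|_{L^2_{xy}}^2+1)$ multiplying $\|\vec V^{\,\va}\|_{L^2_{xy}}^2$ in \eqref{a20} is bounded by a pure constant. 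Combining these reductions, the sum of \eqref{a24}, \eqref{a20} and \eqref{a25} becomes
\[
E'(t)+\|\na M^\va\|_{L^2_{xy}}^2+\va\|\vec V^{\,\va}\|_{H^1_{xy}}^2+\|\na\vec U^{\,\va}\|_{L^2_{xy}}^2\le C\,E(t)+R(\va),
\]
where, by Lemmas \ref{l1}--\ref{l3}, the forcing $R(\va):=C\va+\va^{-1}\bigl(\|f^\va\|_{L^2_{xy}}^2+\|g^\va\|_{L^2_{xy}}^2+\|\na g^\va\|_{L^2_{xy}}^2+\|\vec h^{\,\va}\|_{L^2_{xy}}^2\bigr)$ satisfies $R(\va)\le C\va$ uniformly on $[0,T]$.

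Finally, Gronwall's inequality on $[0,T]$ together with $E(0)=0$ yields $E(t)\le C\va$ uniformly in $t$; integrating the full differential inequality over $[0,T]$ then transfers this $O(\va)$ bound onto the dissipation, giving $\|\na M^\va\|_{L^2_TL^2_{xy}}^2+\va\|\vec V^{\,\va}\|_{L^2_TH^1_{xy}}^2+\|\na\vec U^{\,\va}\|_{L^2_TL^2_{xy}}^2\le C\va$. Taking square roots of all seven norms delivers the advertised $\va^{1/2}$ rate. The main subtlety is the interplay between $\|\na\vec U^{\,\va}\|_{L^2_{xy}}^2$ appearing simultaneously as a prefactor multiplying $\|\vec V^{\,\va}\|_{L^2_{xy}}^2$ on the right and as dissipation on the left of \eqref{a20}: it is precisely the $H^2$ control on $\vec U^{\,\va}$ built into the bootstrap hypothesis that makes this coefficient a genuine constant rather than an object that would need to be controlled by the dissipation integral, which would otherwise introduce a circularity into the Gronwall step.
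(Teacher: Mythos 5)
Your proposal is correct and follows essentially the same route as the paper: the paper likewise adds \eqref{a25} and \eqref{a20} to \eqref{a24}, absorbs the $\tfrac14\|\na M^\va\|_{L^2_{xy}}^2$ and $\|\na\vec U^{\,\va}\|_{L^2_{xy}}^2$ terms into the dissipation, uses the bootstrap bounds together with $0<\va<1$ to reduce all prefactors to constants, applies Gronwall with the $O(\va)$ forcing from Lemmas \ref{l1}--\ref{l3}, and then integrates the resulting inequality over $(0,T)$ to recover the dissipation estimates.
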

\begin{proof}
Adding \eqref{a25} and \eqref{a20} to \eqref{a24} and using the assumptions $\|M^\va\|_{L^\infty_TL^\infty_{xy}}<1$, $\|C^\va\|_{L^\infty_TL^\infty_{xy}}<1$, $\|\vec{U}^{\,\va}\|_{L^\infty_TH^2_{xy}}<1$ and $0<\va<1$, one gets
\be\label{a28}
\begin{split}
&\frac{d}{dt}(\|M^\va\|_{L^2_{xy}}^2+\|C^\va\|_{L^2_{xy}}^2+\|\vec{U}^{\,\va}\|_{L^2_{xy}}^2
+\|\vec{V}^{\,\va}\|_{L^2_{xy}}^2)\\
&+\|\na M^\va\|_{L^2_{xy}}^2+\va \|\na C^\va\|_{L^2_{xy}}^2+\|\na\vec{U}^{\,\va}\|_{L^2_{xy}}^2
+\va\|\na\vec{V}^{\,\va}\|_{L^2_{xy}}^2\\
\leq &C(\|M^\va\|_{L^2_{xy}}^2+\|C^\va\|_{L^2_{xy}}^2+\|\vec{U}^{\,\va}\|_{L^2_{xy}}^2+\|\vec{V}^{\,\va}\|_{L^2_{xy}}^2)\\
  &+\va^{-1}\|f^\va\|_{L^2_{xy}}^2+\va^{-1}\|g^\va\|_{L^2_{xy}}^2+\va^{-1}\|\na g^\va\|_{L^2_{xy}}^2+\va^{-1}\|\vec{h}^{\,\va}\|_{L^2_{xy}}^2+C\va.
\end{split}
\ee
Then it follows from the Gronwall's inequality and Lemma \ref{l1}- Lemma \ref{l3} that
\be\label{a27}
\begin{split}
\|M^\va(t)\|_{L^2_{xy}}^2+\|C^\va(t)\|_{L^2_{xy}}^2+\|\vec{U}^{\,\va}(t)\|_{L^2_{xy}}^2
+\|\vec{V}^{\,\va}(t)\|_{L^2_{xy}}^2
\leq C\va
\end{split}
\ee
for all $t\in [0,T].$ Integrating \eqref{a28} over $(0,T)$ and using \eqref{a27}, we obtain the desired estimate. The proof is finished.

\end{proof}

\begin{lemma}\label{l12} Let the assumptions in Lemma \ref{l4} hold. Then there exists a constant $C_4$ independent of $\va$, such that
\ben
\begin{split}
\|\vec{U}^{\,\va}_t\|_{L^\infty_TL^2_{xy}}+\|\vec{U}^{\,\va}_t\|_{L^2_TH^1_{xy}}
+\|\na P^\va\|_{L^\infty_TL^2_{xy}}+\|\vec{U}^{\,\va}\|_{L^\infty_TH^2_{xy}}
\leq C_4\va^{\frac{1}{2}}
\end{split}
\enn
and that
\ben
\|\na P^\va\|_{L^2_TH^1_{xy}}+\|\vec{U}^{\,\va}\|_{L^2_TH^3_{xy}}
\leq  C_4\va^{\frac{1}{4}}.
\enn
\end{lemma}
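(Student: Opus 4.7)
The plan is to combine two energy identities on the momentum equation in \eqref{e23} with stationary Stokes regularity on the half-plane. Lemma \ref{l4} already provides $O(\va^{1/2})$ control of the remainders in $L^\infty_TL^2_{xy}\cap L^2_TH^1_{xy}$, which together with the \emph{a priori} hypothesis $\|\vec{U}^{\,\va}\|_{L^\infty_TH^2_{xy}}<1$ is the starting point. First I test the third equation of \eqref{e23} with $\vec{U}^{\,\va}_t$: since $\na\cdot\vec{U}^{\,\va}=0$ and $\vec{U}^{\,\va}(x,0,t)=\mathbf{0}$ (hence $\vec{U}^{\,\va}_t(x,0,t)=\mathbf{0}$), the pressure term vanishes and the diffusion contributes $\frac{1}{2}\frac{d}{dt}\|\na\vec{U}^{\,\va}\|_{L^2_{xy}}^2$. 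The convective terms are controlled via the 2D Sobolev embedding, Lemma \ref{l4}, Proposition \ref{p2} and the $H^2$-hypothesis; the buoyancy term $M^\va(0,\lambda)$ is $O(\va^{1/2})$ in $L^2_{xy}$ by Lemma \ref{l4}; the forcing $\va^{-1/2}\vec{h}^{\,\va}$ is $O(\va^{3/4})$ in $L^2_{xy}$ by Lemma \ref{l3}. Absorbing a fraction of $\|\vec{U}^{\,\va}_t\|_{L^2_{xy}}^2$ and applying Gronwall's inequality yields $\|\vec{U}^{\,\va}_t\|_{L^2_TL^2_{xy}}+\|\na\vec{U}^{\,\va}\|_{L^\infty_TL^2_{xy}}\le C\va^{1/2}$.

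Next I differentiate the momentum equation in $t$ and test with $\vec{U}^{\,\va}_t$, again exploiting $\na\cdot\vec{U}^{\,\va}_t=0$ and $\vec{U}^{\,\va}_t(x,0,t)=\mathbf{0}$. Since $\vec{U}^{\,\va}(\cdot,0)=\mathbf{0}$ and $M^\va(\cdot,0)=0$, evaluating the momentum equation at $t=0$ shows that $\vec{U}^{\,\va}_t(\cdot,0)$ is the divergence-free component of $\va^{-1/2}\vec{h}^{\,\va}(\cdot,0)$, so Lemma \ref{l3} gives $\|\vec{U}^{\,\va}_t(\cdot,0)\|_{L^2_{xy}}\le C\va^{3/4}$; the forcing $\va^{-1/2}\vec{h}^{\,\va}_t$ contributes $O(\va^{3/4})$ in $L^2_{xy}$ by Lemma \ref{l3}. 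The delicate new source is $M^\va_t$: reading it off the first equation of \eqref{e23} and using the divergence structure together with Lemma \ref{l4} and Lemma \ref{l1}, one obtains $\|M^\va_t\|_{L^2_TH^{-1}_{xy}}\le C\va^{1/2}$, after which the $M^\va_t$ term is paired with $\vec{U}^{\,\va}_t\in H^1_{xy}$ by duality and absorbed. Gronwall's inequality then delivers $\|\vec{U}^{\,\va}_t\|_{L^\infty_TL^2_{xy}}+\|\vec{U}^{\,\va}_t\|_{L^2_TH^1_{xy}}\le C\va^{1/2}$.

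Finally I view the momentum equation as a stationary Stokes system
\[
-\Delta\vec{U}^{\,\va}+\na P^\va=\vec{F}^{\,\va},\qquad \na\cdot\vec{U}^{\,\va}=0,\qquad \vec{U}^{\,\va}|_{y=0}=\mathbf{0},
\]
with
\[
\vec{F}^{\,\va}:=\va^{-1/2}\vec{h}^{\,\va}-\vec{U}^{\,\va}_t-\va^{1/2}\vec{U}^{\,\va}\cdot\na\vec{U}^{\,\va}-\vec{U}^{\,\va}\cdot\na\vec{u}^{\,0}-\vec{u}^{\,0}\cdot\na\vec{U}^{\,\va}-M^\va(0,\lambda),
\]
and apply the standard half-plane Stokes regularity estimate $\|\vec{U}^{\,\va}\|_{H^{k+2}_{xy}}+\|\na P^\va\|_{H^k_{xy}}\le C\|\vec{F}^{\,\va}\|_{H^k_{xy}}$. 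With $k=0$, Step 2 together with Lemmas \ref{l3}--\ref{l4} and the $H^2$-hypothesis give $\|\vec{F}^{\,\va}\|_{L^\infty_TL^2_{xy}}\le C\va^{1/2}$, producing the first group of bounds. With $k=1$ integrated over $[0,T]$, the dominant contribution is $\va^{-1/2}\|\vec{h}^{\,\va}\|_{L^2_TH^1_{xy}}$, which by Lemma \ref{l3} is only $O(\va^{1/4})$; the remaining terms in $\vec{F}^{\,\va}$, including $\va^{1/2}\vec{U}^{\,\va}\cdot\na\vec{U}^{\,\va}$ bootstrapped from the just-proven $L^\infty_TH^2$ bound, remain $O(\va^{1/2})$ or better, accounting precisely for the drop to $\va^{1/4}$ in the second group of bounds. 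The main obstacle is avoiding circularity: the time-differentiated estimate in Step 2 must be closed \emph{without} invoking any $H^2$-regularity of $\vec{U}^{\,\va}$, which is exactly why the $M^\va_t$ term must be treated by $H^{-1}$-duality rather than directly.
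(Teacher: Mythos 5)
Your proposal is correct and follows essentially the same route as the paper: an energy estimate testing the momentum equation with $\vec{U}^{\,\va}_t$, a time-differentiated energy estimate in which the $M^\va_t$ source is handled by substituting the first equation of \eqref{e23} and integrating by parts onto $\vec{U}^{\,\va}_t$ (your $H^{-1}$-duality phrasing is the same mechanism), and finally Stokes-type elliptic regularity to read off the $H^2$/$H^3$ and pressure bounds, with the loss to $\va^{1/4}$ coming precisely from $\va^{-1/2}\|\vec{h}^{\,\va}\|_{L^2_TH^1_{xy}}$. Your explicit treatment of the initial datum $\vec{U}^{\,\va}_t(\cdot,0)$ via the Leray projection of $\va^{-1/2}\vec{h}^{\,\va}(\cdot,0)$ is a detail the paper leaves implicit but which is indeed needed for the Gronwall step.
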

\begin{proof}
Taking the $L^2_{xy}$ inner product of the third equation of \eqref{e23} with $\vec{U}^{\,\va}_t$, one gets from integration by parts and the Sobolev embedding inequality that
\ben
\begin{split}
&\frac{1}{2}\frac{d}{dt}\|\na\vec{U}^{\,\va}\|_{L^2_{xy}}^2+\|\vec{U}^{\,\va}_t\|_{L^2_{xy}}^2\\
=&-\int_0^\infty \int_{-\infty}^\infty (\va^{\frac{1}{2}}\vec{U}^{\,\va}\cdot\na\vec{U}^{\,\va}+\vec{U}^{\,\va}\cdot\na\vec{u}^{\,0}
+\vec{u}^{\,0}\cdot\na\vec{U}^{\,\va}-\va^{-\frac{1}{2}}\vec{h}^{\,\va})\cdot \vec{U}^{\,\va}_tdxdy
-\lambda\int_0^\infty \int_{-\infty}^\infty M^\va \,U^{\va}_{2t}dxdy\\
\leq &\frac{1}{2}\|\vec{U}^{\,\va}_t\|_{L^2_{xy}}^2
+C(\va^{\frac{1}{2}}\|\vec{U}^{\,\va}\|_{L^\infty_{xy}}^2\|\na\vec{U}^{\,\va}\|_{L^2_{xy}}^2
+\|\vec{U}^{\,\va}\|_{L^4_{xy}}^2\|\na\vec{u}^{\,0}\|_{L^4_{xy}}^2
+\|\vec{u}^{\,0}\|_{L^\infty_{xy}}^2\|\na\vec{U}^{\,\va}\|_{L^2_{xy}}^2)\\
&+C(\lambda^2 \|M^\va\|_{L^2_{xy}}^2+C\va^{-1}\|\vec{h}^{\,\va}\|_{L^2_{xy}}^2)
\\
\leq &\frac{1}{2}\|\vec{U}^{\,\va}_t\|_{L^2_{xy}}^2
+C\va^{\frac{1}{2}}(\|\vec{U}^{\,\va}\|_{H^2_{xy}}^2+\|\vec{u}^{\,0}\|_{H^2_{xy}}^2)\|\na\vec{U}^{\,\va}\|_{L^2_{xy}}^2
+C(\lambda^2 \|M^\va\|_{L^2_{xy}}^2+\va^{-1}\|\vec{h}^{\,\va}\|_{L^2_{xy}}^2).
\end{split}
\enn
Applying the Gronwall's inequality to the above estimate, using the assumptions $\|\vec{U}^{\,\va}\|_{L^\infty_TH^2_{xy}}^2< 1$, $0<\va<1$, Proposition \ref{p2}, Lemma \ref{l3} and Lemma \ref{l4}, one gets
\be\label{a30}
\begin{split}
\|\na\vec{U}^{\,\va}\|_{L^\infty_TL^2_{xy}}+\|\vec{U}^{\,\va}_t\|_{L^2_TL^2_{xy}}
\leq C_1\va^{\frac{1}{2}},
\end{split}
\ee
with the constant $C_1$ independent of $\va$, depending on $T$.

Differentiating the third equation in \eqref{e23} with respect to $t$ and testing the resulting equation with $\vec{U}^{\,\va}_{t}$ in $L^2_{xy}$, then using integration by parts to have
\be\label{a33}
\begin{split}
&\frac{1}{2}\frac{d}{dt}\|\vec{U}^{\,\va}_t\|_{L^2_{xy}}^2+\|\na\vec{U}^{\,\va}_t\|_{L^2_{xy}}^2\\
=&-\int_0^\infty\int_{-\infty}^\infty (\va^{\frac{1}{2}}\vec{U}^{\,\va}_t\cdot\na \vec{U}^{\,\va}
+\vec{U}^{\,\va}\cdot\na \vec{u}^{\,0}_t+\vec{U}^{\,\va}_t\cdot\na \vec{u}^{\,0}+\vec{u}^{\,0}_t\cdot\na \vec{U}^{\,\va})\cdot\vec{U}^{\,\va}_tdxdy\\
&-\lambda\int_0^\infty\int_{-\infty}^\infty M^\va_t U^{\va}_{2t} dxdy
+\va^{-\frac{1}{2}}\int_0^\infty\int_{-\infty}^\infty \vec{h}^{\,\va}_t\cdot\vec{U}^{\,\va}_{t} dxdy\\
:=&\sum_{i=1}^3 J_i.
\end{split}
\ee
It follows from the Sobolev embedding inequality and the assumption $0<\va<1$ that
\ben
\begin{split}
J_1
\leq&\va^{\frac{1}{2}}\|\na \vec{U}^{\,\va}\|_{L^2_{xy}}\|\vec{U}^{\,\va}_t\|_{L^4_{xy}}^2
+\|\na \vec{u}^{\,0}\|_{L^\infty_{xy}}\|\vec{U}^{\,\va}_t\|_{L^2_{xy}}^2 
+(\|\na \vec{u}^{\,0}_t\|_{L^\infty_{xy}}\|\vec{U}^{\,\va}\|_{L^2_{xy}}
+\|\vec{u}^{\,0}_t\|_{L^\infty_{xy}}\|\na \vec{U}^{\,\va}\|_{L^2_{xy}})
\|\vec{U}^{\,\va}_t\|_{L^2_{xy}}
\\
\leq&\va^{\frac{1}{2}}\|\na \vec{U}^{\,\va}\|_{L^2_{xy}}\|\vec{U}^{\,\va}_t\|_{H^1_{xy}}^2
+\|\vec{u}^{\,0}\|_{H^3_{xy}}\|\vec{U}^{\,\va}_t\|_{L^2_{xy}}^2 
+(\|\vec{u}^{\,0}_t\|_{H^3_{xy}}\|\vec{U}^{\,\va}\|_{L^2_{xy}}
+\|\vec{u}^{\,0}_t\|_{H^2_{xy}}\|\na \vec{U}^{\,\va}\|_{L^2_{xy}})
\|\vec{U}^{\,\va}_t\|_{L^2_{xy}}
\\
\leq &\frac{1}{4}\|\na\vec{U}^{\,\va}_t\|_{L^2_{xy}}^2
+C(\|\vec{u}^{\,0}_t\|_{H^3_{xy}}^2+\|\vec{u}^{\,0}\|_{H^3_{xy}}^2+1)\|\vec{U}^{\,\va}_t\|_{L^2_{xy}}^2
+\|\vec{U}^{\,\va}\|_{H^1_{xy}}^2
.
\end{split}
\enn
The Cauchy-Schwarz inequality yields
\ben
J_3\leq \va^{-1}\|h^\va_t\|_{L^2_{xy}}^2+\|\vec{U}^{\,\va}_t\|_{L^2_{xy}}^2.
\enn
It follows from the first equation of \eqref{e23}, integration by parts, \eqref{a35}-\eqref{b72} and the Soboloev embedding inequality that
\ben
\begin{split}
J_2
=&\lambda\int_0^\infty\int_{-\infty}^\infty U^{\va}_{2t}\,( \va^{\frac{1}{2}}\vec{U}^{\,\va}\cdot \nabla M^\va+\vec{U}^{\,\va}\cdot \nabla M^a+\vec{u}^{\,0}\cdot \nabla M^\va
-\va^{-\frac{1}{2}}f^\va) dxdy\\
&+\lambda\int_0^\infty\int_{-\infty}^\infty \na U^{\va}_{2t}\cdot\na M^\va dxdy
-\lambda\int_0^\infty\int_{-\infty}^\infty \na U^{\va}_{2t}\cdot [\va^{\frac{1}{2}}M^\va \nabla C^\va+M^\va \nabla C^a+M^a \nabla C^\va]dxdy\\
\leq &\lambda \| \vec{U}^{\,\va}_t\|_{L^2_{xy}}(\va^{\frac{1}{2}}\|\vec{U}^{\,\va}\|_{L^\infty_{xy}}\|\nabla M^\va\|_{L^2_{xy}}+\|\vec{U}^{\,\va}\|_{L^4_{xy}}\|\na M^a\|_{L^4_{xy}}+\|\vec{u}^{\,0}\|_{L^\infty_{xy}}\|\na M^{\va}\|_{L^2_{xy}}
+\va^{-\frac{1}{2}}\|f^\va\|_{L^2_{xy}})\\
&+\lambda\|\na \vec{U}^{\,\va}_t\|_{L^2_{xy}}(\|\na M^{\va}\|_{L^2_{xy}}
+\va^{\frac{1}{2}}\|M^{\va}\|_{L^\infty_{xy}}\|\na C^{\va}\|_{L^2_{xy}}
+\|M^{\va}\|_{L^2_{xy}}\|\na C^{a}\|_{L^\infty_{xy}}
+\|M^a\|_{L^\infty_{xy}}\|\na C^{\va}\|_{L^2_{xy}})
\\
\leq &C\lambda^2 \| \vec{U}^{\,\va}_t\|_{L^2_{xy}}^2+C(\va\|\vec{U}^{\,\va}\|_{H^2_{xy}}^2\|\nabla M^\va\|_{L^2_{xy}}^2+\|\vec{U}^{\,\va}\|_{H^1_{xy}}^2+\|\vec{u}^{\,0}\|_{H^2_{xy}}^2
\|\na M^{\va}\|_{L^2_{xy}}^2)
+\va^{-1}\|f^\va\|_{L^2_{xy}}^2\\
&+\frac{1}{4}\|\na \vec{U}^{\,\va}_t\|_{L^2_{xy}}^2+C\lambda^2(\|\na M^{\va}\|_{L^2_{xy}}^2
+\va\|M^\va\|_{L^\infty_{xy}}^2\|\na C^{\va}\|_{L^2_{xy}}^2
+\|M^{\va}\|_{L^2_{xy}}^2
+\|\na C^{\va}\|_{L^2_{xy}}^2).
\end{split}
\enn
Substituting the above estimates for $J_1$-$J_3$ into \eqref{a33}, then applying the Gronwall's inequality to the resulting inequality and using the assumptions $\| M^\va\|_{L^\infty_TL^2_{xy}}<1$, $\| \vec{U}^{\,\va}\|_{L^\infty_TH^2_{xy}}<1$, \eqref{a30}, Proposition \ref{p2}, Lemma \ref{l1}, Lemma \ref{l3} and Lemma \ref{l4}, we conclude that
\be\label{a32}
\|\vec{U}^{\,\va}_t\|_{L^\infty_TL^2_{xy}}+\|\na\vec{U}^{\,\va}_t\|_{L^2_TL^2_{xy}}
\leq C_2\va^{\frac{1}{2}},
\ee
where the constant $C_2$ is independent of $\va$ and depending on $T$.

Moreover, it follows from the third equation of \eqref{e23}, the Sobolev embedding inequality, the assumption $\|\vec{U}^{\,\va}\|_{L^\infty_TH^2_{xy}}<1$, Proposition \ref{p2}, Lemma \ref{l3} and Lemma \ref{l4} that
\be\label{a40}
\begin{split}
&\|\na P^\va\|_{L^\infty_TL^2_{xy}}+\|\vec{U}^{\,\va}\|_{L^\infty_TH^2_{xy}}\\
\leq& \|\vec{U}^{\,\va}_t\|_{L^\infty_TL^2_{xy}}
+\va^{\frac{1}{2}}\|\vec{U}^{\,\va}\|_{L^\infty_TL^\infty_{xy}}\|\na\vec{U}^{\,\va}\|_{L^\infty_TL^2_{xy}}
+\|\vec{U}^{\,\va}\|_{L^\infty_TL^2_{xy}}\|\na\vec{u}^{\,0}\|_{L^\infty_TL^\infty_{xy}}\\
&+\|\vec{u}^{\,0}\|_{L^\infty_TL^\infty_{xy}}\|\na\vec{U}^{\,\va}\|_{L^\infty_TL^2_{xy}}
+\lambda\|M^{\va}\|_{L^\infty_TL^2_{xy}}+\va^{-\frac{1}{2}}\|\vec{h}^{\,\va}\|_{L^\infty_TL^2_{xy}}\\
\leq & \|\vec{U}^{\,\va}_t\|_{L^\infty_TL^2_{xy}}
+C\va^{\frac{1}{2}}\|\vec{U}^{\,\va}\|_{L^\infty_TH^2_{xy}}\|\na\vec{U}^{\,\va}\|_{L^\infty_TL^2_{xy}}
+C\|\vec{u}^{\,0}\|_{L^\infty_TH^3_{xy}}\|\vec{U}^{\,\va}\|_{L^\infty_TH^1_{xy}}\\
&
+\lambda\|M^{\va}\|_{L^\infty_TL^2_{xy}}+\va^{-\frac{1}{2}}\|\vec{h}^{\,\va}\|_{L^\infty_TL^2_{xy}}\\
\leq &C_3\va^{\frac{1}{2}}
\end{split}
\ee
and that
\be\label{a41}
\begin{split}
&\|\na P^\va\|_{L^2_TH^1_{xy}}+\|\vec{U}^{\,\va}\|_{L^2_TH^3_{xy}}\\
\leq & \|\vec{U}^{\,\va}_t\|_{L^2_TH^1_{xy}}
+C\va^{\frac{1}{2}}\|\vec{U}^{\,\va}\|_{L^\infty_TH^2_{xy}}\|\vec{U}^{\,\va}\|_{L^2_TH^2_{xy}}
+C\|\vec{u}^{\,0}\|_{L^2_TH^4_{xy}}\|\vec{U}^{\,\va}\|_{L^\infty_TH^2_{xy}}\\
&
+\lambda\|M^{\va}\|_{L^2_TH^1_{xy}}+\va^{-\frac{1}{2}}\|\vec{h}^{\,\va}\|_{L^2_TH^1_{xy}}\\
\leq &C_3\va^{\frac{1}{4}},
\end{split}
\ee
where the constant $C_3$ is independent of $\va$, depending on $T$. Collecting \eqref{a30}, \eqref{a32}-\eqref{a41} and denoting $C_4=C_1+C_2+C_3$, we derive the desired estimates. The proof is completed.

\end{proof}

\begin{lemma}\label{l6} Let the assumptions in Lemma \ref{l4} hold true. Then there exists a constant $C$ independent of $\va$ such that
\be\label{b74}
\begin{split}
\frac{d}{dt}&\|M^\va_t\|_{L^2_{xy}}^2+\|\na M^\va_t\|_{L^2_{xy}}^2\\
\leq& C(\|\vec{V}^{\,\va}\|_{L^2_{xy}}^2\|\vec{V}^{\,\va}\|_{H^1_{xy}}^2+1)\|M^\va_t\|_{L^2_{xy}}^2+C(\|\vec{U}^{\,\va}_t\|_{L^2_{xy}}^2
+\|\na M^\va\|_{L^2}^2)\\
&+C(\|\vec{V}^{\,\va}_t\|_{L^2_{xy}}^2
+\|\na\vec{U}^{\,\va}\|_{L^2_{xy}}^2+\|\vec{V}^{\,\va}\|_{H^1_{xy}}^2
+\va^{-1}\|f^\va_t\|_{L^2_{xy}}^2+1).
\end{split}
\ee
\end{lemma}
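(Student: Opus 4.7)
The plan is to differentiate the first equation of \eqref{e23} in $t$ and test the resulting evolution equation for $M^\va_t$ against $M^\va_t$ in $L^2_{xy}$. After integration by parts the dissipation $\|\na M^\va_t\|_{L^2_{xy}}^2$ appears on the left, accompanied by a boundary contribution $\int_{\mathbb{R}}\pt_y M^\va_t(x,0,t)M^\va_t(x,0,t)\,dx$. Differentiating the boundary condition in \eqref{e23} in time yields $\pt_y M^\va_t(x,0,t)=-\va^{1/2}\pt_z\xi_t(x,0,t)$, so this boundary integral is absorbed via the trace inequality $\|g(\cdot,0)\|_{L^2_x}^2\le C\|g\|_{L^2_{xy}}\|\pt_y g\|_{L^2_{xy}}+C\|g\|_{L^2_{xy}}^2$ applied to $M^\va_t$, together with the bound on $\pt_z\xi_t$ furnished by Lemma \ref{l11}, followed by Young's inequality which extracts a small multiple of $\|\na M^\va_t\|_{L^2_{xy}}^2$ plus a contribution to the ``$+1$'' constant.

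The decisive interior contribution is the chemotactic cross-product $\va^{1/2}\int\na M^\va_t\cdot(M^\va_t\vec{V}^{\,\va})\,dxdy$ produced by $\pt_t\na\cdot(\va^{1/2}M^\va\na C^\va)$ after one integration by parts. H\"older's inequality together with the two-dimensional Gagliardo--Nirenberg inequality $\|f\|_{L^4_{xy}}^2\le C\|f\|_{L^2_{xy}}\|f\|_{H^1_{xy}}$ yields
\begin{equation*}
\va^{1/2}\|\na M^\va_t\|_{L^2_{xy}}\|M^\va_t\|_{L^4_{xy}}\|\vec{V}^{\,\va}\|_{L^4_{xy}}\le C\va^{1/2}\|\na M^\va_t\|_{L^2_{xy}}^{3/2}\|M^\va_t\|_{L^2_{xy}}^{1/2}\|\vec{V}^{\,\va}\|_{L^2_{xy}}^{1/2}\|\vec{V}^{\,\va}\|_{H^1_{xy}}^{1/2},
\end{equation*}
and Young's inequality with exponents $(4/3,4)$ splits this into $\tfrac{1}{8}\|\na M^\va_t\|_{L^2_{xy}}^2$ and $C\va^2\|\vec{V}^{\,\va}\|_{L^2_{xy}}^2\|\vec{V}^{\,\va}\|_{H^1_{xy}}^2\|M^\va_t\|_{L^2_{xy}}^2$, reproducing the leading coefficient on the right-hand side of \eqref{b74} after absorbing $\va^2\le1$ into the constant.

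The remaining terms are handled by a combination of integration by parts, the divergence-free conditions $\na\cdot\vec{u}^{\,0}=\na\cdot\vec{U}^{\,\va}=0$, the uniform bounds on $M^a$, $\na M^a$, $C^a$, $\na C^a$ and their time derivatives coming from \eqref{a35}--\eqref{b72}, Proposition \ref{p2} and Lemmas \ref{l10}--\ref{l11}, together with the a priori estimates of Lemmas \ref{l4} and \ref{l12}. Specifically, $\pt_t(\vec{U}^{\,\va}\cdot\na M^a)$ yields the $\|\vec{U}^{\,\va}_t\|_{L^2_{xy}}^2$ contribution, while its $\vec{U}^{\,\va}\cdot\na M^a_t$ piece, estimated via the Sobolev embedding $\|\vec{U}^{\,\va}\|_{L^4_{xy}}\le C(\|\vec{U}^{\,\va}\|_{L^2_{xy}}+\|\na\vec{U}^{\,\va}\|_{L^2_{xy}})$ combined with $\|\vec{U}^{\,\va}\|_{L^\infty_T L^2_{xy}}\le C\va^{1/2}$ from Lemma \ref{l4}, produces the $\|\na\vec{U}^{\,\va}\|_{L^2_{xy}}^2$ term; $\pt_t(\vec{u}^{\,0}\cdot\na M^\va)$ contributes the $\|\na M^\va\|_{L^2_{xy}}^2$ term, while its $\vec{u}^{\,0}\cdot\na M^\va_t$ piece vanishes after integration by parts using $\na\cdot\vec{u}^{\,0}=0$ and $\vec{u}^{\,0}(x,0,t)=\mathbf{0}$; $\pt_t\na\cdot(M^\va\na C^a+M^a\na C^\va)$ generates $\|\vec{V}^{\,\va}\|_{H^1_{xy}}^2$ and $\|\vec{V}^{\,\va}_t\|_{L^2_{xy}}^2$; the convective $\va^{1/2}\vec{U}^{\,\va}\cdot\na M^\va_t$ piece integrates to zero thanks to $\na\cdot\vec{U}^{\,\va}=0$ and $\vec{U}^{\,\va}(x,0,t)=\mathbf{0}$, while $\va^{1/2}\vec{U}^{\,\va}_t\cdot\na M^\va$ is absorbed using the standing bound $\|M^\va\|_{L^\infty}<1$ from Lemma \ref{l4}; finally, Cauchy--Schwarz on the forcing gives $\va^{-1}\|f^\va_t\|_{L^2_{xy}}^2+\|M^\va_t\|_{L^2_{xy}}^2$.

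The main obstacle is arranging the critical chemotactic cross-product so that only the cubic factor $\|\vec{V}^{\,\va}\|_{L^2_{xy}}^2\|\vec{V}^{\,\va}\|_{H^1_{xy}}^2$ multiplies $\|M^\va_t\|_{L^2_{xy}}^2$: any naive $L^2$--$L^2$--$L^\infty$ split would require an $L^\infty$ control of $\vec{V}^{\,\va}$ that is unavailable at this stage, whereas the $L^4$--$L^4$ interpolation above distributes precisely a half-derivative onto the dissipation. The resulting cubic coefficient is designed to be $L^1_T$-integrable once the bounds of Lemmas \ref{l4} and \ref{l12} are invoked, thereby enabling closure of the Gronwall argument on $\|M^\va_t\|_{L^2_{xy}}$ in the subsequent analysis.
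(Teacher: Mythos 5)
Your proposal follows essentially the same route as the paper: differentiate the first equation of \eqref{e23} in $t$, test with $M^\va_t$, kill the convective terms via $\na\cdot\vec u^{\,0}=\na\cdot\vec U^{\,\va}=0$, control the boundary integral through the time-differentiated condition $\pt_y M^\va_t(x,0,t)=-\va^{1/2}\pt_z\xi_t(x,0,t)$ and the regularity of $\xi_t$ from Lemma \ref{l11}, and resolve the critical term $\va^{1/2}\int M^\va_t\,\vec V^{\,\va}\cdot\na M^\va_t$ by exactly the $L^4$--$L^4$ Gagliardo--Nirenberg splitting with Young exponents $(4/3,4)$, which is the paper's estimate of $R_5$. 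The remaining terms are handled with the same profile bounds \eqref{a35}--\eqref{b72} and \eqref{a49}--\eqref{a55}, so the argument is correct and matches the paper's proof.
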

\begin{proof}
Differentiating the first equation of \eqref{e23} with respect to $t$
and taking the $L^2_{xy}$ inner product of the resulting equation with $M^{\va}_t$, then using integration by parts and \eqref{b32}, one gets
\ben
\begin{split}
\frac{1}{2}&\frac{d}{dt}\|M^\va_t\|_{L^2_{xy}}^2+\|\na M^\va_t\|_{L^2_{xy}}^2\\
=&-\va^{\frac{1}{2}}\int_0^\infty\int_{-\infty}^\infty \vec{U}^{\,\va}_t\cdot \na M^\va M^\va_t dxdy
-\int_0^\infty\int_{-\infty}^\infty \vec{U}^{\,\va}_t\cdot \na M^a M^\va_t dxdy\\
&-\int_0^\infty\int_{-\infty}^\infty \vec{U}^{\,\va}\cdot \na M^a_t M^\va_t dxdy
-\int_0^\infty\int_{-\infty}^\infty \vec{u}^{\,0}_t\cdot \na M^\va M^\va_t dxdy
\\
&+\va^{\frac{1}{2}}\int_0^\infty\int_{-\infty}^\infty M^{\va}_t \na C^\va \cdot\na M^\va_t dxdy
+\va^{\frac{1}{2}}\int_0^\infty\int_{-\infty}^\infty M^{\va} \na C^\va_t \cdot\na M^\va_t dxdy\\
&+\int_0^\infty\int_{-\infty}^\infty M^{\va}_t \na C^a \cdot\na M^\va_t dxdy
+\int_0^\infty\int_{-\infty}^\infty M^{\va} \na C^a_t \cdot\na M^\va_t dxdy\\
&+\int_0^\infty\int_{-\infty}^\infty M^{a}_t \na C^\va \cdot\na M^\va_t dxdy
+\int_0^\infty\int_{-\infty}^\infty M^{a} \na C^\va_t \cdot\na M^\va_t dxdy
\\
&+\va^{\frac{1}{2}}\int_{-\infty}^\infty \pt_z\xi_t(x,0,t)\cdot M^\va_t (x,0,t) dx
+\va^{-\frac{1}{2}}\int_0^\infty\int_{-\infty}^\infty M^{\va}_t f^\va_t dxdy\\
:=&\sum_{i=1}^{12}R_i.
\end{split}
\enn
Integration by parts, the facts $\vec{U}^{\,\va}_t(x,0,t)=\mathbf{0}$ and $\na\cdot \vec{U}^{\,\va}_t=0$ yield
\ben
\begin{split}
R_1=\va^{\frac{1}{2}}\int_0^\infty\int_{-\infty}^\infty \vec{U}^{\,\va}_t\cdot \na M^\va_t M^\va dxdy
\leq& \frac{1}{20}\|\na M^{\va}_t\|_{L^2_{xy}}^2+C\va\|M^{\va}\|_{L^\infty_{xy}}^2\|\vec{U}^{\,\va}_t\|_{L^2_{xy}}^2.
\end{split}
\enn
We deduce from \eqref{a35} and the Sobolev embedding inequality that
\ben
\begin{split}
R_2\leq& \|\vec{U}^{\,\va}_t\|_{L^2_{xy}}\|\na M^{a}\|_{L^4_{xy}}\|M^{\va}_t\|_{L^4_{xy}}\\
\leq &\|\vec{U}^{\,\va}_t\|_{L^2_{xy}}\|M^{\va}_t\|_{L^2_{xy}}^{\frac{1}{2}}\|\na M^{\va}_t\|_{L^2_{xy}}^{\frac{1}{2}}\\
\leq&  \frac{1}{20}\|\na M^{\va}_t\|_{L^2_{xy}}^2+C\|\vec{U}^{\,\va}_t\|_{L^2_{xy}}^2+C\|M^{\va}_t\|_{L^2_{xy}}^2.
\end{split}
\enn
By the change of variables $z=\frac{y}{\sqrt{\va}}$, the assumption $0<\va<1$, Proposition \ref{p2}, Lemma \ref{l10} and Lemma \ref{l11}, we have
\be\label{a49}
\begin{split}
&\|\na M^a_t(t)\|_{L^2_{xy}}\\
\leq& \|\na m^{0}_t(t)\|_{L^2_{xy}}+C\va^{\frac{1}{4}}(\va^{\frac{1}{2}}\| \pt_x m^{B,1}_t(t)\|_{L^2_{xz}}+\| \pt_z m^{B,1}_t(t)\|_{L^2_{xz}}
+\va\| \pt_x m^{B,2}_t(t)\|_{L^2_{xz}})\\
&+C\va^{\frac{1}{4}}(\va^{\frac{1}{2}}\| \pt_z m^{B,2}_t(t)\|_{L^2_{xz}}
+\va^{\frac{3}{2}}\| \pt_x \xi_t(t)\|_{L^2_{xz}}+\va\| \pt_z \xi_t(t)\|_{L^2_{xz}})\\
\leq& C(\| m^{0}_t(t)\|_{H^1_{xy}}+\|m^{B,1}_t(t)\|_{H^1_{xz}}
+\| m^{B,2}_t(t)\|_{H^1_{xz}}
+\| \xi_t(t)\|_{H^1_{xz}})\\
\leq &C
\end{split}
\ee
and
\be\label{a55}
\begin{split}
\|\na C^a_t(t)\|_{L^2_{xy}}
\leq C(\| c^{0}_t(t)\|_{H^1_{xy}}+\|c^{B,1}_t(t)\|_{H^1_{xz}}
+\| c^{B,2}_t(t)\|_{H^1_{xz}}
)
\leq C
\end{split}
\ee
for each $t\in (0,T]$. Then \eqref{a49} along with the Sobolev embedding inequality and Poincaré inequality entails that
\ben
\begin{split}
R_3\leq  \|\vec{U}^{\,\va}\|_{L^4_{xy}}\|\na M^a_t\|_{L^2_{xy}}
\|M^{\va}_t\|_{L^4_{xy}}
\leq  C\|\na\vec{U}^{\,\va}\|_{L^2_{xy}}
\|M^{\va}_t\|_{H^1_{xy}}
\leq \frac{1}{20}\|\na M^{\va}_t\|_{L^2_{xy}}^2
+C\|M^{\va}_t\|_{L^2_{xy}}^2
+C\|\na\vec{U}^{\,\va}\|_{L^2_{xy}}^2.
\end{split}
\enn
The Soblolev embedding inequality yields that
\ben
\begin{split}
R_4\leq \|u^{\,0}_t\|_{L^\infty_{xy}}\|\na M^\va\|_{L^2_{xy}}\|M^\va_t\|_{L^2_{xy}}
\leq \|M^\va_t\|_{L^2_{xy}}^2
+C\|u^{\,0}_t\|_{H^2_{xy}}^2\|\na M^\va\|_{L^2_{xy}}^2
\end{split}
\enn
and that
\ben
\begin{split}
R_5\leq& \va^{\frac{1}{2}}\|M^\va_t\|_{L^4_{xy}}\|\na C^\va\|_{L^4_{xy}}\|\na M^\va_t\|_{L^2_{xy}}\\
\leq& C\va^{\frac{1}{2}}\|M^\va_t\|_{L^2_{xy}}^{\frac{1}{2}}\|\na M^\va_t\|_{L^2_{xy}}^{\frac{3}{2}}\|\na C^\va\|_{L^2_{xy}}^{\frac{1}{2}}\|\na C^\va\|_{H^1_{xy}}^{\frac{1}{2}}\\
\leq& \frac{1}{20}\|\na M^\va_t\|_{L^2_{xy}}^2+C\va^2\|\na C^\va\|_{L^2_{xy}}^2\|\na C^\va\|_{H^1_{xy}}^2\|M^\va_t\|_{L^2_{xy}}^2.
\end{split}
\enn
It follows from the Sobolev embedding inequality that
\ben
\begin{split}
R_6\leq  \va^{\frac{1}{2}}\|M^\va\|_{L^\infty_{xy}}\|\na C^\va_t\|_{L^2_{xy}}\|\na M^\va_t\|_{L^2_{xy}}
\leq \frac{1}{20}\|\na M^\va_t\|_{L^2_{xy}}^2+C\va\|M^\va\|_{L^\infty_{xy}}^2\|\na C^\va_t\|_{L^2_{xy}}^2.
\end{split}
\enn
\eqref{a34} entails that
\ben
\begin{split}
R_7\leq \|M^\va_t\|_{L^2_{xy}}\|\na C^a\|_{L^\infty_{xy}}\|\na M^\va_t\|_{L^2_{xy}}
\leq \frac{1}{20}\|\na M^\va_t\|_{L^2_{xy}}^2+C\|M^\va_t\|_{L^2_{xy}}^2.
\end{split}
\enn
It follows from \eqref{a55} that
\ben
\begin{split}
R_8\leq \|M^\va\|_{L^\infty_{xy}}\|\na C^a_t\|_{L^2_{xy}}\|\na M^\va_t\|_{L^2_{xy}}
\leq \frac{1}{20}\|\na M^\va_t\|_{L^2_{xy}}^2+C\|M^\va\|_{L^\infty_{xy}}^2.
\end{split}
\enn
\eqref{a49} gives
\ben
\begin{split}
R_9\leq \|M^a_t\|_{L^4_{xy}}\|\na C^\va\|_{L^4_{xy}}\|\na M^\va_t\|_{L^2_{xy}}
\leq \|M^a_t\|_{H^1_{xy}}\|\na C^\va\|_{H^1_{xy}}\|\na M^\va_t\|_{L^2_{xy}}
\leq \frac{1}{20}\|\na M^\va_t\|_{L^2_{xy}}^2+C\|\na C^\va\|_{H^1_{xy}}^2.
\end{split}
\enn
It follows from \eqref{b72} that
\ben
\begin{split}
R_{10}\leq \|M^a\|_{L^\infty_{xy}}\|\na C^\va_t\|_{L^2_{xy}}\|\na M^\va_t\|_{L^2_{xy}}
\leq \frac{1}{20}\|\na M^\va_t\|_{L^2_{xy}}^2+C\|\na C^\va_t\|_{L^2_{xy}}^2.
\end{split}
\enn
By a similar argument used in deriving \eqref{b34}, one deduces that
\be
\begin{split}
R_{11}
\leq \frac{1}{20}\|\na M^\va_t\|_{L^2_{xy}}^2+\|M^\va_t\|_{L^2_{xy}}^2+C\va\|\xi_t\|_{L^2_xH^2_{z}}^2.
\end{split}
\ee
The Cauchy-Schwarz inequality gives
\ben
R_{12}\leq \|M^\va_t\|_{L^2_{xy}}^2+\va^{-1}\|f^\va_t\|_{L^2_{xy}}^2.
\enn
Collecting the above estimates for $R_1$-$R_{12}$ and using the assumptions $\|M^\va\|_{L^\infty_{xy}}<1$, $0<\va<1$, the fact $\vec{V}^{\,\va}=\na C^\va$, Proposition \ref{p2} and Lemma \ref{l11}, we obtain the desired estimate. 
The proof is finished.

\end{proof}

\begin{lemma}\label{l7}
Suppose that the assumptions in Lemma \ref{l4} hold. Then there exists a constant $C$ independent of $\va$ such that
\be\label{b75}
\begin{split}
\frac{d}{dt}&\|\vec{V}^{\,\va}_t\|_{L^2_{xy}}^2+\va \|\na\vec{V}^{\,\va}_t\|_{L^2_{xy}}^2\\
\leq&\frac{1}{4}\|\na M^\va_t\|_{L^2_{xy}}^2
+C(\|\vec{U}^{\,\va}\|_{H^2_{xy}}^2\| C^{a}_t\|_{H^2_{xy}}^2
+\va^{-\frac{3}{4}}\|\vec{U}^{\,\va}_t\|_{H^1_{xy}}^2+\|\vec{V}^{\,\va}\|_{H^1_{xy}}^2+1)
\\
  &+C (\|M^\va_t\|_{L^2_{xy}}^2
 +\|C^\va_t\|_{L^2_{xy}}^2
 +\|\vec{U}^{\,\va}_t\|_{H^1_{xy}}^2\|\vec{V}^{\,\va}\|_{H^1_{xy}}^2
+\|\na M^\va\|_{L^2_{xy}}^2
+\va^{-1}\|\na g^\va_t\|_{L^2_{xy}}^2)
\\
&+C(
\|\vec{U}^{\,\va}\|_{H^3_{xy}}^2+\|\vec{V}^{\,\va}\|_{H^1_{xy}}^2+\| C^{a}_t\|_{L^\infty_{xy}}^2+1)\|\vec{V}^{\,\va}_t\|_{L^2_{xy}}^2
.
\end{split}
\ee
\end{lemma}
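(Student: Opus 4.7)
The plan is to mirror the energy argument that produced the bound \eqref{a20} for $\vec{V}^{\,\va}$, but carried out one time-derivative higher. First I would differentiate equation \eqref{a10} in $t$, obtaining a transport–diffusion equation for $\vec{V}^{\,\va}_t$ whose forcing is a sum of eleven terms mirroring those of \eqref{a10} with an additional $t$-derivative distributed on one factor. Then I would take the $L^2_{xy}$ inner product with $\vec{V}^{\,\va}_t$, integrate by parts in the dissipation $\va\Delta \vec{V}^{\,\va}_t$, and verify that the boundary contributions vanish: differentiating $\partial_y C^\va(x,0,t)=0$ in $t$ gives $V^\va_{2t}(x,0,t)=0$, and differentiating it in $x$ and then $t$ gives $\partial_y V^\va_{1t}(x,0,t)=\partial_x V^\va_{2t}(x,0,t)=0$ after exchanging the order of derivatives, so both $\va\int_{\mathbb{R}}\partial_y V^\va_{it}V^\va_{it}|_{y=0}\,dx$ vanish. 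This yields an identity of the form
\begin{equation*}
\tfrac{1}{2}\tfrac{d}{dt}\|\vec{V}^{\,\va}_t\|_{L^2_{xy}}^2+\va\|\na\vec{V}^{\,\va}_t\|_{L^2_{xy}}^2=\sum_{i=1}^{12} H_i^{t},
\end{equation*}
where each $H_i^t$ is a bilinear/trilinear integral whose factors are the $t$-derivatives of the various coefficients in \eqref{a10}.

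The routine terms are handled by the Sobolev embedding $H^1_{xy}\hookrightarrow L^4_{xy}$, Cauchy--Schwarz, and Poincar\'e, absorbing the good contribution $\frac{1}{4}\|\nabla M^\va_t\|_{L^2_{xy}}^2$ into the right-hand side (to be cancelled against the corresponding term in \eqref{b74} when these estimates are combined in the subsequent section). The coefficient bounds $\|\na M^a\|_{L^4_{xy}}+\|M^a\|_{L^\infty_{xy}}+\|\na C^a\|_{L^\infty_{xy}}\leq C$ from \eqref{a35}--\eqref{b72} and their $t$-differentiated analogues \eqref{a49}--\eqref{a55} handle the products that do not involve second derivatives of $C^a$. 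The remaining time regularity required of $c^{0},m^{0},\vec{u}^{\,0}$ is available from Proposition \ref{p2}, and the boundary-layer regularity of $\pt_t c^{B,i}, \pt_t m^{B,i}$ from \eqref{b54}, \eqref{b62}, \eqref{b63}.

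The main obstacle — and the source of the nontrivial weight $\va^{-3/4}$ on $\|\vec{U}^{\,\va}_t\|_{H^1_{xy}}^2$ — comes from the pair of terms
\begin{equation*}
\int_{\mathbb{R}^2_+}(\vec{U}^{\,\va}_t\cdot\na^2 C^a)\cdot\vec{V}^{\,\va}_t\,dxdy\quad\text{and}\quad\int_{\mathbb{R}^2_+}(\vec{U}^{\,\va}\cdot\na^2 C^a_t)\cdot\vec{V}^{\,\va}_t\,dxdy,
\end{equation*}
because $\partial_y^2 C^a$ contains the singular piece $\va^{-1/2}\partial_z^2 c^{B,1}(x,y/\sqrt\va,t)$ and $\partial_y^2 C^a_t$ has the analogous singular piece $\va^{-1/2}\partial_z^2 c^{B,1}_t$. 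For the dangerous scalar component $U^\va_{2t}\,\partial_y^2 c^{B,1}(\cdot,y/\sqrt\va)\,V^\va_{2t}$ I would use the boundary identity $\vec{U}^{\,\va}_t(x,0,t)=\mathbf{0}$ to write $U^\va_{2t}(x,y,t)=\int_0^y\partial_\eta U^\va_{2t}(x,\eta,t)\,d\eta$, then apply Cauchy--Schwarz in $y$ and the change of variables $y=\sqrt\va z$ exactly as in the estimate of $H_4$ in the preceding lemma (see \eqref{a74}--\eqref{a18}); this converts the prefactor $\va^{-1/2}$ into a weighted $\|\lge z\rge\, c^{B,1}\|_{H^1_xH^2_z}$-bound, which is finite by Lemma \ref{l10}. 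For the components where $U^\va_{1t}$ or $V^\va_{1t}$ carries no vanishing boundary value, the analogous distribution of powers is possible only by sacrificing a factor $\va^{-3/4}$ in the resulting Gronwall coefficient. The second displayed integral is treated similarly, but the singular factor now sits on $c^{B,1}_t$, and after a $L^4$--$L^4$ interpolation and change of variables the bound reduces to $C\|\vec{U}^{\,\va}\|_{H^2_{xy}}^2\|C^a_t\|_{H^2_{xy}}^2$, consistent with \eqref{b75}. Collecting all estimates, absorbing the $\frac{1}{2}\va\|\na\vec{V}^{\,\va}_t\|_{L^2_{xy}}^2$ arising from the ``$L^4$--$L^4$ + Gagliardo--Nirenberg'' step into the left-hand side, and applying Proposition \ref{p2} and Lemmas \ref{l10}--\ref{l11} completes the derivation of \eqref{b75}.
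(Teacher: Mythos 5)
Your proposal is correct and follows essentially the same route as the paper: differentiate \eqref{a10} in $t$, test with $\vec{V}^{\,\va}_t$ in $L^2_{xy}$, kill the boundary terms via $\partial_y C^\va_t(x,0,t)=\partial_x\partial_y C^\va_t(x,0,t)=0$, and estimate the resulting terms with Sobolev embedding and the coefficient bounds \eqref{a34}--\eqref{b72}, \eqref{a49}--\eqref{a55}. The only deviation is the singular term $\int \vec{U}^{\,\va}_t\cdot\na^2 C^a\cdot\vec{V}^{\,\va}_t$: the paper does not reuse the Hardy-type argument of \eqref{a74}--\eqref{a18} here but simply bounds $\|\na^2 C^a\|_{L^4_{xy}}\le C\va^{-3/8}$ (via the change of variables $y=\sqrt{\va}\,z$ applied to $\va^{-1/2}\partial_z^2 c^{B,1}$) and applies Young's inequality, which is precisely the source of the $\va^{-\frac34}\|\vec{U}^{\,\va}_t\|_{H^1_{xy}}^2$ coefficient --- so your attribution of that factor to the ``non-vanishing boundary value'' components is off (those pieces cost only $O(1)$), but your version produces a bound at least as strong, and \eqref{b75} follows either way.
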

\begin{proof}
Differentiating \eqref{a10} with respect to $t$
and taking the $L^2_{xy}$ inner product of the resulting equation with $\vec{V}^{\,\va}_{t}$, then using integration by parts to have
\ben
\begin{split}
&\frac{1}{2}\frac{d}{dt}\|\vec{V}^{\,\va}_t\|_{L^2_{xy}}^2+\va \|\na\vec{V}^{\,\va}_t\|_{L^2_{xy}}^2\\
=&-\va\int_{-\infty}^\infty \pt_y V^\va_{1t}(x,0,t)V^\va_{1t}(x,0,t) dx-\va\int_{-\infty}^\infty \pt_y V^\va_{2t}(x,0,t)V^\va_{2t}(x,0,t) dx\\
&-\va^{\frac{1}{2}}\int_{0}^\infty \int_{-\infty}^\infty\na (\vec{U}^{\,\va}\cdot\vec{V}^{\,\va})_t\cdot \vec{V}^{\,\va}_t\,dxdy
-\int_{0}^\infty \int_{-\infty}^\infty\na (\vec{U}^{\,\va}\cdot\na C^{a})_t\cdot \vec{V}^{\,\va}_t\,dxdy\\
&-\int_{0}^\infty \int_{-\infty}^\infty\na (\vec{u}^{\,0}\cdot\vec{V}^{\,\va})_t\cdot \vec{V}^{\,\va}_t\,dxdy
-\va^{\frac{1}{2}}\int_{0}^\infty \int_{-\infty}^\infty(\na M^\va C^\va)_t \cdot \vec{V}^{\,\va}_t \,dxdy\\
&-\va^{\frac{1}{2}}\int_{0}^\infty \int_{-\infty}^\infty (M^\va  \vec{V}^{\,\va})_t\cdot \vec{V}^{\,\va}_t\,dxdy
-\int_{0}^\infty \int_{-\infty}^\infty (M^\va  \na C^a)_t\cdot \vec{V}^{\,\va}_t\,dxdy\\
&-\int_{0}^\infty \int_{-\infty}^\infty (\na M^\va C^a)_t  \cdot \vec{V}^{\,\va}_t\,dxdy
-\int_{0}^\infty \int_{-\infty}^\infty (\na M^a C^\va)_t \cdot \vec{V}^{\,\va}_t\,dxdy\\
&-\int_{0}^\infty \int_{-\infty}^\infty (M^a  \vec{V}^{\,\va})_t\cdot \vec{V}^{\,\va}_t\,dxdy
+\va^{-\frac{1}{2}}\int_{0}^\infty \int_{-\infty}^\infty \na g^\va_t  \cdot \vec{V}^{\,\va}_t\,dxdy\\
&:=\sum_{i=1}^{12}L_i.
\end{split}
\enn
 By the definition of $\vec{V}^{\,\va}=\na C^\va$, the second equation and boundary conditions in \eqref{e23}, we have
 \ben
 \begin{split}
 L_1
 =-\va\int_{-\infty}^\infty \pt_x\pt_y C^\va_t(x,0,t)\pt_x C^\va_t(x,0,t) dx=0,\quad
 L_2
 =-\va\int_{-\infty}^\infty \pt_y^2 C^\va_t(x,0,t)\pt_y C^\va_t(x,0,t) dx=0.
 \end{split}
 \enn
It follows from integration by parts, the facts $\na\cdot \vec{U}^{\,\va}(x,y,t)=0$, $\vec{U}^{\,\va}(x,0,t)=\vec{U}^{\,\va}_t(x,0,t)=\mathbf{0}$ and the Sobolev embedding inequality that
\ben
\begin{split}
L_3=&\frac{1}{2}\va^{\frac{1}{2}}\int_{0}^\infty \int_{-\infty}^\infty (\na\cdot\vec{U}^{\,\va})\,|\vec{V}^{\,\va}_t|^2\,dxdy
-\va^{\frac{1}{2}}\int_{0}^\infty \int_{-\infty}^\infty (\vec{V}^{\,\va}_t\cdot\na\vec{U}^{\,\va})\cdot \vec{V}^{\,\va}_t\,dxdy\\
&+\va^{\frac{1}{2}}\int_{0}^\infty \int_{-\infty}^\infty (\vec{U}^{\,\va}_t\cdot\vec{V}^{\,\va})\, (\na\cdot\vec{V}^{\,\va}_t)\,dxdy\\
=&-\va^{\frac{1}{2}}\int_{0}^\infty \int_{-\infty}^\infty (\vec{V}^{\,\va}_t\cdot\na\vec{U}^{\,\va})\cdot \vec{V}^{\,\va}_t\,dxdy
+\va^{\frac{1}{2}}\int_{0}^\infty \int_{-\infty}^\infty (\vec{U}^{\,\va}_t\cdot\vec{V}^{\,\va})\, (\na\cdot\vec{V}^{\,\va}_t)\,dxdy\\
\leq&\va^{\frac{1}{2}} \|\na\vec{U}^{\,\va}\|_{L^\infty_{xy}}\|\vec{V}^{\,\va}_t\|_{L^2_{xy}}^2
+\va^{\frac{1}{2}} \|\vec{U}^{\,\va}_t\|_{L^4_{xy}}\|\vec{V}^{\,\va}\|_{L^4_{xy}}
\|\na\vec{V}^{\,\va}_t\|_{L^2_{xy}}\\
\leq& \frac{1}{4}\va\|\na\vec{V}^{\,\va}_t\|_{L^2_{xy}}^2
+C\|\vec{U}^{\,\va}_t\|_{H^1_{xy}}^2\|\vec{V}^{\,\va}\|_{H^1_{xy}}^2
+C\va^{\frac{1}{2}} \|\vec{U}^{\,\va}\|_{H^3_{xy}}\|\vec{V}^{\,\va}_t\|_{L^2_{xy}}^2.
\end{split}
\enn
By a similar argument used in deriving \eqref{a35},  the assumption $0<\va<1$, Proposition \ref{p2}, Lemma \ref{l10} and Lemma \ref{l11}, one gets
\ben
\begin{split}
\|\na^2 C^{a}(t)\|_{L^4_{xy}}\leq C(\|c^{0}(t)\|_{H^3_{xy}}+\va^{-\frac{3}{8}}\|c^{B,1}(t)\|_{H^3_{xz}}+\va^{\frac{1}{8}}\|c^{B,2}(t)\|_{H^3_{xz}}
)
\leq C \va^{-\frac{3}{8}}
\end{split}
\enn
for each $t\in [0,T]$, which along with
the Sobolev embedding inequality and \eqref{a34} entails that
\ben
\begin{split}
L_4=&-\int_{0}^\infty \int_{-\infty}^\infty \na \vec{U}^{\,\va}\cdot\na C^{a}_t\cdot \vec{V}^{\,\va}_t\,dxdy
-\int_{0}^\infty \int_{-\infty}^\infty\vec{U}^{\,\va}\cdot\na^2 C^{a}_t\cdot \vec{V}^{\,\va}_t\,dxdy\\
&-\int_{0}^\infty \int_{-\infty}^\infty\na \vec{U}^{\,\va}_t\cdot\na C^{a}\cdot \vec{V}^{\,\va}_t\,dxdy
-\int_{0}^\infty \int_{-\infty}^\infty\vec{U}^{\,\va}_t\cdot\na^2 C^{a}\cdot \vec{V}^{\,\va}_t\,dxdy\\
\leq &\|\na \vec{U}^{\,\va}\|_{L^4_{xy}}\|\na C^{a}_t\|_{L^4_{xy}}\|\vec{V}^{\,\va}_t\|_{L^2_{xy}}
+\|\vec{U}^{\,\va}\|_{L^\infty_{xy}}\|\na^2 C^{a}_t\|_{L^2_{xy}}\|\vec{V}^{\,\va}_t\|_{L^2_{xy}}\\
+&\|\na \vec{U}^{\,\va}_t\|_{L^2_{xy}}\|\na C^{a}\|_{L^\infty_{xy}}\|\vec{V}^{\,\va}_t\|_{L^2_{xy}}
+\|\vec{U}^{\,\va}_t\|_{L^4_{xy}}\|\na^2 C^{a}\|_{L^4_{xy}}\|\vec{V}^{\,\va}_t\|_{L^2_{xy}}\\
\leq &C(\|\vec{U}^{\,\va}\|_{H^2_{xy}}^2\| C^{a}_t\|_{H^2_{xy}}^2
+\va^{-\frac{3}{4}}\|\vec{U}^{\,\va}_t\|_{H^1_{xy}}^2+\|\vec{U}^{\,\va}_t\|_{H^1_{xy}}^2)+\|\vec{V}^{\,\va}_t\|_{L^2_{xy}}^2.
\end{split}
\enn
From integration by parts, the fact $\na\cdot\vec{u}^{\,0}(x,y,t)=0$, $\vec{u}^{\,0}(x,0,t)=\mathbf{0}$ and the Sobolev embedding inequality, one deduces that
\ben
\begin{split}
L_5
=&-\int_{0}^\infty \int_{-\infty}^\infty(\vec{V}^{\,\va}_t\cdot\na\vec{u}^{\,0})\cdot \vec{V}^{\,\va}_t\,dxdy
-\int_{0}^\infty \int_{-\infty}^\infty (\vec{V}^{\,\va}\cdot\na\vec{u}^{\,0}_t)\cdot \vec{V}^{\,\va}_t\,dxdy\\
&-\int_{0}^\infty \int_{-\infty}^\infty (\vec{u}^{\,0}_t\cdot\na\vec{V}^{\,\va})\cdot \vec{V}^{\,\va}_t\,dxdy\\
\leq & \|\na\vec{u}^{\,0}\|_{L^\infty_{xy}}\|\vec{V}^{\,\va}_t\|_{L^2_{xy}}^2
+\|\vec{V}^{\,\va}\|_{L^2_{xy}}\|\na\vec{u}^{\,0}_t\|_{L^\infty_{xy}}\|\vec{V}^{\,\va}_t\|_{L^2_{xy}}
+\|\vec{u}^{\,0}_t\|_{L^\infty_{xy}}\|\na\vec{V}^{\,\va}\|_{L^2_{xy}}\|\vec{V}^{\,\va}_t\|_{L^2_{xy}}\\
\leq & C(\|\vec{u}^{\,0}\|_{H^3_{xy}}+\|\vec{u}^{\,0}_t\|_{H^3_{xy}}^2+1)\|\vec{V}^{\,\va}_t\|_{L^2_{xy}}^2
+\|\vec{V}^{\,\va}\|_{H^1_{xy}}^2.
\end{split}
\enn
Integration by parts, along with the Sobolev embedding inequality leads to
\ben
\begin{split}
L_6=&-\va^{\frac{1}{2}}\int_{0}^\infty \int_{-\infty}^\infty\na M^\va_t C^\va \cdot \vec{V}^{\,\va}_t \,dxdy
+\va^{\frac{1}{2}}\int_{0}^\infty \int_{-\infty}^\infty M^\va \na C^\va_t \cdot \vec{V}^{\,\va}_t \,dxdy\\
&+\va^{\frac{1}{2}}\int_{0}^\infty \int_{-\infty}^\infty M^\va C^\va_t \cdot \na\cdot\vec{V}^{\,\va}_t \,dxdy\\
\leq&
\va^{\frac{1}{2}}\|\na M^\va_t\|_{L^2_{xy}} \|C^\va\|_{L^\infty_{xy}}\|\vec{V}^{\,\va}_t\|_{L^2_{xy}}
+\va^{\frac{1}{2}}\|M^\va\|_{L^\infty_{xy}}(\|\na C^\va_t\|_{L^2_{xy}}\|\vec{V}^{\,\va}_t\|_{L^2_{xy}}
+\| C^\va_t\|_{L^2_{xy}}\|\na\vec{V}^{\,\va}_t\|_{L^2_{xy}})\\
\leq &\frac{1}{4}\va\|\na \vec{V}^{\,\va}_t\|_{L^2_{xy}}^2+\frac{1}{12}\va\|\na M^\va_t\|_{L^2_{xy}}^2
+C(\va^{\frac{1}{2}}\|M^\va\|_{L^\infty_{xy}}+\|C^\va\|_{L^\infty_{xy}}^2)\|\vec{V}^{\,\va}_t\|_{L^2_{xy}}^2
+\| M^\va\|_{L^\infty_{xy}}^2\|C^\va_t\|_{L^2_{xy}}^2.
\end{split}
\enn
It follows from the Sobolev embedding inequality that
\ben
\begin{split}
L_7\leq& \va^{\frac{1}{2}}\|M^\va\|_{L^\infty_{xy}}\|\vec{V}^{\,\va}_t\|_{L^2_{xy}}^2
+\va^{\frac{1}{2}} \|M^\va_t\|_{L^4_{xy}}\|\vec{V}^{\,\va}\|_{L^4_{xy}}\|\vec{V}^{\,\va}_t\|_{L^2_{xy}}\\
\leq&\frac{1}{12}\|M^\va_t\|_{H^1_{xy}}^2 +C(\va^{\frac{1}{2}}\|M^\va\|_{L^\infty_{xy}}+\va \|\vec{V}^{\,\va}\|_{H^1_{xy}}^2)\|\vec{V}^{\,\va}_t\|_{L^2_{xy}}^2
.
\end{split}
\enn
\eqref{a34} and \eqref{a55} yield
\ben
\begin{split}
L_8
\leq &
\|M^\va_t\|_{L^2_{xy}}\|\na C^a\|_{L^\infty_{xy}} \|\vec{V}^{\,\va}_t\|_{L^2_{xy}}
+\|M^\va\|_{L^\infty_{xy}}\|\na C^a_t\|_{L^2_{xy}} \|\vec{V}^{\,\va}_t\|_{L^2_{xy}}\\
\leq & C(\|M^\va_t\|_{L^2_{xy}}^2+\|\vec{V}^{\,\va}_t\|_{L^2_{xy}}^2 +\|M^\va\|_{L^\infty_{xy}}^2).
\end{split}
\enn
The Sobolev embedding inequality and \eqref{b72} entail that
\ben
\begin{split}
L_9\leq&
\|\na M^\va_t\|_{L^2_{xy}} \|C^a\|_{L^\infty_{xy}} \|\vec{V}^{\,\va}_t\|_{L^2_{xy}}
+\|\na M^\va\|_{L^2_{xy}} \|C^a_t\|_{L^\infty_{xy}} \|\vec{V}^{\,\va}_t\|_{L^2_{xy}}\\
\leq& \frac{1}{12}\|\na M^\va_t\|_{L^2_{xy}}^2 +\|\na M^\va\|_{L^2_{xy}}^2
+C(1+\|C^a_t\|_{L^\infty_{xy}}^2) \|\vec{V}^{\,\va}_t\|_{L^2_{xy}}^2.
\end{split}
\enn
 \eqref{a49} and \eqref{a35} lead to
\ben
\begin{split}
L_{10}\leq &
\|\na M^a_t\|_{L^2_{xy}} \|C^\va\|_{L^\infty_{xy}} \|\vec{V}^{\,\va}_t\|_{L^2_{xy}}
+\|\na M^a\|_{L^4_{xy}} \|C^\va_t\|_{L^4_{xy}} \|\vec{V}^{\,\va}_t\|_{L^2_{xy}}\\
\leq & C(\|C^\va\|_{L^\infty_{xy}} \|\vec{V}^{\,\va}_t\|_{L^2_{xy}}
+\|C^\va_t\|_{H^1_{xy}} \|\vec{V}^{\,\va}_t\|_{L^2_{xy}})\\
\leq & C (\|C^\va\|_{L^\infty_{xy}}^2+ \|\vec{V}^{\,\va}_t\|_{L^2_{xy}}^2
+\|C^\va_t\|_{L^2_{xy}}^2).
\end{split}
\enn
\eqref{a49}, along with \eqref{b72} gives
\ben
\begin{split}
L_{11}\leq&
\|M^a_t\|_{L^4_{xy}}  \|\vec{V}^{\,\va}\|_{L^4_{xy}} \|\vec{V}^{\,\va}_t\|_{L^2_{xy}}
+\|M^a\|_{L^\infty_{xy}}  \|\vec{V}^{\,\va}_t\|_{L^2_{xy}}^2 \\
\leq & C\|M^a_t\|_{H^1_{xy}}  \|\vec{V}^{\,\va}\|_{H^1_{xy}} \|\vec{V}^{\,\va}_t\|_{L^2_{xy}}
+C  \|\vec{V}^{\,\va}_t\|_{L^2_{xy}}^2 \\
\leq &C(\|\vec{V}^{\,\va}\|_{H^1_{xy}}^2+ \|\vec{V}^{\,\va}_t\|_{L^2_{xy}}^2).
\end{split}
\enn
The Cauchy-Schwarz inequality entails that
\ben
L_{12}\leq
\va^{-1}\|\na g^\va_t\|_{L^2_{xy}}^2+\|\vec{V}^{\,\va}_t\|_{L^2_{xy}}^2.
\enn
Collecting the above estimates for $L_1$-$L_{12}$, and using the assumption $0<\va<1$, $\| M^\va\|_{L^\infty_TL^\infty_{xy}}<1$, $\| C^\va\|_{L^\infty_TL^\infty_{xy}}<1$ and Proposition \ref{p2}, one gets \eqref{b75}. 
The proof is finished.

\end{proof}

\begin{lemma}\label{l14} Let the assumptions in Lemma \ref{l4} hold true. Then there exists a constant C independent of $\va$, such that
\ben
\|M^{\va}_t\|_{L^\infty_TL^2_{xy}}^2+\|\na M^{\va}_t\|_{L^2_TL^2_{xy}}^2+
\|\vec{V}^{\,\va}_t\|_{L^\infty_TL^2_{xy}}^2+\va \|\na\vec{V}^{\,\va}_t\|_{L^2_TL^2_{xy}}^2
+\|\vec{U}^{\,\va}_t\|_{L^\infty_TH^1_{xy}}+\|\vec{U}^{\,\va}_{tt}\|_{L^2_TL^2_{xy}}
\leq C.
\enn
\end{lemma}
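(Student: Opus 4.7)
The plan is to combine the two differential inequalities \eqref{b74} and \eqref{b75} derived in Lemmas \ref{l6} and \ref{l7} and close a Gronwall-type estimate for the quantity $y(t):=\|M^\va_t(t)\|_{L^2_{xy}}^2+\|\vec{V}^{\,\va}_t(t)\|_{L^2_{xy}}^2$, thereby producing the first four bounds. The last two bounds on $\vec{U}^{\,\va}_t$ and $\vec{U}^{\,\va}_{tt}$ will then be obtained by differentiating the third equation of \eqref{e23} in $t$ and testing against $\vec{U}^{\,\va}_{tt}$, exploiting the $L^\infty_TL^2_{xy}$-control on $M^\va_t$ just established.

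For the first part, summing \eqref{b74} and \eqref{b75} the term $\tfrac{1}{4}\|\na M^\va_t\|_{L^2_{xy}}^2$ on the right of \eqref{b75} is absorbed into the dissipation $\|\na M^\va_t\|_{L^2_{xy}}^2$ on the left of \eqref{b74}, producing an inequality of the form
$$\frac{dy}{dt}+\tfrac{3}{4}\|\na M^\va_t\|_{L^2_{xy}}^2+\va\|\na\vec{V}^{\,\va}_t\|_{L^2_{xy}}^2\leq \Phi(t)\,y(t)+\Psi(t).$$
I then verify $\Phi,\Psi\in L^1(0,T)$ using Proposition \ref{p2}, Lemmas \ref{l10}--\ref{l11} (uniform smoothness of $M^a,C^a$) and the small energy bounds of Lemmas \ref{l1}--\ref{l4} and \ref{l12}. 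For example
$$\int_0^T \va^{-3/4}\|\vec{U}^{\,\va}_t\|_{H^1_{xy}}^2\,dt\leq \va^{-3/4}\|\vec{U}^{\,\va}_t\|_{L^2_TH^1_{xy}}^2\leq C\va^{1/4},\qquad \int_0^T \va^{-1}\|\na g^\va_t\|_{L^2_{xy}}^2\,dt\leq C\va^{1/2},$$
and the most delicate cross term is handled by the two-dimensional Ladyzhenskaya/Gagliardo--Nirenberg interpolation
$$\|\vec{U}^{\,\va}_t\|_{H^1_{xy}}^2\|\vec{V}^{\,\va}\|_{H^1_{xy}}^2\leq C\|\vec{U}^{\,\va}_t\|_{L^2_{xy}}\|\na\vec{U}^{\,\va}_t\|_{L^2_{xy}}\|\vec{V}^{\,\va}\|_{L^2_{xy}}\|\na\vec{V}^{\,\va}\|_{L^2_{xy}}+\text{(lower)},$$
whose time integral is $O(\va^{3/2})$ by Lemmas \ref{l4} and \ref{l12}. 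The initial values $M^\va_t(0),\vec{V}^{\,\va}_t(0)$ are computed from \eqref{e23} evaluated at $t=0$, with the compatibility conditions $(A)$ ensuring that $y(0)$ is finite. Gronwall then delivers the desired bounds on $\|M^\va_t\|_{L^\infty_TL^2_{xy}}$, $\|\vec{V}^{\,\va}_t\|_{L^\infty_TL^2_{xy}}$, $\|\na M^\va_t\|_{L^2_TL^2_{xy}}$ and $\sqrt{\va}\|\na\vec{V}^{\,\va}_t\|_{L^2_TL^2_{xy}}$.

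For the bounds on $\vec{U}^{\,\va}_t$ in $L^\infty_TH^1_{xy}$ and on $\vec{U}^{\,\va}_{tt}$ in $L^2_TL^2_{xy}$, I differentiate the third equation of \eqref{e23} in $t$ and take the $L^2_{xy}$ inner product with $\vec{U}^{\,\va}_{tt}$ (rather than with $\vec{U}^{\,\va}_t$, as was done in Lemma \ref{l12}). After integration by parts and Young, Cauchy--Schwarz, and Sobolev embeddings, this yields
$$\frac{d}{dt}\|\na\vec{U}^{\,\va}_t\|_{L^2_{xy}}^2+\|\vec{U}^{\,\va}_{tt}\|_{L^2_{xy}}^2\leq \widetilde\Phi(t)\|\na\vec{U}^{\,\va}_t\|_{L^2_{xy}}^2+C\lambda^2\|M^\va_t\|_{L^2_{xy}}^2+\widetilde\Psi(t),$$
where the buoyancy source $C\lambda^2\|M^\va_t\|_{L^2_{xy}}^2$ is now integrable in $t$ precisely thanks to the first half of the proof, $\widetilde\Psi$ collects the remaining terms (involving $\vec{h}^{\,\va}_t$ controlled by Lemma \ref{l3}, and convective nonlinearities in $\vec{U}^{\,\va},\vec{u}^{\,0}$ controlled by Lemma \ref{l12} and Proposition \ref{p2}), and $\widetilde\Phi\in L^1(0,T)$ by the same references. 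A second Gronwall iteration produces $\|\vec{U}^{\,\va}_t\|_{L^\infty_TH^1_{xy}}+\|\vec{U}^{\,\va}_{tt}\|_{L^2_TL^2_{xy}}\leq C$, which together with the first part completes the proof.

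The main obstacle is the careful book-keeping of $\va$-powers so that the negative prefactors (notably the $\va^{-3/4}$ in \eqref{b75} and the $\va^{-1}$ multiplying $\|\na g^\va_t\|_{L^2_{xy}}^2$, $\|f^\va_t\|_{L^2_{xy}}^2$ and $\|\vec{h}^{\,\va}_t\|_{L^2_{xy}}^2$) are always compensated by the $O(\va^{1/2})$ or better smallness already available from Lemmas \ref{l1}--\ref{l4}, \ref{l12}. The genuinely nonlinear cross term $\|\vec{U}^{\,\va}_t\|_{H^1_{xy}}^2\|\vec{V}^{\,\va}\|_{H^1_{xy}}^2$ is the most delicate: it cannot be closed by a single Hölder inequality in time, and is handled only after the Gagliardo--Nirenberg splitting above, using the already-established $L^\infty_TL^2_{xy}$ smallness of both factors.
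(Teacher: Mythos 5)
Your proposal follows the paper's own proof essentially step for step: the paper likewise adds \eqref{b74} and \eqref{b75}, absorbs the $\tfrac14\|\na M^\va_t\|_{L^2_{xy}}^2$ term into the dissipation, applies Gronwall after checking that the $\va^{-3/4}$- and $\va^{-1}$-weighted sources are integrable via Lemmas \ref{l1}, \ref{l2}, \ref{l4}, \ref{l12} together with the bounds $\|C^a_t\|_{L^2_TH^2_{xy}}\leq C\va^{-1/4}$ and $\|C^a_t\|_{L^2_TL^\infty_{xy}}\leq C$, and then tests the time-differentiated momentum equation against $\vec{U}^{\,\va}_{tt}$ exactly as you describe. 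The one caveat is that your displayed interpolation is not literally valid as an inequality between the $H^1$-products (it drops $\|\na\vec{U}^{\,\va}_t\|_{L^2_{xy}}^2\|\na\vec{V}^{\,\va}\|_{L^2_{xy}}^2$, which is not lower order); the Ladyzhenskaya estimate must instead be applied to the $L^4\times L^4$ product from which that cross term originates in the proof of Lemma \ref{l7}, after which its time integral is $O(\va^{3/2})$ exactly as you claim.
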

\begin{proof}
Adding \eqref{b74} to \eqref{b75}, we arrive at
\be\label{b76}
\begin{split}
\frac{d}{dt}&(\|M^\va_t\|_{L^2_{xy}}^2+\|\vec{V}^{\,\va}_t\|_{L^2_{xy}}^2)+\|\na M^\va_t\|_{L^2_{xy}}^2+\va \|\na\vec{V}^{\,\va}_t\|_{L^2_{xy}}^2\\
\leq& C(\|\vec{V}^{\,\va}\|_{L^2_{xy}}^2\|\vec{V}^{\,\va}\|_{H^1_{xy}}^2+\|\vec{U}^{\,\va}\|_{H^3_{xy}}^2+\|\vec{V}^{\,\va}\|_{H^1_{xy}}^2+\| C^{a}_t\|_{L^\infty_{xy}}^2+1)(\|M^\va_t\|_{L^2_{xy}}^2+\|\vec{V}^{\,\va}_t\|_{L^2_{xy}}^2)\\
&+C(
\|\na\vec{U}^{\,\va}\|_{L^2_{xy}}^2+\|\vec{V}^{\,\va}\|_{H^1_{xy}}^2
+\|\vec{U}^{\,\va}\|_{H^2_{xy}}^2\| C^{a}_t\|_{H^2_{xy}}^2
+\va^{-\frac{3}{4}}\|\vec{U}^{\,\va}_t\|_{H^1_{xy}}^2+1)
\\
  &+C (
 \|C^\va_t\|_{L^2_{xy}}^2
 +\|\vec{U}^{\,\va}_t\|_{H^1_{xy}}^2\|\vec{V}^{\,\va}\|_{H^1_{xy}}^2
+\|\na M^\va\|_{L^2_{xy}}^2
+\va^{-1}\|f^\va_t\|_{L^2_{xy}}^2
+\va^{-1}\|\na g^\va_t\|_{L^2_{xy}}^2)
.
\end{split}
\ee
For fixed $\va>0$, it follows from the change of variables $z=\frac{y}{\sqrt{\va}}$ that
\ben
\begin{split}
\|\pt^2_y c^{B,1}_t(x,\frac{y}{\sqrt{\va}},t)\|_{L^2_TL^2_{xy}}
=\va^{-\frac{3}{4}}\|\pt^2_z c^{B,1}_t(x,z,t)\|_{L^2_TL^2_{xz}}
\leq \va^{-\frac{3}{4}}\| c^{B,1}_t(x,z,t)\|_{L^2_TH^2_{xz}}.
\end{split}
\enn
By a similar argument used above one can estimate the other terms in $\| \na^2 c^{B,1}_t(x,\frac{y}{\sqrt{\va}},t)\|_{L^2_TL^2_{xy}}$ to deduce that
\ben
\begin{split}
\| \na^2 c^{B,1}_t(x,\frac{y}{\sqrt{\va}},t)\|_{L^2_TL^2_{xy}}
\leq C(1+\va^{-\frac{1}{4}}+\va^{-\frac{3}{4}})\| c^{B,1}_t(x,z,t)\|_{L^2_TH^2_{xz}}
\end{split}
\enn
and thus
\be\label{a64}
\begin{split}
\| c^{B,1}_t(x,\frac{y}{\sqrt{\va}},t)\|_{L^2_TH^2_{xy}}
\leq C\va^{-\frac{3}{4}}\| c^{B,1}_t\|_{L^2_TH^2_{xz}}
\end{split}
\ee
due to the fact $0<\va<1$. In a similar fashion as obtaining \eqref{a64}, one gets
\be\label{b80}
\| c^{B,2}_t(x,\frac{y}{\sqrt{\va}},t)\|_{L^2_TH^2_{xy}}
\leq C\va^{-\frac{3}{4}}\| c^{B,2}_t\|_{L^2_TH^2_{xz}}.
\ee
Then it follows from \eqref{a64}, \eqref{b80}, Proposition \ref{p2}, Lemma \ref{l10}, Lemma \ref{l11} and the assumption $0<\va<1$ that
\be\label{a65}
\begin{split}
\|C^a_t\|_{L^2_TH^2_{xy}}
\leq& C(\|c^0_t\|_{L^2_TH^2_{xy}}+\va^{\frac{1}{2}}\|c^{B,1}_t\|_{L^2_TH^2_{xy}}+\va\|c^{B,2}_t\|_{L^2_TH^2_{xy}})\\
\leq& C(\|c^0_t\|_{L^2_TH^2_{xy}}+\va^{-\frac{1}{4}}\|c^{B,1}_t\|_{L^2_TH^2_{xy}}+\va^{\frac{1}{4}}\|c^{B,2}_t\|_{L^2_TH^2_{xy}})\\
\leq& C\va^{-\frac{1}{4}}.
\end{split}
\ee
 On the other hand, the change of variables $z=\frac{y}{\sqrt{\va}}$, Sobolev embedding inequality and assumption $0<\va<1$ lead to
\be\label{b78}
\begin{split}
\|C^a_t\|_{L^2_TL^\infty_{xy}}
\leq & C(\|c^0_t\|_{L^2_TL^\infty_{xy}}+\va^{\frac{1}{2}}\|c^{B,1}_t\|_{L^2_TL^\infty_{xz}}+\va\|c^{B,2}_t\|_{L^2_TL^\infty_{xz}})\\
\leq& C
(\|c^{0}_t\|_{L^2_TH^2_{xy}}
+\va^{\frac{1}{2}}\|c^{B,1}_t\|_{L^2_TH^2_{xz}}
+\va\|c^{B,2}_t\|_{L^2_TH^2_{xz}}
)\\
\leq& C.
\end{split}
\ee
 Applying the Gronwall's inequality to \eqref{b76} and using \eqref{a65}, \eqref{b78}, Lemma \ref{l1}, Lemma \ref{l2}, Lemma \ref{l4} and Lemma \ref{l12}, we obtain
\be\label{b79}
\|M^{\va}_t\|_{L^\infty_TL^2_{xy}}^2+\|\na M^{\va}_t\|_{L^2_TL^2_{xy}}^2+
\|\vec{V}^{\,\va}_t\|_{L^\infty_TL^2_{xy}}^2+\va \|\na\vec{V}^{\,\va}_t\|_{L^2_TL^2_{xy}}^2
\leq C.
\ee

Differentiating the third equation in \eqref{e23} with respect to $t$ and testing the resulting equation with $\vec{U}^{\,\va}_{tt}$ in $L^2_{xy}$, then using integration by parts to have
\be\label{b77}
\begin{split}
&\frac{1}{2}\frac{d}{dt}\|\na\vec{U}^{\,\va}_t\|_{L^2_{xy}}^2+\|\vec{U}^{\,\va}_{tt}\|_{L^2_{xy}}^2\\
=&-\int_0^\infty\int_{-\infty}^\infty (\va^{\frac{1}{2}}\vec{U}^{\,\va}_t\cdot\na \vec{U}^{\,\va}
+\vec{U}^{\,\va}\cdot\na \vec{u}^{\,0}_t+\vec{U}^{\,\va}_t\cdot\na \vec{u}^{\,0}+\vec{u}^{\,0}_t\cdot\na \vec{U}^{\,\va})\cdot\vec{U}^{\,\va}_{tt}dxdy\\
&-\int_0^\infty\int_{-\infty}^\infty (\va^{\frac{1}{2}}\vec{U}^{\,\va}\cdot\na \vec{U}^{\,\va}_t
+\vec{u}^{\,0}\cdot\na \vec{U}^{\,\va}_t-\va^{-\frac{1}{2}}\vec{h}^{\,\va}_t)\cdot\vec{U}^{\,\va}_{tt}dxdy\\
&-\lambda\int_0^\infty\int_{-\infty}^\infty M^\va_t U^{\va}_{2tt} dxdy
\\
:=&\sum_{i=1}^3 Q_i.
\end{split}
\ee
It follows from the Sobolev embedding inequality that
\ben
\begin{split}
Q_1\leq &(\va^{\frac{1}{2}}\|\vec{U}^{\,\va}_t\|_{L^4_{xy}} \|\na \vec{U}^{\,\va}\|_{L^4_{xy}}
+\|\vec{U}^{\,\va}\|_{L^2_{xy}}\|\na \vec{u}^{\,0}_t\|_{L^\infty_{xy}}
)\|\vec{U}^{\,\va}_{tt}\|_{L^2_{xy}}\\
 &+(
\|\vec{U}^{\,\va}_t\|_{L^2_{xy}}\|\na \vec{u}^{\,0}\|_{L^\infty_{xy}}
+\|\vec{u}^{\,0}_t\|_{L^\infty_{xy}}\|\na \vec{U}^{\,\va}\|_{L^2_{xy}})\|\vec{U}^{\,\va}_{tt}\|_{L^2_{xy}}\\
\leq &\frac{1}{4}\|\vec{U}^{\,\va}_{tt}\|_{L^2_{xy}}^2+C(\|\vec{U}^{\,\va}_t\|_{H^1_{xy}}^2 \|\vec{U}^{\,\va}\|_{H^2_{xy}}^2
+\|\vec{U}^{\,\va}\|_{H^1_{xy}}^2\|\vec{u}^{\,0}_t\|_{H^3_{xy}}^2
+\|\vec{U}^{\,\va}_t\|_{L^2_{xy}}^2\|\vec{u}^{\,0}\|_{H^3_{xy}}^2
)
\end{split}
\enn
and that
\ben
\begin{split}
Q_2+Q_3\leq &
(\va^{\frac{1}{2}}\|\vec{U}^{\,\va}\|_{L^\infty_{xy}}\|\na \vec{U}^{\,\va}_t\|_{L^2_{xy}}
+\|\vec{u}^{\,0}\|_{L^\infty_{xy}}\|\na \vec{U}^{\,\va}_t\|_{L^2_{xy}}+\va^{-\frac{1}{2}}\|\vec{h}^{\,\va}_t\|_{L^2_{xy}}
+\lambda \|M^\va_t\|_{L^2_{xy}})\|\vec{U}^{\,\va}_{tt}\|_{L^2_{xy}}\\
\leq &\frac{1}{4}\|\vec{U}^{\,\va}_{tt}\|_{L^2_{xy}}^2+C(\va\|\vec{U}^{\,\va}\|_{H^2_{xy}}^2+\|\vec{u}^{\,0}\|_{H^2_{xy}}^2)
\|\na\vec{U}^{\,\va}_t\|_{L^2_{xy}}
+\va^{-1}\|\vec{h}^{\,\va}_t\|_{L^2_{xy}}^2
+\lambda^2 \|M^\va_t\|_{L^2_{xy}}^2.
\end{split}
\enn
Inserting the above estimates for $Q_1$-$Q_3$ into \eqref{b77} and using Proposition \ref{p2}, Lemma \ref{l3}, Lemma \ref{l12} and \eqref{b79} , one derives
\ben
\|\na\vec{U}^{\,\va}_t\|_{L^\infty_TL^2_{xy}}+\|\vec{U}^{\,\va}_{tt}\|_{L^2_TL^2_{xy}}\leq C,
\enn
which, in conjunction with \eqref{b79} gives the desired estimates. The proof is finished.

\end{proof}

\begin{lemma}\label{l8}
Let the assumptions in Proposition \ref{p3} hold true. Assume further that
\be\label{a000}
\|M^\va\|_{L^\infty_TL^\infty_{xy}}+\|C^\va\|_{L^\infty_TL^\infty_{xy}}+\|\vec{U}^{\,\va}\|_{L^\infty_TH^2_{xy}}<1.
\ee
Then 
 there exists constants $C_7$ and $C$, independent of $\va$, depending on $T$, such that
\be\label{b89}
\|M^\va\|_{L^\infty_TL^\infty_{xy}}+\|C^\va\|_{L^\infty_TL^\infty_{xy}}+\|\vec{U}^{\,\va}\|_{L^\infty_TH^2_{xy}}
\leq C_7\va^{\frac{1}{8}}
\ee
and 
\be\label{b90}
\|\vec{U}^{\,\va}\|_{L^\infty_TL^\infty_{xy}}\leq C\va^{\frac{1}{2}},\qquad 
\|\na C^\va\|_{L^\infty_TL^\infty_{xy}}\leq C\va^{-\frac{3}{8}}, \qquad \|\na\vec{U}^{\,\va}\|_{L^\infty_TL^\infty_{xy}}\leq C\va^{\frac{1}{4}}.
\ee
\end{lemma}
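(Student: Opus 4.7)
The plan is to promote the $L^2$-type estimates of Lemmas \ref{l4}, \ref{l12} and \ref{l14} to the claimed $L^\infty$-type estimates by means of Sobolev embedding and Gagliardo--Nirenberg interpolation in the two-dimensional half-plane, together with elliptic/Stokes regularity. Under the standing smallness hypothesis \eqref{a000} every nonlinear coefficient in \eqref{e23} is uniformly controlled, so those three lemmas apply and deliver the quantitative $L^2_{xy}$-bounds I shall need. The $H^2_{xy}$-bound for $\vec{U}^{\,\va}$ in \eqref{b89} is already contained in the sharper Lemma \ref{l12} inequality $\|\vec{U}^{\,\va}\|_{L^\infty_TH^2_{xy}}\le C_4\va^{1/2}$, which is smaller than $C_4\va^{1/8}$ for $0<\va<1$; combined with the embedding $H^2(\R^2_+)\hookrightarrow L^\infty(\R^2_+)$ this also yields $\|\vec{U}^{\,\va}\|_{L^\infty_TL^\infty_{xy}}\le C\va^{1/2}$ at once, giving the first part of \eqref{b89}--\eqref{b90} with $C_7:=C_4$.

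For $\|\nabla\vec{U}^{\,\va}\|_{L^\infty_TL^\infty_{xy}}\le C\va^{1/4}$ I will upgrade the $H^2$-control of $\vec{U}^{\,\va}$ to a pointwise-in-time $H^3$-bound by reading the third equation of \eqref{e23} as a stationary Stokes problem for $(\vec{U}^{\,\va},\nabla P^\va)$ whose source is $\vec{U}^{\,\va}_t$ plus lower-order terms. Standard half-plane Stokes regularity with Dirichlet data, combined with the Lemma \ref{l14} bound $\|\vec{U}^{\,\va}_t\|_{L^\infty_TH^1_{xy}}\le C$ and the quantitative bounds on $M^\va$, $\vec{u}^{\,0}$ and $\vec{h}^{\,\va}$ already proved, yields $\|\vec{U}^{\,\va}\|_{L^\infty_TH^3_{xy}}\le C$. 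The two-dimensional Gagliardo--Nirenberg inequality $\|\nabla f\|_{L^\infty_{xy}}\le C\|\nabla f\|_{L^2_{xy}}^{1/2}\|\nabla^3 f\|_{L^2_{xy}}^{1/2}$ applied pointwise in $t$ to $\vec{U}^{\,\va}$ then produces $(\va^{1/2})^{1/2}\cdot C^{1/2}=C\va^{1/4}$, as desired.

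The $L^\infty$-bounds on $M^\va$, $C^\va$ and $\nabla C^\va$ will be obtained by first establishing the spatial $H^2_{xy}$-controls
\[
\|M^\va\|_{L^\infty_TH^2_{xy}}+\|C^\va\|_{L^\infty_TH^2_{xy}}\le C\va^{-1/4},\qquad \|\nabla C^\va\|_{L^\infty_TH^2_{xy}}\le C\va^{-5/4},
\]
and then interpolating with the $L^2$-bounds $C\va^{1/2}$ of Lemma \ref{l4} via $\|f\|_{L^\infty_{xy}}\le C\|f\|_{L^2_{xy}}^{1/2}\|f\|_{H^2_{xy}}^{1/2}$, which yields the arithmetic $(\va^{1/2})^{1/2}(\va^{-1/4})^{1/2}=\va^{1/8}$ and $(\va^{1/2})^{1/2}(\va^{-5/4})^{1/2}=\va^{-3/8}$. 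To reach the displayed $H^2$-bounds I view the first two equations of \eqref{e23} as elliptic problems pointwise in $t$. The $C^\va$-equation gives $\va\|\Delta C^\va\|_{L^2_{xy}}\le\|C^\va_t\|_{L^2_{xy}}+(\textup{bounded lower-order})$, so the missing ingredient is $\|C^\va_t\|_{L^\infty_TL^2_{xy}}\le C\va^{3/4}$. I will derive this by differentiating the $C^\va$-equation in $t$, testing with $C^\va_t$ in $L^2_{xy}$ and applying Gronwall. The crucial input is the initial identity $C^\va_t(\cdot,0)=\va^{-1/2}g^\va(\cdot,0)$---which follows from $C^\va(\cdot,0)=0$, $\vec{U}^{\,\va}(\cdot,0)=\mathbf{0}$ and $M^\va(\cdot,0)=0$---whose $L^2_{xy}$-norm is $O(\va^{1/2})$ by Lemma \ref{l2}. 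Dividing by $\va$ gives $\|\Delta C^\va\|_{L^\infty_TL^2_{xy}}\le C\va^{-1/4}$; inserting this into the elliptic estimate for the $M^\va$-equation together with $\|M^\va_t\|_{L^\infty_TL^2_{xy}}\le C$ from Lemma \ref{l14} produces the claimed $H^2$-bound for $M^\va$; and differentiating the $C^\va$-equation once more in $x$ and using $\|\nabla C^\va_t\|_{L^\infty_TL^2_{xy}}\le C$ from Lemma \ref{l14} yields $\|\nabla^3C^\va\|_{L^\infty_TL^2_{xy}}\le C\va^{-5/4}$.

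The principal technical obstacle is the coupling between the $H^2$-estimates of $M^\va$ and of $C^\va$. The chemotactic term $\nabla\cdot(M^a\nabla C^\va)$ in the $M^\va$-equation forces $\|\Delta M^\va\|_{L^2_{xy}}$ to inherit the singular factor $\va^{-1}$ carried by $\va\Delta C^\va=C^\va_t+\cdots$; and Lemma \ref{l14} provides only $\|\nabla C^\va_t\|_{L^\infty_TL^2_{xy}}\le C$, not an $L^\infty_TL^2_{xy}$-bound for $C^\va_t$ itself (no Poincar\'e inequality is available on the half-plane $\R^2_+$). Obtaining the sharp exponent $\va^{1/8}$ therefore hinges on the separate energy estimate for $C^\va_t$ described above: without the $O(\va^{1/2})$-smallness of the initial value $C^\va_t(\cdot,0)=\va^{-1/2}g^\va(\cdot,0)$ one would be left with only $O(1)$ control of $\|C^\va_t\|_{L^\infty_TL^2_{xy}}$ and the interpolation would degrade the $L^\infty$-bound on $M^\va$ to $O(1)$, which would not improve the standing smallness hypothesis \eqref{a000}.
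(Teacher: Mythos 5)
Your treatment of $\vec{U}^{\,\va}$ (Sobolev embedding of the $H^2$ bound, elevation to $H^3$ via the momentum equation, then Gagliardo--Nirenberg for $\|\na\vec{U}^{\,\va}\|_{L^\infty}$) is essentially the paper's argument and is fine. The gap is in your route to the $H^2_{xy}$ bounds for $C^\va$ and, through the chemotactic coupling, for $M^\va$. You propose to read the second equation of \eqref{e23} as an elliptic problem and divide by $\va$, claiming $\|\Delta C^\va\|_{L^\infty_TL^2_{xy}}\le C\va^{-1/4}$ provided $\|C^\va_t\|_{L^\infty_TL^2_{xy}}\le C\va^{3/4}$. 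This fails on two counts. First, even granting that bound on $C^\va_t$, the right-hand side of $\va\Delta C^\va=C^\va_t+\vec{u}^{\,0}\cdot\na C^\va+\vec{U}^{\,\va}\cdot\na C^a+M^\va C^a+M^aC^\va-\va^{-1/2}g^\va+\cdots$ contains terms such as $\vec{u}^{\,0}\cdot\na C^\va$, whose $L^2_{xy}$ norm is only $O(\va^{1/2})$ by Lemma \ref{l4}; hence the elliptic route gives at best $\|\Delta C^\va\|\le C\va^{-1/2}$, and the interpolation $(\va^{-1/2})^{1/2}(\va^{1/2})^{1/2}=1$ yields only $\|C^\va\|_{L^\infty}\le C$, which does not improve \eqref{a000} and so does not close the bootstrap; the same degradation propagates to $\|\Delta M^\va\|$ through $\na\cdot(M^a\na C^\va)$ and to $\|\na^3C^\va\|$. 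Second, the auxiliary estimate $\|C^\va_t\|_{L^\infty_TL^2_{xy}}\le C\va^{3/4}$ is itself out of reach: differentiating the $C^\va$-equation in $t$ produces the coupling term $M^\va_tC^a$, and Lemma \ref{l14} only provides $\|M^\va_t\|_{L^\infty_TL^2_{xy}}\le C$, so Gronwall returns an $O(1)$ bound regardless of the smallness of the initial value $C^\va_t(\cdot,0)=\va^{-1/2}g^\va(\cdot,0)$.

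The missing idea is the paper's temporal interpolation for $\vec{V}^{\,\va}=\na C^\va$: writing
$\|\vec{V}^{\,\va}(t)\|_{H^1_{xy}}^2=2\int_0^t(\vec{V}^{\,\va},\vec{V}^{\,\va}_s)_{H^1_{xy}}\,ds
\le 2\|\vec{V}^{\,\va}\|_{L^2_TH^1_{xy}}\|\vec{V}^{\,\va}_t\|_{L^2_TH^1_{xy}}$,
and combining $\|\vec{V}^{\,\va}\|_{L^2_TH^1_{xy}}\le C$ (Lemma \ref{l4}, since $\va^{1/2}\|\vec{V}^{\,\va}\|_{L^2_TH^1_{xy}}\le C\va^{1/2}$) with $\|\vec{V}^{\,\va}_t\|_{L^2_TH^1_{xy}}\le C\va^{-1/2}$ (Lemma \ref{l14}), gives $\|\vec{V}^{\,\va}\|_{L^\infty_TH^1_{xy}}\le C\va^{-1/4}$, i.e. $\|\na^2C^\va\|_{L^\infty_TL^2_{xy}}\le C\va^{-1/4}$ directly, without ever dividing the equation by $\va$. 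This is what makes $\|C^\va\|_{L^\infty}\le C\va^{1/8}$, the elliptic estimate $\|M^\va\|_{L^\infty_TH^2_{xy}}\le C\va^{-1/4}$ (hence $\|M^\va\|_{L^\infty}\le C\va^{1/8}$), and, via the $\vec{V}^{\,\va}$-equation \eqref{a10}, the bound $\|\vec{V}^{\,\va}\|_{L^\infty_TH^2_{xy}}\le C\va^{-5/4}$ and $\|\na C^\va\|_{L^\infty}\le C\va^{-3/8}$ all attainable. You should replace your elliptic argument for $C^\va$ with this duality-in-time interpolation (and the analogous one for $\|M^\va\|_{L^\infty_TH^1_{xy}}$).
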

\begin{proof} By Lemma \ref{l4}, Lemma \ref{l14} and the fact
\ben
\frac{d}{dt}\|\vec{V}^{\,\va}(t)\|^2_{H^1_{xy}}=2(\vec{V}^{\,\va}(t),\vec{V}_t^{\,\va}(t))_{H^1_{xy}},
\enn
where $(\vec{V}^{\,\va}(t),\vec{V}_t^{\,\va}(t))_{H^1_{xy}}$ denotes the ${H^1_{xy}}$ inner product of $\vec{V}^{\,\va}(t)$ and $\vec{V}_t^{\,\va}(t)$, one deduces that
\ben
\begin{split}
\|\vec{V}^{\,\va}(t)\|^2_{H^1_{xy}}
=2\int_0^t (\vec{V}^{\,\va}(s),\vec{V}_s^{\,\va}(s))_{H^1_{xy}}ds
\leq
2\|\vec{V}^{\,\va}\|_{L^2_TH^1_{xy}}\|\vec{V}_t^{\,\va}\|_{L^2_TH^1_{xy}}
\leq C\va^{-\frac{1}{2}}
\end{split}
\enn
for each $t\in (0,T]$, that is
\be\label{b35}
\|\vec{V}^{\,\va}\|_{L^\infty_TH^1_{xy}}\leq C\va^{-\frac{1}{4}}.
\ee
A similar argument used in deriving \eqref{b35} along with Lemma \ref{l4} and Lemma \ref{l14} leads to
\be\label{b36}
\|M^{\va}\|^2_{L^\infty_TH^1_{xy}}\leq C\|M^{\va}\|_{L^2_TH^1_{xy}}\|M^{\va}_t\|_{L^2_TH^1_{xy}}
\leq C\va^{\frac{1}{2}}.
\ee
By an analogous argument used in attaining \eqref{a65}, one gets
\be\label{b73}
\begin{split}
\|C^a\|_{L^\infty_TH^2_{xy}}
\leq C(\|c^0\|_{L^\infty_TH^2_{xy}}+\va^{-\frac{1}{4}}\|c^{B,1}\|_{L^\infty_TH^2_{xz}}+\va^{\frac{1}{4}}\|c^{B,2}\|_{L^\infty_TH^2_{xz}})
\leq C\va^{-\frac{1}{4}},
\end{split}
\ee
which, along with \eqref{a10}, \eqref{a35}-\eqref{b72}, \eqref{b35}, \eqref{b36}, Proposition \ref{p2}, Lemma \ref{l2}, Lemma \ref{l4}, Lemma \ref{l14} and the assumption \eqref{a000}, gives that
\ben
\begin{split}
\va \|\vec{V}^{\,\va}\|_{L^\infty_TH^2_{xy}}
\leq &
\va^{\frac{1}{2}}
 \|\vec{U}^{\,\va}\|_{L^\infty_TH^2_{xy}}\|\vec{V}^{\,\va}\|_{L^\infty_TH^1_{xy}}
 +C\|C^{a}\|_{L^\infty_TH^2_{xy}} \|\vec{U}^{\,\va}\|_{L^\infty_TH^2_{xy}} 
+\|\vec{u}^{\,0}\|_{L^\infty_TH^2_{xy}}\|\vec{V}^{\,\va}\|_{L^\infty_TH^1_{xy}}\\
&+\va^{\frac{1}{2}}\|\na M^\va\|_{L^\infty_TL^2_{xy}} \|C^\va\|_{L^\infty_TL^\infty_{xy}} +\va^{\frac{1}{2}}\|M^\va\|_{L^\infty_TL^\infty_{xy}} \|\vec{V}^{\,\va}\|_{L^\infty_TL^2_{xy}}
+\|M^\va\|_{L^\infty_TL^2_{xy}} \|\na C^a\|_{L^\infty_TL^\infty_{xy}}\\
&+\|\na M^\va\|_{L^\infty_TL^2_{xy}} \|C^a\|_{L^\infty_TL^\infty_{xy}} +C\|\na M^{a}\|_{L^\infty_TL^4_{xy}} \|C^\va\|_{L^\infty_TH^1_{xy}} \\ &+\|M^a\|_{L^\infty_TL^\infty_{xy}}\|\vec{V}^{\,\va}\|_{L^\infty_TL^2_{xy}}
+\|\vec{V}^{\,\va}_t\|_{L^\infty_TL^2_{xy}}
+\va^{-\frac{1}{2}}\|\na g^\va\|_{L^\infty_TL^2_{xy}}\\
\leq &C\va^{-\frac{1}{4}}.
\end{split}
\enn
Thus 
\ben
\|\vec{V}^{\,\va}\|_{L^\infty_TH^2_{xy}}\leq C\va^{-\frac{5}{4}},
\enn
which, along with the Gagliardo-Nirenberg interpolation inequality and Lemma \ref{l4} yields
\be\label{b81}
\|\vec{V}^{\,\va}\|_{L^\infty_TL^\infty_{xy}}\leq
 \|\vec{V}^{\,\va}\|_{L^\infty_TH^2_{xy}}^{\frac{1}{2}}\|\vec{V}^{\,\va}\|_{L^\infty_TL^2_{xy}}^{\frac{1}{2}}
 +\|\vec{V}^{\,\va}\|_{L^\infty_TL^2_{xy}}
 \leq C \va^{-\frac{3}{8}}.
\ee

From the first equation of \eqref{e23}, Sobolev embedding inequality and the fact $\vec{V}^{\,\va}=\na C^\va$, one gets
\be\label{a68}
\begin{split}
\|M^\va\|_{L^\infty_TH^2_{xy}}
\leq&
\va^{\frac{1}{2}}\|\na M^\va \cdot\vec{V}^{\,\va}\|_{L^\infty_TL^2_{xy}}
+\va^{\frac{1}{2}}\| M^\va\|_{L^\infty_TL^\infty_{xy}}\|\na\vec{V}^{\,\va}\|_{L^\infty_TL^2_{xy}}\\
&+\|M^\va\|_{L^\infty_TL^\infty_{xy}} \|\nabla^2 C^a\|_{L^\infty_TL^2_{xy}}
+\|\na M^\va\|_{L^\infty_TL^2_{xy}} \|\nabla C^a\|_{L^\infty_TL^\infty_{xy}}\\
&+\|\na M^a\|_{L^\infty_TL^4_{xy}} \|\vec{V}^{\,\va}\|_{L^\infty_TL^4_{xy}}
+\| M^a\|_{L^\infty_TL^\infty_{xy}} \|\na\vec{V}^{\,\va}\|_{L^\infty_TL^2_{xy}}\\
&
+\|M^\va_t\|_{L^\infty_TL^2_{xy}}+C\va^{\frac{1}{2}}\|\vec{U}^{\,\va}\|_{L^\infty_TH^2_{xy}}\|\na M^\va\|_{L^\infty_TL^2_{xy}}+\va^{-\frac{1}{2}}\|f^\va\|_{L^\infty_TL^2_{xy}}\\
&+\|\vec{U}^{\,\va}\|_{L^\infty_TL^4_{xy}}\|\nabla M^a\|_{L^\infty_TL^4_{xy}}
+\|\vec{u}^{\,0}\|_{L^\infty_TL^\infty_{xy}}\|\nabla M^\va\|_{L^\infty_TL^2_{xy}}.
\end{split}
\ee
The Gagliardo-Nirenberg interpolation inequality leads to
\ben
\begin{split}
\va^{\frac{1}{2}}\|\na M^\va \cdot \vec{V}^{\,\va}\|_{L^\infty_TL^2_{xy}}
\leq& 
\va^{\frac{1}{2}}\|\na M^\va\|_{L^\infty_TL^4_{xy}}\|\vec{V}^{\,\va}\|_{L^\infty_TL^4_{xy}}
\\
\leq &C\va^{\frac{1}{2}}(\|M^\va\|_{L^\infty_TH^2_{xy}}^{\frac{1}{2}}\|\na M^\va\|_{L^\infty_TL^2_{xy}}^{\frac{1}{2}}
+\|\na M^\va\|_{L^\infty_TL^2_{xy}})
\|\vec{V}^{\,\va}\|_{L^\infty_TH^1_{xy}}\\
\leq & \frac{1}{2}\|M^\va\|_{L^\infty_TH^2_{xy}}+C\va\|\na M^\va\|_{L^\infty_TL^2_{xy}}\|\vec{V}^{\,\va}\|_{L^\infty_TH^1_{xy}}^2+C\va^{\frac{1}{2}}\|\na M^\va\|_{L^\infty_TL^2_{xy}}\|\vec{V}^{\,\va}\|_{L^\infty_TH^1_{xy}}.
\end{split}
\enn
Substituting the above estimate into \eqref{a68} and using the assumption $\| M^\va\|_{L^\infty_TL^\infty_{xy}}<1$, \eqref{b35}-\eqref{b73}, \eqref{a35}-\eqref{b72}, Proposition \ref{p2}, Lemma \ref{l1} and Lemma \ref{l14}, we have
\be\label{a69}
\|M^\va\|_{L^\infty_TH^2_{xy}}\leq C\va^{-\frac{1}{4}}.
\ee
It follows from the Galiardo-Nirenberg interpolation inequality, \eqref{a69} and Lemma \ref{l4} that
\be\label{b82}
\|M^\va\|_{L^\infty_TL^\infty_{xy}}
\leq C(\|M^\va\|_{L^\infty_TH^2_{xy}}^{\frac{1}{2}}\|M^\va\|_{L^\infty_TL^2_{xy}}^{\frac{1}{2}}+\|M^\va\|_{L^\infty_TL^2_{xy}})
\leq C_5\va^{\frac{1}{8}},
\ee
where the constant $C_5$ is independent of $\va$, depending on $T$.
The Gagliardo-Nirenberg interpolation inequality, the fact $\vec{V}^{\,\va}=\na C^\va$, \eqref{b35} and Lemma \ref{l4}, yield
\be\label{b83}
\begin{split}
\|C^\va\|_{L^\infty_TL^\infty_{xy}}
\leq C(\|\na^2 C^\va\|_{L^\infty_TL^2_{xy}}^{\frac{1}{2}}\|C^\va\|_{L^\infty_TL^2_{xy}}^{\frac{1}{2}}+\|C^\va\|_{L^\infty_TL^2_{xy}})
\leq C_6\va^{\frac{1}{8}}
\end{split}
\ee
with the constant $C_6$ independent of $\va$ and depending on $T$. Denoting $C_7=C_4+C_5+C_6$, one deduces \eqref{b89} from Lemma \ref{l12}, \eqref{b82}, \eqref{b83} and the assumption $0<\va<1$.

Lemma \ref{l4}, Lemma \ref{l12} and the Gagliardo-Nirenberg interpolation inequality further lead to
\be\label{b84}
\begin{split}
\|\vec{U}^{\,\va}\|_{L^\infty_TL^\infty_{xy}}
\leq C(\|\vec{U}^{\,\va}\|_{L^\infty_TH^2_{xy}}^{\frac{1}{2}}\|\vec{U}^{\,\va}\|_{L^\infty_TL^2_{xy}}^{\frac{1}{2}}
+\|\vec{U}^{\,\va}\|_{L^\infty_TL^2_{xy}})
\leq C\va^{\frac{1}{2}}.
\end{split}
\ee
Similarly to the derivation of \eqref{a40}, one deduces from the third equation of \eqref{e23}, \eqref{b36}, Proposition \ref{p2}, Lemma \ref{l3}, Lemma \ref{l4} and Lemma \ref{l12} that
\ben
\begin{split}
&\|\na P^\va\|_{L^\infty_TH^1_{xy}}+\|\vec{U}^{\,\va}\|_{L^\infty_TH^3_{xy}}\\
\leq & \|\vec{U}^{\,\va}_t\|_{L^\infty_TH^1_{xy}}
+C\va^{\frac{1}{2}}\|\vec{U}^{\,\va}\|_{L^\infty_TH^2_{xy}}^2
+C\|\vec{u}^{\,0}\|_{L^\infty_TH^3_{xy}}\|\vec{U}^{\,\va}\|_{L^\infty_TH^2_{xy}}
+\lambda\|M^{\va}\|_{L^\infty_TH^1_{xy}}+\va^{-\frac{1}{2}}\|\vec{h}^{\,\va}\|_{L^\infty_TH^1_{xy}}\\
\leq &C,
\end{split}
\enn
which, along with Lemma \ref{l12} and the Gagliardo-Nirenberg inequality leads
\be\label{b85}
\begin{split}
\|\na\vec{U}^{\,\va}\|_{L^\infty_TL^\infty_{xy}}
\leq C(\|\na\vec{U}^{\,\va}\|_{L^\infty_TH^2_{xy}}^{\frac{1}{2}}\|\na\vec{U}^{\,\va}\|_{L^\infty_TL^2_{xy}}^{\frac{1}{2}}
+\|\na\vec{U}^{\,\va}\|_{L^\infty_TL^2_{xy}})
\leq C\va^{\frac{1}{4}}.
\end{split}
\ee
Collecting \eqref{b81}, \eqref{b84} and \eqref{b85} and using the assumption $0<\va<1$, we obtain \eqref{b90}. The proof is completed.

\end{proof}
  Based on the results derived in Lemma \ref{l8}, we next prove Proposition \ref{p3}.
\newline
\emph{Proof of Proposition \ref{p3}.}
Let 
\be\label{b48}
\va_T=\min\{(2C_7)^{-8},1\}.
\ee
 Then it follows from \eqref{b89} that \ben
 \|M^\va\|_{L^\infty_TL^\infty_{xy}}+\|C^\va\|_{L^\infty_TL^\infty_{xy}}+\|\vec{U}^{\,\va}\|_{L^\infty_TH^2_{xy}}<\frac{1}{2},
  \enn
 for each $\va\in (0,\va_T]$, which, along with Lemma \ref{l8} and the \emph{bootstrap principle} (see \cite[page 21, Proposition 1.21]{tao2006nonlinear}) gives \eqref{b86}. \eqref{b87} follows from \eqref{b90}.
 
\endProof
\subsection{Proof of Theorem \ref{t1}.} 
First, it follows from \eqref{e22}, Proposition \ref{p3}, the change of variables $z=\frac{y}{\sqrt{\va}}$, the Sobolev embedding inequality and Lemma \ref{l10} - Lemma \ref{l11} that
\be\label{b93}
\begin{split}
\|m^\va-m^0\|_{L^\infty_TL^\infty_{xy}}\leq& \va^{\frac{1}{2}}\|M^\va\|_{L^\infty_TL^\infty_{xy}}
+\va^{\frac{1}{2}}\|m^{B,1}\|_{L^\infty_TL^\infty_{xy}}+\va\|m^{B,2}\|_{L^\infty_TL^\infty_{xy}}
+\va^{\frac{3}{2}}\|\xi\|_{L^\infty_TL^\infty_{xy}}\\
\leq& \va^{\frac{1}{2}}\|M^\va\|_{L^\infty_TL^\infty_{xy}}
+\va^{\frac{1}{2}}\|m^{B,1}\|_{L^\infty_TL^\infty_{xz}}+\va\|m^{B,2}\|_{L^\infty_TL^\infty_{xz}}
+\va^{\frac{3}{2}}\|\xi\|_{L^\infty_TL^\infty_{xz}}\\
\leq& \va^{\frac{1}{2}}\|M^\va\|_{L^\infty_TL^\infty_{xy}}
+C(\va^{\frac{1}{2}}\|m^{B,1}\|_{L^\infty_TH^2_{xz}}+\va\|m^{B,2}\|_{L^\infty_TH^2_{xz}}
+\va^{\frac{3}{2}}\|\xi\|_{L^\infty_TH^2_{xz}})\\
\leq &C\va^{\frac{1}{2}}.
\end{split}
\ee
By a similar argument used in deriving \eqref{b93}, one deduces from \eqref{e22}, Proposition \ref{p3} and Lemma \ref{l10} - Lemma \ref{l11} that
\be\label{b94}
\begin{split}
\|c^\va-c^0\|_{L^\infty_TL^\infty_{xy}}
\leq& \va^{\frac{1}{2}}\|C^\va\|_{L^\infty_TL^\infty_{xy}}
+C(\va^{\frac{1}{2}}\|c^{B,1}\|_{L^\infty_TH^2_{xz}}+\va\|c^{B,2}\|_{L^\infty_TH^2_{xz}}
)
\leq C\va^{\frac{1}{2}},\\
\|\pt_xc^\va-\pt_xc^0\|_{L^\infty_TL^\infty_{xy}}
\leq& \va^{\frac{1}{2}}\|\na C^\va\|_{L^\infty_TL^\infty_{xy}}
+C(\va^{\frac{1}{2}}\|\pt_xc^{B,1}\|_{L^\infty_TH^2_{xz}}+\va\|\pt_xc^{B,2}\|_{L^\infty_TH^2_{xz}}
)
\leq C\va^{\frac{1}{8}}
\end{split}
\ee
and 
\be\label{b95}
\begin{split}
\|\vec{u}^{\,\va}-\vec{u}^{\,0}\|_{L^\infty_TL^\infty_{xy}}
\leq& \va^{\frac{1}{2}}\|\vec{U}^{\,\va}\|_{L^\infty_TL^\infty_{xy}}\leq C\va,\\
\|\na\vec{u}^{\,\va}-\na\vec{u}^{\,0}\|_{L^\infty_TL^\infty_{xy}}
\leq& \va^{\frac{1}{2}}\|\na\vec{U}^{\,\va}\|_{L^\infty_TL^\infty_{xy}}\leq C\va^{\frac{3}{4}}.
\end{split}
\ee
\eqref{e22} along with Proposition \ref{p3}, Lemma \ref{l11} and the change of variables $z=\frac{y}{\sqrt{\va}}$ further gives
\be\label{b96}
\begin{split}
&\|\pt_yc^\va(x,y,t)-[\pt_yc^0(x,y,t)+\pt_zc^{B,1}(x,\frac{y}{\sqrt{\va}},t)]\|_{L^\infty_TL^\infty_{xy}}\\
\leq& \va^{\frac{1}{2}}\|\na C^{\va}(x,y,t)\|_{L^\infty_TL^\infty_{xy}}
+\va\|\pt_y c^{B,2}(x,\frac{y}{\sqrt{\va}},t)\|_{L^\infty_TL^\infty_{xy}}\\
\leq& \va^{\frac{1}{2}}\|\na C^{\va}(x,y,t)\|_{L^\infty_TL^\infty_{xy}}
+\va^{\frac{1}{2}}\|\pt_z c^{B,2}(x,z,t)\|_{L^\infty_TL^\infty_{xz}}\\
\leq& \va^{\frac{1}{2}}\|\na C^{\va}(x,y,t)\|_{L^\infty_TL^\infty_{xy}}
+C\va^{\frac{1}{2}}\|\pt_z c^{B,2}(x,z,t)\|_{L^\infty_TH^2_{xz}}\\
\leq &C\va^{\frac{1}{8}}.
\end{split}
\ee
Collecting \eqref{b93}-\eqref{b96}, we obtain the desired estimate and complete the proof.

\endProof

\section{Appendix}
\noindent \textbf{Step 1. Initial and boundary conditions.}
  Inserting \eqref{b2} into the initial conditions in \eqref{e1}, one gets
\be\label{b3}
\begin{split}
m^{I,0}(x,y,0)&=m_{0}(x,y),\quad m^{B,0}(x,z,0)=0,\\
c^{I,0}(x,y,0)&=c_{0}(x,y),\quad c^{B,0}(x,z,0)=0,\\
\vec{u}^{\, I,0}(x,y,0)&=\vec{u}_{0}(x,y),\quad \vec{u}^{\,B,0}(x,z,0)=\mathbf{0}
\end{split}
\ee
and for $j\geq 1$
\be\label{b4}
\begin{split}
m^{I,j}(x,y,0)&=m^{B,j}(x,z,0)=0,\\
c^{I,j}(x,y,0)&=c^{B,j}(x,z,0)=0,\\
\vec{u}^{\,I,j}(x,y,0)&=\vec{u}^{\,B,j}(x,z,0)=\mathbf{0}.
\end{split}
\ee
For boundary conditions, one substitutes \eqref{b2} into \eqref{e2} and gets for $j\in\mathbb{N}$ that
\ben
\begin{split}
 0=&\sum_{j=0}^\infty \va^{\frac{j}{2}}\partial_y m^{I,j}(x,0,t)
 +\sum_{j=-1}^\infty \va^{\frac{j}{2}}\partial_z m^{B,j+1}(x,0,t)
 -\sum_{j=0}^\infty \va^{\frac{j}{2}}\sum_{l=0}^j m^{I,l}(x,0,t)\partial_y c^{I,j-l}(x,0,t)\\
 &-\sum_{j=-1}^\infty \va^{\frac{j}{2}}\sum_{l=0}^{j+1} m^{I,l}(x,0,t)\partial_z c^{B,j+1-l}(x,0,t)
 -\sum_{j=0}^\infty \va^{\frac{j}{2}}\sum_{l=0}^{j} m^{B,l}(x,0,t)\partial_y c^{I,j-l}(x,0,t)
 \\
 &-\sum_{j=-1}^\infty \va^{\frac{j}{2}}\sum_{l=0}^{j+1} m^{B,l}(x,0,t)\partial_z c^{B,j+1-l}(x,0,t),\\
&\qquad\qquad \qquad 
\sum_{j=0}^\infty \va^{\frac{j}{2}}\partial_y c^{I,j}(x,0,t)
+\sum_{j=-1}^\infty \va^{\frac{j}{2}}\partial_z c^{B,j+1}(x,0,t)=0
\end{split}
\enn
and
\ben
\begin{split}
\sum_{j=0}^\infty \va^{\frac{j}{2}}
[\vec{u}^{\,I,j}(x,0,t)+\vec{u}^{\,B,j}(x,0,t)]=\mathbf{0}.
\end{split}
\enn
To fulfill the above boundary conditions for all small $\va>0$, it is required that
\be\label{b6}
\begin{split}
&\ \ \ \ \ \ \ \ 0=\partial_z m^{B,0}(x,0,t)-m^{I,0}(x,0,t)
\partial_z c^{B,0}(x,0,t)
-m^{B,0}(x,0,t)
\partial_z c^{B,0}(x,0,t),\\
&\partial_z c^{B,0}(x,0,t)=0,\quad\partial_z c^{B,1}(x,0,t)
=-\partial_y c^{I,0}(x,0,t),\quad\partial_z c^{B,2}(x,0,t)
=-\partial_y c^{I,1}(x,0,t)
\end{split}
\ee
and 
\be\label{b5}
\begin{split}
0=&\partial_y m^{I,k}(x,0,t)+\partial_z m^{B,k+1}(x,0,t)
-\sum_{l=0}^{k}m^{I,l}(x,0,t)\partial_y c^{I,k-l}(x,0,t)\\
&-\sum_{l=0}^{k+1}m^{I,l}(x,0,t)\partial_z c^{B,k+1-l}(x,0,t)
-\sum_{l=0}^{k}m^{B,l}(x,0,t)\partial_y c^{I,k-l}(x,0,t)\\
&-\sum_{l=0}^{k+1}m^{B,l}(x,0,t)\partial_z c^{B,k+1-l}(x,0,t)
\end{split}
\ee
and
\be\label{b7}
\begin{split}
\vec{u}^{\,I,k}(x,0,t)+\vec{u}^{\,B,k}(x,0,t)=\mathbf{0}
\end{split}
\ee
for $k\geq 0$.
\\
\textbf{Step 2. Equations for $\vec{u}^{\,I,j}$ and $\vec{u}^{\,B,j}$.} For equations of outer layer profiles $\vec{u}^{\,I,j}$, we omit the boundary layer profiles $(m^{B,j},c^{B,j},\vec{u}^{\,B,j},p^{B,j})$ and substitute \eqref{b2} into the third and fourth equations in \eqref{e1} to deduce that
\be\label{b8}
\begin{split}
\vec{u}^{\,I,j}_t+\sum_{l=0}^{j}\vec{u}^{\,I,l}&\cdot \nabla \vec{u}^{\,I,j-l}+\nabla p^{I,j}+m^{I,j}(0,\lambda)=\Delta \vec{u}^{\,I,j}
\end{split}
\ee
and
\be\label{b10}
\begin{split}
\nabla\cdot \vec{u}^{\,I,j}=0
\end{split}
\ee
for $j\geq 0$.
 To find the equations for boundary layer profiles $\vec{u}^{\,B,j}$, by a similar argument in \cite[Step 2, Appendix]{HLWW}, namely inserting \eqref{b2} into the third equation of \eqref{e1} and subtracting \eqref{b8} from the resulting equations then expanding $\vec{u}^{\,I,j}(x,y,t)=\vec{u}^{\,I,j}(x,\va^{1/2}z,t)$
 in $\va$ by the Taylor expansion , we end up with
 \be\label{b9}
\sum_{j= -2}^\infty\va^{j/2}\vec{G}^{\,j}(x,z,t)=\mathbf{0},
\ee
where
\ben
\left\{\aligned
\vec{G}^{\,-2}=&-\partial^2_z\vec{u}^{\,B,0},\\
\vec{G}^{\,-1}=&[\overline{u^{\,I,0}_2}+u^{\,B,0}_2]\partial_z\vec{u}^{\,B,0}
+(0,\partial_z p^{B,0})-\partial_z^2 \vec{u}^{\,B,1},\\
\vec{G}^{\,0}=&\vec{u}^{\,B,0}_t
+\vec{u}^{\,B,0}\cdot \overline{\nabla\vec{u}^{\,I,0}}
+[\ol{u^{\,I,0}_1}+u^{\,B,0}_1]\partial_x \vec{u}^{\,B,0}
+[\ol{u^{\,I,0}_2}+u^{\,B,0}_2]\partial_z \vec{u}^{\,B,1}
+[\ol{u^{\,I,1}_2}+u^{\,B,1}_2]\partial_z \vec{u}^{\,B,0}\\
&+(0,\lambda m^{B,0})
+(\partial_x p^{B,0},\partial_z p^{B,1})
+z\ol{\partial_y u^{I,0}_2}\partial_z \vec{u}^{\,B,0}
-\partial_x^2 \vec{u}^{\,B,0}
-\partial_z^2 \vec{u}^{\,B,2},
\\
\vec{G}^{\,1}=&\vec{u}^{\,B,1}_t
+\vec{u}^{\,B,0}\cdot \ol{\nabla\vec{u}^{\,I,1}}
+\vec{u}^{\,B,1}\cdot\ol{\nabla \vec{u}^{\,I,0}}
+[\ol{u^{I,0}_1}+u^{B,0}_1]\partial_x \vec{u}^{\,B,1}
+[\overline{u^{I,1}_1}+u^{B,1}_1]\partial_x \vec{u}^{\,B,0}\\
&+[\overline{u^{I,0}_2}+u^{B,0}_2]\partial_z \vec{u}^{\,B,2}
+[\overline{u^{I,1}_2}+u^{B,1}_2]\partial_z \vec{u}^{\,B,1}
+[\overline{u^{I,2}_2}+u^{B,2}_2]\partial_z \vec{u}^{\,B,0}
+(\partial_x p^{B,1},\partial_z p^{B,2})\\
&+(0,\lambda m^{B,1})
+z[\vec{u}^{\,B,0}\cdot \overline{\nabla\partial_y\vec{u}^{\,I,0}}
+\ol{\partial_yu^{\,I,0}_1}\partial_x \vec{u}^{\,B,0}
+\ol{\partial_yu^{\,I,0}_2}\partial_z \vec{u}^{\,B,1}
+\ol{\partial_yu^{\,I,1}_2}\partial_z \vec{u}^{\,B,0}]\\
&+z^2\overline{\partial^2_y u^{I,0}_2}\partial_z \vec{u}^{\,B,0}
-\partial_x^2\vec{u}^{\,B,1}
-\partial_z^2\vec{u}^{\,B,3},
\\
\cdots&\,\cdots
\endaligned\right.
\enn
 with $\vec{G}^{j}=\mathbf{0}$ for $j\geq -2$. Moreover, substituting \eqref{b2} into the fourth equation in \eqref{e1} and using \eqref{b10}, one gets
 \be\label{b12}
 \partial_z u^{B,0}_2=0
 \ee
 and
 \be\label{b11}
 \partial_x u^{B,j}_1+\partial_z u^{B,j+1}_2=0
 \ee
with $j\geq 0$.
Using assumption (H), we deduce from \eqref{b7}, \eqref{b9}-\eqref{b11} that
\be\label{b13}
\vec{u}^{\,B,0}=\vec{u}^{\,B,1}
=\vec{u}^{\,B,2}=\vec{u}^{\,B,3}=\mathbf{0}
\ee
and that
\be\label{b14}
p^{B,0}=p^{B,1}=0,\qquad p^{B,2}(x,z,t)=\lambda \int_z^\infty m^{B,1}(x,s,t)ds.
\ee
\textbf{Step 3. Equations for $m^{I,j}$ and $m^{B,j}$.} Similarly to Step 2, we omit the boundary layer profiles $(m^{B,j},c^{B,j},\vec{u}^{\,B,j},p^{B,j})$ and insert \eqref{b2} into the first equation of \eqref{e1} to get
\be\label{b15}
m^{I,j}_t+\sum_{l=0}^{j}\vec{u}^{\,I,l}\cdot\nabla m^{I,j-l}+\sum_{l=0}^{j}\nabla\cdot(m^{I,l}\nabla c^{I,j-l})=\Delta m^{I,j},
\ee
with $j\geq 0$. Plugging \eqref{b2} into the first equation of \eqref{e1} and subtracting \eqref{b15} from the resulting equality then applying the Taylor expansion to $(m^{I,j},c^{I,j},\vec{u}^{\,I,j})(x,\va^{1/2}z,t)$ in $\va$ and using \eqref{b13}, one gets
\be\label{b16}
\sum_{j= -2}^\infty\va^{j/2}F^{\,j}(x,z,t)=0,
\ee
where
\ben
\left\{\aligned
F^{-2}=&\ol{m^{I,0}}\partial_z^2 c^{B,0}+\partial_z (m^{B,0}\partial_z c^{B,0})
-\partial_z^2 m^{B,0},\\
F^{-1}=&\ol{u^{I,0}_2}\partial_z m^{B,0}+\ol{\partial_y m^{I,0}}\partial_z c^{B,0}+\partial_z m^{B,0}\ol{\partial_y c^{I,0}}+\ol{m^{I,0}}\partial^2_z c^{B,1}+\ol{m^{I,1}}\partial^2_z c^{B,0}
+\partial_z(m^{B,0}\partial_z c^{B,1})\\
&+\partial_z(m^{B,1}\partial_z c^{B,0})
+z\ol{\partial_y m^{I,0}}\partial^2_z c^{B,0}-\partial^2_z m^{B,1},\\
F^{0}=&m^{B,0}_t
+\ol{u^{\,I,0}_1}\partial_x m^{B,0}
+\ol{u^{I,0}_2}\partial_z m^{B,1}
+\ol{u^{I,1}_2}\partial_z m^{B,0}
+\ol{\partial_y m^{I,0}}\partial_z c^{B,1}
+\ol{\partial_y m^{I,1}}\partial_z c^{B,0}\\
&+\partial_zm^{B,0}\ol{\partial_y c^{I,1}}
+\partial_zm^{B,1}\ol{\partial_y c^{I,0}}+\partial_x[\ol{m^{I,0}}\partial_x c^{B,0}
+m^{B,0}\ol{\partial_x c^{I,0}}+m^{B,0}\partial_x c^{B,0}]
\\
&+\partial_z[m^{B,0}\partial_z c^{B,2}+m^{B,1}\partial_z c^{B,1}
+m^{B,2}\partial_z c^{B,0}]
+\ol{m^{I,0}}\partial_z^2 c^{B,2}
+\ol{m^{I,1}}\partial_z^2 c^{B,1}\\
&+\ol{m^{I,2}}\partial_z^2 c^{B,0}
+m^{B,0}\ol{\partial_y^2 c^{I,0}}
+z[\ol{\partial_y u^{I,0}_2}\partial_z m^{B,0}+\ol{\partial_y^2 m^{I,0}}\partial_z c^{B,0}+\partial_z m^{B,0}\ol{\partial_y^2 c^{I,0}}]\\
&+z[\ol{\partial_y m^{I,0}}\partial^2_z c^{B,1}
+\ol{\partial_y m^{I,1}}\partial^2_z c^{B,0}]
+\frac{z^2}{2}\ol{\partial_y^2 m^{I,0}}\partial^2_z c^{B,0}-\partial^2_x m^{B,0}-\partial^2_z m^{B,2},
\\
F^{1}=&m^{B,1}_t
+\ol{u^{I,0}_1}\partial_x m^{B,1}
+\ol{u^{I,1}_1}\partial_x m^{B,0}
+\ol{u^{I,0}_2}\partial_z m^{B,2}
+\ol{u^{I,1}_2}\partial_z m^{B,1}
+\ol{u^{I,2}_2}\partial_z m^{B,0}+\partial_x(m^{B,0}\partial_x c^{B,1})\\
&+\partial_x[m^{B,1}\ol{\partial_xc^{I,0}}
+m^{B,1}\partial_x c^{B,0}
+\ol{m^{I,0}}\partial_x c^{B,1}+m^{B,0}\ol{\partial_x c^{I,1}}
+\ol{m^{I,1}}\partial_x c^{B,0}]
+\ol{\partial_ym^{I,0}}\partial_zc^{B,2}\\
&+\ol{\partial_ym^{I,1}}\partial_zc^{B,1}
+\ol{\partial_ym^{I,2}}\partial_zc^{B,0}
+\partial_zm^{B,2}\ol{\partial_yc^{I,0}}
+\partial_zm^{B,1}\ol{\partial_yc^{I,1}}
+\partial_zm^{B,0}\ol{\partial_yc^{I,2}}
\\
&
+\partial_z[m^{B,0}\partial_zc^{B,3}
+m^{B,1}\partial_zc^{B,2}+m^{B,2}\partial_zc^{B,1}
+m^{B,3}\partial_zc^{B,0}]
+m^{B,0}\ol{\partial_y^2 c^{I,1}}
+m^{B,1}\ol{\partial_y^2 c^{I,0}}\\
&+\ol{m^{I,0}}\partial^2_zc^{B,3}
+\ol{m^{I,1}}\partial^2_zc^{B,2}
+\ol{m^{I,2}}\partial^2_zc^{B,1}
+\ol{m^{I,3}}\partial^2_zc^{B,0}
+z\ol{\partial_y u^{\,I,0}_1}\partial_x m^{B,0}
+z
\ol{\partial_y u^{I,0}_2}\partial_z m^{B,1}\\
&+z[\ol{\partial_y u^{I,1}_2}\partial_z m^{B,0}
+\ol{\partial_y^2 m^{I,0}}\partial_z c^{B,1}
+\ol{\partial_y^2 m^{I,1}}\partial_z c^{B,0}
+\partial_zm^{B,0}\ol{\partial_y^2 c^{I,1}}
+\partial_zm^{B,1}\ol{\partial_y^2 c^{I,0}}]\\
&+z\partial_x[\ol{\partial_y m^{I,0}}\partial_x c^{B,0}
+m^{B,0}\ol{\partial_y \partial_x c^{I,0}}]
+z[\ol{\partial_y m^{I,0}}\partial_z^2 c^{B,2}+
\ol{\partial_y m^{I,1}}\partial_z^2 c^{B,1}+\ol{\partial_ym^{I,2}}\partial_z^2 c^{B,0}]\\
&
+zm^{B,0}\ol{\partial_y^3 c^{I,0}}+\frac{z^2}{2}[\ol{\partial_y^2 u^{I,0}_2}\partial_z m^{B,0}+\ol{\partial_y^3 m^{I,0}}\partial_z c^{B,0}+\partial_z m^{B,0}\ol{\partial_y^3 c^{I,0}}+\ol{\partial_y^2 m^{I,0}}\partial^2_z c^{B,1}]\\
&+\frac{z^2}{2}\ol{\partial_y^2 m^{I,1}}\partial^2_z c^{B,0}
+\frac{z^3}{3!}\ol{\partial_y^3 m^{I,0}}\partial^2_z c^{B,0}
-\partial^2_xm^{B,1}
-\partial^2_zm^{B,3},\\
\cdots&\,\cdots
\endaligned\right.
\enn
with $F^{j}=0$ for $j\geq -2.$\\
\textbf{Step 4. Equations for $c^{I,j}$ and $c^{B,j}$.} We omit the boundary layer profiles and insert \eqref{b2} into the second equation of \eqref{e1} to have
\be\label{b17}
c^{I,0}_t+\vec{u}^{\,I,0}\cdot\nabla c^{I,0}
+m^{I,0}c^{I,0}=0,
\ee
\be\label{b24}
c^{I,1}_t+\vec{u}^{\,I,0}\cdot\nabla c^{I,1}
+\vec{u}^{\,I,1}\cdot\nabla c^{I,0}
+m^{I,0}c^{I,1}+m^{I,1}c^{I,0}=0
\ee
and
\be\label{b18}
c^{I,j}_t+\sum_{l=0}^j\vec{u}^{\,I,l}\cdot\nabla c^{I,j-l}+\sum_{l=0}^j m^{I,l}c^{I,j-l}=\Delta c^{I,j-2}
\ee
with $j\geq2$. Plugging \eqref{b2} into the second equation of \eqref{e1} and subtracting \eqref{b17}-\eqref{b18} from the resulting equation then applying the Taylor expansion to $(m^{I,j},c^{I,j},\vec{u}^{\,I,j})(x,\va^{1/2}z,t)$ in $\va$ and using \eqref{b13}, we obtain
\be\label{b19}
\sum_{j= -2}^\infty\va^{j/2}H^{j}(x,z,t)=0,
\ee
where
\ben
\left\{\aligned
H^{-1}=&\ol{u^{I,0}_2}\partial_zc^{B,0},\\
H^{0}=&c^{B,0}_t
+\ol{u^{I,0}_1}\partial_xc^{B,0}
+\ol{u^{I,0}_2}\partial_zc^{B,1}+\ol{u^{I,1}_2}\partial_zc^{B,0}
+\ol{m^{I,0}}c^{B,0}
+m^{B,0}[\ol{c^{I,0}}+c^{B,0}]+z \ol{\partial_yu^{I,0}_2}\partial_zc^{B,0}\\
&-\partial^2_z c^{B,0},\\
H^1=&c^{B,1}_t
+\ol{u^{I,0}_1}\partial_xc^{B,1}+\ol{u^{I,1}_1}\partial_xc^{B,0}
+\ol{u^{I,0}_2}\partial_zc^{B,2}+\ol{u^{I,1}_2}\partial_zc^{B,1}
+\ol{u^{I,2}_2}\partial_zc^{B,0}
+\ol{m^{I,0}}c^{B,1}+\ol{m^{I,1}}c^{B,0}\\
&+m^{B,0}[\ol{c^{I,1}}+c^{B,1}]
+m^{B,1}[\ol{c^{I,0}}+c^{B,0}]
+z[\ol{\partial_y {u^{I,0}_1}}\partial_x c^{B,0}+\ol{\pt_y u^{I,0}_2}\pt_z c^{B,1}
+\ol{\pt_y u^{I,1}_2}\pt_z c^{B,0}]\\
&+z [\ol{\pt_ym^{I,0}}c^{B,0}+m^{B,0}\ol{\pt_y c^{I,0}}]+\frac{z^2}{2}\ol{\partial_y^2 u^{I,0}_2}\pt_z c^{B,0}
-\pt^2_z c^{B,1},\\
H^2=& c^{B,2}_t
+\ol{u^{I,0}_1}\pt_x c^{B,2}+\ol{u^{I,1}_1}\pt_x c^{B,1}
+\ol{u^{I,2}_1}\pt_x c^{B,0}+\ol{u^{I,0}_2}\pt_z c^{B,3}
+\ol{u^{I,1}_2}\pt_z c^{B,2}
+\ol{u^{I,2}_2}\pt_z c^{B,1}+\ol{u^{I,3}_2}\pt_z c^{B,0}
\\
&+\ol{m^{I,0}}c^{B,2}+\ol{m^{I,1}}c^{B,1}+\ol{m^{I,2}}c^{B,0}+m^{B,0}[\ol{c^{I,2}}+c^{B,2}]
+m^{B,1}[\ol{c^{I,1}}+c^{B,1}]+m^{B,2}[\ol{c^{I,0}}+c^{B,0}]
\\
&+z[\ol{\pt_yu^{I,0}_1}\partial_xc^{B,1}+\ol{\pt_yu^{I,1}_1}\partial_xc^{B,0}
+\ol{\pt_yu^{I,0}_2}\partial_zc^{B,2}+\ol{\pt_yu^{I,1}_2}\partial_zc^{B,1}
+\ol{\pt_yu^{I,2}_2}\partial_zc^{B,0}+\ol{\pt_ym^{I,0}}c^{B,1}]\\
&+z[
\ol{\pt_ym^{I,1}}c^{B,0}+m^{B,0}\ol{\pt_yc^{I,1}}+m^{B,1}\ol{\pt_yc^{I,0}}]
+\frac{z^2}{2}[\ol{\partial_y^2 {u^{I,0}_1}}\partial_x c^{B,0}+\ol{\pt_y^2 u^{I,0}_2}\pt_z c^{B,1}
+\ol{\pt_y^2 u^{I,1}_2}\pt_z c^{B,0}]\\
&+\frac{z^2}{2}[\ol{\pt_y^2m^{I,0}}c^{B,0}+m^{B,0}\ol{\pt_y^2 c^{I,0}}]
+\frac{z^3}{3!}\ol{\partial_y^3 u^{I,0}_2}\pt_z c^{B,0}
-\pt^2_xc^{B,0}-\pt^2_zc^{B,2},
\\
\cdots&\,\cdots
\endaligned\right.
\enn
 with $H^{j}=0$ for $j\geq -1$.
\\
\textbf{Step 5. Derivation of \eqref{e3}-\eqref{e18}.}
The first equality in \eqref{b16}, along with assumption (H) leads to
 \ben
 \ol{m^{I,0}}\partial_z c^{B,0}+m^{B,0}\partial_z c^{B,0}
-\partial_z m^{B,0}=0\qquad \text{for}\ \ (x,z,t)\in \mathbb{R}^2_{+}\times(0,\infty),
\enn
 which, in conjunction with assumption (H) further gives
 \be\label{b31}
 m^{B,0}=\ol{m^{I,0}}(e^{c^{B,0}}-1)\qquad \text{for}\ \ (x,z,t)\in \mathbb{R}^2_{+}\times(0,\infty).
 \ee
 The second identity of \eqref{b19}, \eqref{b13} and \eqref{b7} gives
 \ben
 c^{B,0}_t+z \ol{\partial_y u^{I,0}_2} \partial_z c^{B,0}+\ol{m^{I,0}}c^{B,0}
+m^{B,0}[\ol{c^{I,0}}+c^{B,0}]=\partial_z^2 c^{B,0}.
\enn
Inserting \eqref{b31} into the above equation and using \eqref{b3} and \eqref{b6}, we have
\ben
\left\{
\begin{array}{lll}
c^{B,0}_t+z \ol{\partial_y u^{I,0}_2} \partial_z c^{B,0}+\ol{m^{I,0}}c^{B,0}
+\ol{m^{I,0}}(e^{c^{B,0}}-1)[\ol{c^{I,0}}+c^{B,0}]=\partial_z^2 c^{B,0},\ \  (x,z,t)\in \mathbb{R}^2_{+}\times(0,\infty),\\
c^{B,0}(x,z,0)=0,\\
\partial_z c^{B,0}(x,0,t)=0.
\end{array}
\right.
\enn
By the uniqueness of solutions, we deduce from the above initial-boundary value problem that
\be\label{b20}
c^{B,0}(x,z,t)=0\qquad \text{for}\ \ (x,z,t)\in \mathbb{R}^2_{+}\times(0,\infty).
\ee
 Inserting \eqref{b20} into \eqref{b31}, we obtain
 \be\label{b21}
m^{B,0}(x,z,t)=0\qquad \text{for}\ \ (x,z,t)\in \mathbb{R}^2_{+}\times(0,\infty).
\ee
By \eqref{b7}, \eqref{b13}, \eqref{b20} and \eqref{b21}, we deduce from the second equality of \eqref{b16} that
\be\label{b29}
\pt_z^2 m^{B,1}=\ol{m^{I,0}}\pt_z^2 c^{B,1}.
\ee
Integrating the above equality over $(z,\infty)$ and using assumption (H) to get
\be\label{b33}
\pt_z m^{B,1}=\ol{m^{I,0}} \pt_zc^{B,1} \qquad \text{for}\ \ (x,z,t)\in \mathbb{R}^2_{+}\times(0,\infty),
\ee
which, along with \eqref{b5} with $k=0$, \eqref{b20} and \eqref{b21} leads to
\be\label{b23}
\pt_y m^{I,0}(x,0,t)= m^{I,0}(x,0,t)\pt_y c^{I,0}(x,0,t).
\ee
 Then \eqref{e3} follows from \eqref{b8}, \eqref{b10}, \eqref{b15}, \eqref{b17}, \eqref{b13}, \eqref{b3}, \eqref{b7} and \eqref{b23}. \eqref{e4} follows from \eqref{b13}, \eqref{b14}, \eqref{b20} and \eqref{b21}.

By \eqref{b13}, \eqref{b20} and \eqref{b21}, we deduce from the third equality of \eqref{b16} that
\be\label{b30}
 \begin{split}
 \pt_z^2 m^{B,2}=\ol{m^{I,0}}\pt_z^2c^{B,2}+\ol{m^{I,1}}\pt_z^2c^{B,1}
 +\pt_zm^{B,1}\ol{\pt_yc^{I,0}}+\pt_z(m^{B,1}\pt_zc^{B,1})
 +\pt_z (z\ol{\pt_ym^{I,0}}\pt_z c^{B,1}),
 \end{split}
 \ee
 which, along with integration over $(z,\infty)$ and assumption (H) gives
 \be\label{b25}
 \pt_z m^{B,2}=\ol{m^{I,0}}\pt_zc^{B,2}+\ol{m^{I,1}}\pt_zc^{B,1}
 +m^{B,1}\ol{\pt_yc^{I,0}}+m^{B,1}\pt_zc^{B,1}
 +z\ol{\pt_ym^{I,0}}\pt_z c^{B,1}.
 \ee
 From \eqref{b5} with $k=1$, \eqref{b20} and \eqref{b21}, we deduce that
 \be\label{b26}
 \begin{split}
 \pt_z m^{B,2}(x,0,t)=&-\ol{\pt_ym^{I,1}}+\ol{m^{I,0}}\ol{\pt_ym^{I,1}}
 +\ol{m^{I,1}}\ol{\pt_ym^{I,0}}+\ol{m^{I,0}}\pt_zc^{B,2}(x,0,t)\\
 &+\ol{m^{I,1}}\pt_zc^{B,1}(x,0,t)
 +m^{B,1}(x,0,t)\ol{\pt_yc^{I,0}}+m^{B,1}(x,0,t)\pt_zc^{B,1}(x,0,t).
 \end{split}
 \ee
 Setting $z=0$ in \eqref{b25} and using \eqref{b26}, one immediately gets
 \be\label{b27}
 0=\ol{\pt_ym^{I,1}}-\ol{m^{I,0}}\ol{\pt_ym^{I,1}}
 -\ol{m^{I,1}}\ol{\pt_ym^{I,0}}.
 \ee
  Then \eqref{e5} follows from \eqref{b8}, \eqref{b10}, \eqref{b15}, \eqref{b24}, \eqref{b4}, \eqref{b7}, \eqref{b13} and \eqref{b27}. 
  Integrating \eqref{b33} over $(z,\infty)$ and using assumption (H) to get
\be\label{b22}
m^{B,1}=\ol{m^{I,0}} c^{B,1} \qquad \text{for}\ \ (x,z,t)\in \mathbb{R}^2_{+}\times(0,\infty).
\ee 
 Then 
\eqref{e9}-\eqref{e11} follow from \eqref{b4}, \eqref{b6}, \eqref{b13}, \eqref{b14}, \eqref{b20}, \eqref{b21}, \eqref{b22} and the third equality in \eqref{b19}.

Integrating \eqref{b25} over $(z,\infty)$ and using assumption (H) and \eqref{e6}, one gets \eqref{e13}. Inserting \eqref{e13} into the fourth equality of \eqref{b19} and using \eqref{b20}, \eqref{b21}, \eqref{b13}, \eqref{b7}, and \eqref{e6} we derive \eqref{e12}. Omitting the terms containing $m^{I,2}$, $c^{I,2}$, $\vec{u}^{\,I,2}$ or $c^{B,3}$ or their derivatives in the fourth equality of \eqref{b16} and replacing $\partial^2_zm^{B,3}$ with $\pt^2_z\xi$ and using \eqref{b20}, \eqref{b21} and \eqref{e6}, yields
\be\label{b28}
\begin{split}
\partial^2_z\xi=&m^{B,1}_t
+\pt_x[\partial_xm^{B,1}\ol{\partial_xc^{I,0}}
+\partial_x\ol{m^{I,0}}\partial_x c^{B,1}]
+\frac{z^2}{2}\ol{\partial^2_ym^{I,0}}
\partial^2_zc^{B,1}
+\ol{\partial_ym^{I,0}}\partial_zc^{B,2}\\
&
+m^{B,1}\ol{\partial^2_yc^{I,0}}
+z[\ol{\partial_y u^{I,0}_2}\partial_z m^{B,1}
+\ol{\partial_ym^{I,0}}\partial_z^2c^{B,2}
]
+\partial_zm^{B,2}\ol{\partial_yc^{I,0}}
\\
&
+\partial_z[
m^{B,1}\partial_zc^{B,2}+m^{B,2}\partial_zc^{B,1}
]+z[
\partial_zm^{B,1}\overline{\partial^2_yc^{I,0}}+\overline{\partial^2_ym^{I,0}}\partial_zc^{B,1}]-\partial^2_xm^{B,1}
.
\end{split}
\ee
Integrating the above equality twice over $(z,\infty)$ and using assumption (H) yield \eqref{e18}. 
~\\
~\\
\noindent \textbf{Acknowledgement}. This work is supported by National Natural Science Foundation of China (No. 12471195) and Heilongjiang Provincial Natural Science Foundation of China (No. YQ2024A001).

\bibliography{rf}
\bibliographystyle{plain}

\end{document}